\newtheorem{theorem}{Theorem}[chapter]
\newtheorem{lemma}[theorem]{Lemma}
\newtheorem{proposition}[theorem]{Proposition}
\newtheorem{corollary}[theorem]{Corollary}
\newenvironment{proof}[1][Proof]{\begin{trivlist}
\item[\hskip \labelsep {\bfseries #1}]}{\end{trivlist}}
\newenvironment{poof}[1][``Proof'']{\begin{trivlist}
\item[\hskip \labelsep {\bfseries #1}]}{\end{trivlist}}
\newenvironment{definition}[1][Definition]{\begin{trivlist}
\item[\hskip \labelsep {\bfseries #1}]}{\end{trivlist}}
\newenvironment{problem}[1][Problem]{\begin{trivlist}
\item[\hskip \labelsep {\bfseries #1}]}{\end{trivlist}}
\newenvironment{solution}[1][Solution]{\begin{trivlist}
\item[\hskip \labelsep {\bfseries #1}]}{\end{trivlist}}
\newenvironment{remark}[1][Remark]{\begin{trivlist}
\item[\hskip \labelsep {\bfseries #1}]}{\end{trivlist}}
\newcommand{\qed}{\hfill \rule{1.5ex}{1.5ex}}
\renewcommand{\chaptermark}[1]%
        {\markboth{\textsc{\thechapter.\ #1}}{}}
\renewcommand{\sectionmark}[1]%
       {\markright{{\thesection.\ #1}}}
\renewcommand{\headrule}{\vbox to 0pt{\vspace{-3mm}\hbox to \headwidth{\color{red}\dotfill}\vss}}
\newcommand{\CleanPage}{\clearpage{\pagestyle{empty}\cleardoublepage}}
\let\omarginpar\marginpar
\def\marginpar#1{\omarginpar{\footnotesize \raggedright \color{green}#1}}
\begin{document}
\frontmatter

\begin{titlepage}

\begin{center}
       \colorbox{red}{\textcolor{white}{
       \vbox{
       \vspace{0.3cm}
       {\Large SPHERICAL HARMONICS IN $p$ DIMENSIONS} 
        \vspace{0.3cm}
            }}}
\end{center}

\vspace{1cm}

\begin{center}
\textbf{\textcolor{red}{CHRISTOPHER R. FRYE} \&  \textcolor{red}{COSTAS EFTHIMIOU}}\\
Department of Physics\\
UNIVERSITY OF CENTRAL FLORIDA
\end{center}

 \vfill
 \rightline{\textcolor{red}{VERSION: 1.0}}
 \rightline{May  10, 2012}

\newpage
\thispagestyle{empty}

\ \vfill
\begin{center}
\copyright All rights reserved.
\end{center}

Please contact Christopher Frye at \texttt{christopher.frye@knights.ucf.edu} with any comments or corrections.

\ \vfill
This book was typeset in \TeX\ using the \LaTeXe\  Document Preparation System.

\end{titlepage}

\tableofcontents
\thispagestyle{empty}

\chapter*{Preface}
\addcontentsline{toc}{chapter}{Preface}

The authors prepared the following booklet in order to make several useful topics from the theory of special functions, in particular
the spherical harmonics and Legendre polynomials of $\mathbb{R}^p$, available to undergraduates studying
physics or mathematics. With this audience in mind, nearly all details of the calculations and proofs are written
out, and extensive background material is covered before beginning the main subject matter. The reader is assumed to have knowledge of
multivariable calculus and linear algebra (especially inner product spaces) as well as some level of comfort with reading proofs.

Literature in this area is scant, and for the undergraduate it is virtually nonexistent. To find
the development of the spherical harmonics that arise in $\mathbb{R}^3$, physics students can look in almost any text on mathematical methods,
electrodynamics, or quantum mechanics (see \cite{Methods:Arfken}, \cite{Methods:ByronFuller}, \cite{ED:Jackson}, \cite{QM:Landau},
\cite{QM:Shankar}, for example), and math students can search any book on boundary value problems, PDEs,
or special functions (see  \cite{BVP:Folland}, \cite{PDE:Strauss}, for example). However, the
undergraduate will have a very difficult time finding accessible material on the corresponding topics in arbitrary $\mathbb{R}^p$.

The authors used Hochstadt's \emph{The Functions of Mathematical Physics} \cite{SpecFun:Hochstadt} as a primary reference (which is, unfortunately,
out of print). If the reader seeks a much more concise treatment of spherical harmonics in an arbitrary number of dimensions written at a higher level,
\cite{SpecFun:Hochstadt} is recommended. Much of the theory developed below can be found there.

\newpage
\thispagestyle{empty}


\chapter*{Acknowledgements}
\addcontentsline{toc}{chapter}{Acknowledgements}

The first author worked on this booklet as an undergraduate and is very grateful
to the second author for his guidance and assistance during this project. The first author
would also like to thank Professors Maxim Zinchenko and Alexander Katsevich for their valuable
comments and suggestions on the presentation of the ideas.
In addition, he thanks his friends Jie Liang, Byron Conley, and Brent Perreault
for their feedback after proofreading portions of this manuscript.

This work has been supported in part by NSF Award DUE 0963146 as well as UCF SMART and RAMP Awards.
The first author is grateful to the Burnett Honors College (BHC), the Office of Undergraduate Research (OUR), and the Research and Mentoring Program (RAMP) at UCF for their generous support during his studies. He would like to thank especially Alvin Wang, Dean of BHC;  Martin  Dupuis, Assistant Dean of BHC;
Kim Schneider, Director of OUR; and Michael Aldarondo-Jeffries, Director of RAMP. Last but not least, he thanks Paul Steidle for providing him a quiet office to work in while writing.

\newpage
\thispagestyle{empty}

\mainmatter

\chapter{Introduction and Motivation}
\label{ch:intro}

Many important equations in physics involve the \emph{Laplace
operator}, which is given by
\begin{align}
  \Delta_2 &= {\partial^2 \over \partial x^2} + {\partial^2 \over
  \partial y^2}, \label{eq:LapOp2}\\
  \Delta_3 &= {\partial^2 \over \partial x^2} + {\partial^2 \over
  \partial y^2} + {\partial^2 \over \partial z^2},
  \label{eq:LapOp3}
\end{align}
in two and three dimensions\footnote{We may drop the subscript if we want to keep the number of dimensions arbitrary.}, respectively.
We will see later (Proposition \ref{prop:invlap}) that the Laplace operator is invariant under a rotation of the coordinate system.
Thus, it arises in many physical situations in which there exists spherical
symmetry, i.e., where physical quantities depend only on the radial distance $r$ from some center of symmetry $\mathcal{O}$.
For example, the electric potential $V$ in free space is found by solving the \emph{Laplace equation},
\begin{equation}
    \Delta\Phi = 0,
\label{pr:Lap}
\end{equation}
which is rotationally invariant. Also, in quantum mechanics, the
wave function $\psi$ of a particle in a central field can be
found by solving the \emph{time-independent Schr$\ddot{o}$dinger
equation},
\begin{equation}
    \left[ -{\hbar^2 \over 2m} \Delta + V(r) \right] \psi = E \psi
    \label{pr:Sch}
\end{equation}
where $\hbar$ is Planck's constant, $m$ is the mass of the
particle, $V(r)$ is its potential energy, and $E$ is its total
energy.

We will give a brief introduction to these problems in two and
three dimensions to motivate the main subject of this discussion.
In doing this, we will get a preview of some of the properties
of \emph{spherical harmonics} --- which, for now, we can just think of as some
special set of functions --- that we will develop later in the general setting of $\mathbb{R}^p$.

\section{Separation of Variables} \label{sec:SepVar}

\subsection*{Two-Dimensional Case}

Since we are interested in problems with spherical symmetry, let
us rewrite the Laplace operator in spherical coordinates, which in
$\mathbb{R}^2$ are just the ordinary polar coordinates\footnote{The polar angle $\phi$ actually requires a more elaborate definition, since $\tan^{-1}$ only produces angles in the first and fourth quadrants. However, this detail will not concern us here.},
\begin{equation}
    r = \sqrt{x^2 + y^2}, \quad \phi = \tan^{-1} \left(\frac{y}{x}\right).
    \label{eq:polar}
\end{equation}
Alternatively,
\[
    x = r \cos{\phi}, \quad y = r \sin{\phi}.
\]
Using the chain rule, we can rewrite the Laplace operator as
\begin{equation}
    \Delta_2 = {\partial^2 \over \partial r^2} + {1 \over
    r}{\partial \over \partial r} + {1 \over r^2}{\partial^2 \over
    \partial \phi^2}.
    \label{eq:SphLap2}
\end{equation}
In checking this result, perhaps it is easiest to begin with \eqref{eq:SphLap2} and recover \eqref{eq:LapOp2}.
First we compute
\begin{equation}
    \label{eq:chain}
    {\partial \over \partial r} = {\partial x \over \partial r}{\partial \over \partial x} + {\partial y \over \partial r}{\partial \over \partial y}
    = \cos\phi {\partial \over \partial x} + \sin\phi {\partial \over \partial y},
\end{equation}
which implies that
\[
    {\partial^2 \over \partial r^2} = \cos^2\phi {\partial^2 \over \partial x^2} +2\sin\phi\cos\phi{\partial^2 \over \partial x \partial y} +
    \sin^2\phi {\partial^2 \over \partial y^2},
\]
and
\[
    {\partial \over \partial \phi} = {\partial x \over \partial \phi}{\partial \over \partial x} + {\partial y \over \partial \phi}{\partial \over \partial y}
    = -r \sin\phi {\partial \over \partial x} + r \cos\phi {\partial \over \partial y},
\]
which gives
\[
    {\partial^2 \over \partial \phi^2} = r^2 \sin^2\phi {\partial^2 \over \partial x^2} -2r^2\sin\phi\cos\phi{\partial^2 \over \partial x \partial y}
    +r^2\cos^2\phi{\partial^2 \over \partial y^2}.
\]
Inserting these into \eqref{eq:SphLap2} gives us back \eqref{eq:LapOp2}.

Thus, \eqref{pr:Lap} becomes
\[
    {\partial^2 \Phi \over \partial r^2} + {1 \over
    r}{\partial \Phi \over \partial r} + {1 \over r^2}{\partial^2 \Phi \over
    \partial \phi^2} = 0.
\]
To solve this equation it is standard to assume that
$\Phi(r,\phi) = \chi(r)Y(\phi)$,
where $\chi(r)$ is a function of $r$ alone and $Y(\phi)$ is a function of $\phi$ alone. Then
\[
    Y \, {d^2 \chi \over d r^2} + {Y \over
    r} \, {d \chi \over d r} + {
    \chi \over r^2} \, {d^2 Y \over
    d \phi^2} = 0.
\]
Multiplying by $r^2 / \chi Y$ and rearranging,
\[
    {r^2 \over \chi} \, {d^2 \chi \over d r^2} + {r \over
    \chi} \, {d \chi \over d r} = {-1 \over Y} \, {d^2 Y \over
    d \phi^2}.
\]
We see that a function of $r$ alone (the left side) is equal to a
function of $\phi$ alone (the right side). Since we can vary $r$
without changing $\phi$, i.e., without changing the right
side of the above equation, it must be that the left side of
the above equation does not vary with $r$ either. This means the left
side of the above equation is not really a function of $r$ but a
constant. As a consequence, the right side is the same
constant. Thus, for some $-\lambda$ we can write
\begin{equation}
    {-1 \over Y} \, {d^2 Y \over
    d \phi^2} = - \lambda = {r^2 \over \chi} \, {d^2 \chi \over d r^2} + {r \over
    \chi} \, {d \chi \over d r}.
    \label{eq:SepVar2}
\end{equation}
We solve $Y'' = \lambda Y$ to get the linearly independent
solutions
\[
    Y(\phi) = \left\{
        \begin{array}{lr}
            e^{\sqrt{\lambda} \phi}, \: e^{- \sqrt{\lambda} \phi} & \text{if } \lambda >
            0,\\
            1, \: \phi & \text{if } \lambda = 0,\\
            \sin{\left(\sqrt{|\lambda|} \phi\right)}, \: \cos{\left(\sqrt{|\lambda|}
            \phi\right)} & \text{if } \lambda < 0,
        \end{array}
    \right.
\]
but we must reject some of these solutions. Since $(r_0, \phi_0)$ represents the same point as $(r_0, \phi_0 + 2\pi k)$ for any $k
\in \mathbb{Z}$, we require $Y(\phi)$ to have period $2\pi$. Thus, we can only accept the linearly independent periodic solutions $1$,
$\sin{\left( \sqrt{|\lambda|} \phi \right)}$, and $\cos{\left( \sqrt{|\lambda|} \phi \right)}$, where $\sqrt{|\lambda|}$ must be an
integer. Then, let us replace $\lambda$ with $-m^2$ and write our linearly independent solutions to $Y'' = -m^2 Y$ as
\begin{equation}
    Y_{1,n} = \cos{(n\phi)}, \qquad Y_{2,m} = \sin{(m\phi)},
    \label{eq:SphHar2}
\end{equation}
where\footnote{We use the notation $\mathbb{N}=\{1,2,\dots\}$ and $\mathbb{N}_0=\{0,1,2,\dots\}$.}
$ n\in \mathbb{N}_0$ and $m\in \mathbb{N}$.
Notice from \eqref{eq:SphLap2} that $\partial^2 / \partial \phi^2$
is the angular part of the Laplace operator in two dimensions and
that the solutions given in \eqref{eq:SphHar2} are eigenfunctions
of the $\partial^2 / \partial \phi^2$ operator. We will see in Chapter \ref{ch:sph}
that the functions in \eqref{eq:SphHar2} are actually spherical harmonics;
however, since we have not yet given a definition of a spherical harmonic,
for now we will just refer to these as functions $Y$. The reader should keep in mind
that characteristics of the $Y$'s we comment on here will generalize when we move to $\mathbb{R}^p$.

We can also solve easily for the functions $\chi(r)$ that satisfy
\eqref{eq:SepVar2}, but this does not concern us here. Let us
instead notice a few properties of the functions $Y$. Let us consider these to be
functions $r^m \sin{\left( m\phi \right)}$, $r^n \cos{\left( n\phi
\right)}$ on $\mathbb{R}^2$ that have been restricted to the unit
circle, where $r=1$, and let us analyze the extended functions on
$\mathbb{R}^2$. We will first rewrite them using \emph{Euler's
formula}, $e^{i\phi} = \cos{\phi}+i\sin{\phi}$, which implies
\begin{align*}
  (x+iy)^n &= \left(re^{i\phi}\right)^n \, \, = r^ne^{in\phi} \; = r^n \left[
  \cos{(n\phi)} + i\sin{(n\phi)} \right],\\
  (x-iy)^n &= \left(re^{-i\phi}\right)^n = r^ne^{-in\phi} = r^n \left[
  \cos{(n\phi)} - i\sin{(n\phi)} \right],
\end{align*}
so that
\begin{align*}
  r^n\cos{(n\phi)} &= {1 \over 2} \left[ (x+iy)^n+(x-iy)^n \right] \, \overset{\text{def}}{=} H_{1,n}(x,y),\\
  r^n\sin{(n\phi)} &= {1 \over 2i} \left[ (x+iy)^n-(x-iy)^n \right] \overset{\text{def}}{=} H_{2,n}(x,y).
\end{align*}
We notice that the $Y$'s can be written as
polynomials restricted to the unit circle, where $r=1$. Furthermore, observe that
\[
    H_{1,n}(tx,ty) = t^n H_{1,n}(x,y), \quad \text{and} \quad
    H_{2,n}(tx,ty) = t^n H_{2,n}(x,y);
\]
we call polynomials with this property \emph{homogeneous of
degree $n$}. Moreover, the reader can also check that these
polynomials satisfy the Laplace equation \eqref{pr:Lap}, by either
using \eqref{eq:LapOp2} or the Laplace operator in polar
coordinates \eqref{eq:SphLap2} for the computation, i.e.,
\[
    \Delta_2 \, H_{1,n} = 0, \quad \text{and} \quad \Delta_2 \, H_{2,n} = 0.
\]
Let us also notice that the $Y$'s of different degree
are \emph{orthogonal over the unit circle}, which means
\[ \begin{array}{rlc}
  \vspace{.25cm} \int_0^{2\pi} \sin{(n\phi)} \sin{(m\phi)} \,d\phi\!\! &= 0, &
  \text{ if } n \neq m, \\
  \vspace{.25cm} \int_0^{2\pi} \cos{(n\phi)} \cos{(m\phi)} \,d\phi\!\! &= 0, &
  \text{ if } n \neq m, \\
  \int_0^{2\pi} \sin{(n\phi)} \cos{(m\phi)} \,d\phi\!\! &= 0, &
  \text{ if } n \neq m,
\end{array} \]
as we can easily compute by taking advantage of Euler's
formula. For instance, we can calculate
\[
    \int_0^{2\pi} \sin(n\phi) \sin(m\phi) \, d\phi = \int_0^{2\pi} {e^{in\phi}-e^{-in\phi} \over 2}\cdot{e^{im\phi}-e^{-im\phi} \over 2} \, d\phi
\]
which becomes
\[
    \int_0^{2\pi} \left({e^{i(m+n)\phi}-e^{-i(m+n)\phi} \over 2}-{e^{i(m-n)\phi}+e^{-i(m-n)\phi} \over 2}\right) \, d\phi
\]
or
\[
    \int_0^{2\pi} \Big( \sin[(m+n)\phi] - \cos[(m-n)\phi] \Big) \, d\phi = 0,
\]
for $n \neq m$. The reader can check the rest in a similar fashion.

Finally, by recalling the theorems of Fourier analysis\footnote{Doing so will not be
necessary to understand the material we present here, but the reader unfamiliar with
Fourier analysis may choose to consult \cite{BVP:Folland}.}, we know
that any ``reasonable'' function defined on the unit circle can be
expanded in a \emph{Fourier series}. That is, given a function $f:
[0,2\pi) \rightarrow \mathbb{R}$ satisfying certain conditions
(that do not concern us in this introduction), we can write
\[
    f(\phi) = \sum_{m=1}^\infty a_m \sin{(m\phi)} +
    \sum_{n=0}^\infty b_n \cos{(n\phi)}
\]
for some constants $a_m, b_n$. We say that the $Y$'s make up a \emph{complete set} of
functions over the unit circle, since we can expand any nice function $f(\phi)$ defined on $[0,2\pi)$ in terms of
them.

Before we move on, we will show that \eqref{pr:Sch} can be
approached using a method almost identical to the method we used
above. Let us assume that the solution $\psi$ to \eqref{pr:Sch}
can be written as $\psi(r, \phi) = \chi(r) Y(\phi)$. Then, using
polar coordinates, the equation becomes
\[
    Y \, \left( -{\hbar^2 \over 2m} {d^2  \over d r^2} - {\hbar^2 \over 2mr} \, {d \over d r} + V(r) \right)\chi
    + {\chi \over r^2} \, \left( -{\hbar^2 \over 2m} {d^2  \over d \phi^2}\right)Y = E \chi Y.
\]
Multiplying by $r^2 / \chi Y$ and rearranging,
\[
    {1 \over \chi} \left( -{r^2\hbar^2 \over 2m} {d^2 \over dr^2} -{r\hbar^2 \over 2m} {d \over dr} + r^2V(r)
    \right)\chi - Er^2 = {1 \over Y} \left({\hbar^2 \over 2m} {d^2 \over
    d\phi^2}\right)Y.
\]
Once again, we see that a function of $r$ alone is equal to a
function of $\phi$ alone, and we conclude that both sides of the
above equation must be equal to the same constant. Using the same
reasoning as before, we write this constant as $-\ell^2\hbar^2 /
2m$, where $\ell$ is an integer. If we carry out the calculation, we will see that the functions
$Y(\phi)$ are the same ones we found previously in this subsection. We will also find the radial equation
\[
    {1 \over \chi} \left( -{r^2\hbar^2 \over 2m} {d^2 \over dr^2} -{r\hbar^2 \over 2m}{d \over dr} + r^2V(r)
    \right)\chi - Er^2 = -{\ell^2\hbar^2 \over 2m},
\]
which we can rewrite as
\[
    \left\{ -{\hbar^2 \over 2m} \left( {d^2 \over dr^2}-{1 \over r}{d \over dr} \right) + \left[ V(r)+ {\ell^2\hbar^2 \over 2mr^2} \right]
    \right\}\chi = E \chi.
\]
It is interesting to note that this equation resembles
\eqref{pr:Sch}. If we think of $r$ as our single independent
variable and $\chi$ as our wave function, we have an effective
potential energy
\[
    V_{\text{eff}}(r) = V(r) + {\ell^2\hbar^2 \over 2mr^2},
\]
where we call the second term the \emph{centrifugal} term. Thus $\chi$ represents a fictitious particle that feels an effective forece
\[
    \vec{F}_\text{eff} = - \nabla V_\text{eff} = - {dV_\text{eff}
    \over dr} \, \hat{r}.
\]
We see that the centrifugal term contributes a force
\[
    \vec{F}_\text{centrifugal} = {\ell^2 \hbar^2 \over mr^3}
    \, \hat{r},
\]
pushing the particle away from the center of symmetry.

\subsection*{Three-Dimensional Case}

Here, we will follow a procedure almost identical to that of the last subsection, but we will not take the discussion as far in
three dimensions. The $Y$'s we will find in three dimensions are more widely used than those in any other number of dimensions;
however, a thorough development of the functions in $\mathbb{R}^3$ could take many pages, and this would
distract us from our goal of moving to $p$ dimensions. Furthermore, the results that we would find in three
dimensions are only special cases of more general theorems we will develop later in the discussion. If the reader is interested in
studying the usual spherical harmonics of $\mathbb{R}^3$ in depth, there are a multitude of sources we can recommend, including
\cite{Methods:Arfken}, \cite{Methods:ByronFuller}, \cite{BVP:Folland} and \cite{PDE:Strauss}.

Let us rewrite the Laplace operator in spherical coordinates,
which are given by
\begin{equation}
    r = \sqrt{x^2 + y^2 + z^2}, \quad \theta = \tan^{-1}\left(
    {\sqrt{x^2+y^2} \over z} \right), \quad \phi = \tan^{-1}\left( {y
    \over x} \right).
    \label{eq:SphCoor}
\end{equation}
Alternatively,
\[
    x = r \sin{\theta} \cos{\phi}, \quad y = r \sin{\theta}
    \sin{\phi}, \quad z = r \cos{\theta}.
\]
Using the chain rule, we find
\begin{equation}
    \Delta_3 = {1 \over r^2}{\partial \over \partial r} \left( r^2
    {\partial \over \partial r} \right) + {1 \over r^2} \left[ {1
    \over \sin{\theta}} {\partial \over \partial \theta} \left(
    \sin{\theta} {\partial \over \partial \theta} \right) + {1 \over
    \sin^2{\theta}} {\partial^2 \over \partial \phi^2} \right].
    \label{eq:SphLap3}
\end{equation}
As in the two-dimensional case, it is probably easiest to verify this formula
by starting with \eqref{eq:SphLap3} and producing \eqref{eq:LapOp3}.
The reader should check this result for practice, proceeding
exactly as we did beginning in \eqref{eq:chain}.

Inserting \eqref{eq:SphLap3} into \eqref{pr:Lap} gives
\[
    {1 \over r^2}{\partial  \over \partial r} \left( r^2
    {\partial \Phi \over \partial r} \right) + {1 \over r^2} \left[ {1
    \over \sin{\theta}} {\partial \over \partial \theta} \left(
    \sin{\theta} {\partial \Phi \over \partial \theta} \right) + {1 \over
    \sin^2{\theta}} {\partial^2 \Phi \over \partial \phi^2} \right] = 0.
\]
Searching for solutions $\Phi$ in the form $\Phi(r,\theta,\phi) = \chi(r)Y(\theta, \phi)$, where $\chi(r)$ is a function of $r$
alone and $Y(\theta,\phi)$ is a function of only $\theta$ and $\phi$, this becomes
\[
    Y \, {1 \over r^2}{d \over d r} \left( r^2
    {d \over d r} \right)\chi + {\chi \over r^2} \left[ {1
    \over \sin{\theta}} {\partial \over \partial \theta} \left(
    \sin{\theta} {\partial  \over \partial \theta} \right) + {1 \over
    \sin^2{\theta}} {\partial^2  \over \partial \phi^2} \right]Y = 0.
\]
Multiplying by $r^2 / \chi Y$ and rearranging,
\[
    {1 \over \chi} \, {d \over d r} \left( r^2
    {d \over d r} \right)\chi =  {-1 \over Y} \, \left[ {1
    \over \sin{\theta}} {\partial \over \partial \theta} \left(
    \sin{\theta} {\partial  \over \partial \theta} \right) + {1 \over
    \sin^2{\theta}} {\partial^2  \over \partial \phi^2} \right]Y.
\]
Since we have found that a function of $r$ alone is equal to a
function of only $\theta$ and $\phi$, we use the same reasoning as
in the previous subsection to conclude that both sides of the above
equation must be equal to the same constant, call it $-\lambda$.
This implies that
\begin{equation}
    \left[ {1
    \over \sin{\theta}} {\partial \over \partial \theta} \left(
    \sin{\theta} {\partial \over \partial \theta} \right) + {1 \over
    \sin^2{\theta}} {\partial^2 \over \partial \phi^2} \right]Y =
    \lambda Y.
    \label{eq:Yeig}
\end{equation}
At this point, we will stop. It turns out that the $Y$'s which satisfy this equation
are actually spherical harmonics. Comparing \eqref{eq:SphLap3} and \eqref{eq:Yeig},
we see that the $Y$'s are eigenfunctions of the angular part of the Laplace
operator, just as in two dimensions. If we studied the functions $Y$ further
we would also find that, analogously to what we noticed in $\mathbb{R}^2$,
the $Y$'s form a complete set over the unit sphere, each $Y$ is a homogeneous polynomial restricted to the unit sphere,
and these polynomials satisfy the Laplace equation. However, to
develop these results and the many others that exist would require
us to study Legendre's equation, Legendre polynomials, and
associated Legendre functions, and we will choose to leave such an
in-depth analysis to the general case of $p$ dimensions.

We will now move on to see how these functions $Y$ are related to
angular momentum in quantum mechanics.

\section{Quantum Mechanical Angular Momentum}

We have seen that spherical harmonics in two and three dimensions relate to the one-dimensional sphere (circle) and the
two-dimensional sphere (surface of a regular ball) respectively. In quantum mechanics, rotations of a system are generated by the angular momentum
operator. Spherical symmetry means invariance under all such rotations. Therefore, a relation between the theory of spherical harmonics and the
theory of angular momentum is not only expected but is a natural and fundamental result.

Recall that in classical mechanics, the angular momentum of a
particle is defined by the cross product
\[
    \vec{L} = \vec{r} \times \vec{p},
\]
where $\vec{p}$ is its linear momentum. To find the quantum
mechanical angular momentum operator, we make the substitution
$p_i \mapsto -i \hbar \, \partial / \partial x_i$ where $x_1 =
x$, $x_2 = y$, and $x_3 = z$. Thus, we see that\footnote{The hat above the angular momentum indicates that it is an operator (not a unit vector) in this section.}
\[
    \hat{\vec{L}} = -i\hbar \, \vec{r} \times \nabla,
\]
where
\[
    \nabla = \left( {\partial \over \partial x}, {\partial \over \partial y}, {\partial \over \partial
    z}
    \right).
\]

\subsection*{Two-Dimensional Case}

In the plane, the angular momentum operator has only one component, given by
\[
    \hat{L} = -i\hbar \left( x {\partial \over \partial y} - y {\partial \over \partial x}
    \right).
\]
Using the polar coordinates defined in \eqref{eq:polar} and the
chain rule, we can rewrite this as
\[
    \hat{L} = -i\hbar {\partial \over \partial \phi},
\]
as the reader should have no problem checking using the same strategy
s/he used to verify \eqref{eq:SphLap2} and \eqref{eq:SphLap3}.
Then
\[
    \hat{L}^2 = -\hbar^2 {\partial^2 \over \partial \phi^2},
\]
and we check using \eqref{eq:SphHar2} that the functions
$Y(\phi)$ are eigenfunctions of the $\hat{L}^2$ operator. In
particular,
\[
    \hat{L}^2 Y_{m,j}(\phi) = -\hbar^2 {\partial^2 \over \partial
    \phi^2} Y_{m,j} = \hbar^2m^2 Y_{m,j},
\]
and we see that the function $Y_{m,j}$ is associated
with the eigenvalue $\hbar^2m^2$.

In quantum mechanics, operators such as $\hat{L}$ represent
dynamical variables. If an operator $\hat{O}$ has eigenfunctions
$\psi_k$ with corresponding eigenvalues $\lambda_k$, then a
particle in state $\psi_k$ will be observed to have a value of
$\lambda_k$ for the dynamical variable $\hat{O}$. Therefore, we
see that a particle in the state $Y_{m,j}$ will be observed to
have a value of $\hbar^2m^2$ for its angular momentum squared. We
say the function $Y_{m,j}$ carries angular momentum
$\hbar m$.

\subsection*{Three-Dimensional Case} \label{subs:3dang}

Things work similarly in three dimensions, where the angular
momentum operator has the components
\begin{align*}
  \hat{L}_x &= -i\hbar \left( y {\partial \over \partial z} - z {\partial \over \partial y}
  \right),\\
  \hat{L}_y &= -i\hbar \left( z {\partial \over \partial x} - x {\partial \over \partial z}
  \right),\\
  \hat{L}_z &= -i\hbar \left( x {\partial \over \partial y} - y {\partial \over \partial x}
  \right).
\end{align*}

Using the spherical coordinates defined in \eqref{eq:SphCoor} and
the chain rule, we can rewrite these as
\begin{align*}
  \hat{L}_x &= i\hbar \left( \sin{\theta} {\partial \over \partial \theta} + \cos{\phi}\cot{\theta} {\partial \over \partial \phi}
  \right),\\
  \hat{L}_y &= -i\hbar \left( \cos{\theta} {\partial \over \partial \theta} + \sin{\phi}\cot{\theta} {\partial \over \partial \phi}
  \right),\\
  \hat{L}_z &= -i\hbar \, {\partial \over \partial \phi} ;
\end{align*}
the reader should check these formulas. These equations allow us to compute $\hat{\vec{L}}^2 =
\hat{\vec{L}} \cdot \hat{\vec{L}} = \hat{L}_x^2+\hat{L}_y^2+\hat{L}_z^2$.
Carrying out the multiplication,
\begin{equation} \label{eq:L2}
    \hat{\vec{L}}^2 = -\hbar^2 \left[ {1
    \over \sin{\theta}} {\partial \over \partial \theta} \left(
    \sin{\theta} {\partial \over \partial \theta} \right) + {1 \over
    \sin^2{\theta}} {\partial^2 \over \partial \phi^2} \right],
\end{equation}
and we see that by \eqref{eq:Yeig}, the functions $Y$
are eigenfunctions of the $\hat{\vec{L}}^2$ operator.

We claimed in the last section that the functions $Y(\theta,
\phi)$ were homogeneous polynomials with restricted domain, so let
us write $Y_\ell(\theta, \phi)$ where $\ell$ denotes the degree of
homogeneity. In Section \ref{sec:sphharm}, we will see an easy way
to compute the eigenvalue of $\hat{\vec{L}}^2$ associated with
$Y_\ell$, and it will turn out to be $\hbar^2\ell(\ell+1)$. So we
claim that in three dimensions, the function
$Y_\ell(\theta, \phi)$ carries an angular momentum of $\hbar
\sqrt{\ell(\ell+1)}$.

In Chapter \ref{ch:sph}, we will give rigorous
foundations to the seemingly coincidental facts we have discovered in this chapter about
the functions $Y$ that arose as solutions to certain differential equations.
But first, we will devote a chapter to gaining some practice and intuition working in $\mathbb{R}^p$.

\chapter{Working in $p$ Dimensions} \label{ch:rp}

In this chapter, we spend some time developing our skills in performing calculations
in $\mathbb{R}^p$ and exercising our abilities in visualizing a $p$-dimensional space for an arbitrary natural number $p$. We will
use the majority of the results we obtain here in the development of our main subject,
but some topics we discuss just out of pure interest or to improve our intuition.

First, let us generalize the definition of the Laplace operator to $\mathbb{R}^p$, where a point\footnote{We will not place vector arrows above points $x$ in $\mathbb R^p$.}
$x$ is given by the ordered pair $(x_1,x_2,\ldots,x_p)$.

\begin{definition}
    The \emph{Laplace operator} in $\mathbb{R}^p$ is given by
    \begin{equation}
        \Delta _p \overset{\text{def}}{=} \sum _{i=1}^p {\partial ^2 \over \partial  x_i^2}
    \label{eq:pLaplacian}
    \end{equation}
    The \emph{del operator} in $\mathbb{R}^p$ is the vector operator
    \[
        \nabla_p \overset{\text{def}}{=} \left( {\partial \over \partial x_1}, {\partial \over \partial x_2}, \ldots, {\partial \over \partial x_p}
        \right).
    \]
\end{definition}

\section{Rotations in $\mathbb{R}^p$} \label{sec:rota}

Let us quickly consider orthogonal rotations of the coordinate axes in $\mathbb{R}^p$.
Such rotations leave the lengths of vectors unchanged. Indeed, the length of a vector is a geometric quantity; rotating the coordinate system
we use to describe the vector leaves its length invariant. In fact, in a more abstract setting, we could define a rotation to be any
transformation of coordinates that leaves the lengths of vectors unchanged.

In what follows, we let $x$ denote a
column vector\footnote{The superscript $t$ denotes the operation of matrix transposition.}
$(x_1,x_2,\ldots,x_p)^t$ in $\mathbb{R}^p$ and use $\langle \cdot , \cdot \rangle$ to represent
the dot product of two vectors. The fact that a rotation matrix $R$ leaves the length of $x$ invariant
means $\langle Rx,Rx \rangle = \langle x,x \rangle$. Moreover,
since the dot product between any two vectors $x,y$ can be written as
\[
    \langle x,y \rangle = {1\over 2} \big( \langle x+y, x+y \rangle - \langle x,x \rangle - \langle y,y \rangle \big),
\]
it follows that coordinate rotations leave all dot products invariant.

Notice further that we can write dot products such as $\langle x,y \rangle$ as matrix products $y^t x$. In this notation, the requirement that
$\langle Rx,Ry \rangle = \langle x,y \rangle$ translates into the necessity of $(Ry)^t (Rx) = y^t x$, or $y^tR^tRx = y^tx$. Since this equation
must hold for all $x,y \in \mathbb{R}^p$, we can conclude that any rotation matrix $R$ must satisfy $R^tR=I$, where $I$ is the identity matrix.

Now we can verify our claim in the first few sentences of this booklet that the Laplace operator $\Delta_p$ remains unchanged after being subjected
to a rotation of coordinates.

\begin{proposition} \label{prop:invlap}
  The Laplace operator $\Delta_p$ is invariant under coordinate
  rotations. That is, if $R$ is a rotation matrix and $x' = Rx$,
  then $\Delta'_p = \Delta_p$, i.e. \[\sum_{j=1}^p \left({\partial \over \partial x'_j} \right)^2 =
  \sum_{j=1}^p \left({\partial \over \partial x_j} \right)^2.\]
\end{proposition}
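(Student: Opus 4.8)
The plan is to express the rotated partial derivatives $\partial/\partial x'_j$ in terms of the original ones $\partial/\partial x_k$ via the chain rule, and then to exploit the orthogonality relation $R^tR = I$ (established just above) to collapse the resulting double sum into the Laplacian.

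First I would invert the relation $x' = Rx$. Since any rotation matrix satisfies $R^tR = I$, the matrix $R$ is invertible with $R^{-1} = R^t$, so $x = R^t x'$. In components this reads $x_k = \sum_i R_{ik} x'_i$, whence $\partial x_k / \partial x'_j = R_{jk}$ (only the $i=j$ term survives). The chain rule then yields
\[
    \frac{\partial}{\partial x'_j} = \sum_{k=1}^p \frac{\partial x_k}{\partial x'_j} \frac{\partial}{\partial x_k} = \sum_{k=1}^p R_{jk} \frac{\partial}{\partial x_k}.
\]

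Next I would square this operator and sum over $j$. Squaring gives
\[
    \left( \frac{\partial}{\partial x'_j} \right)^2 = \sum_{k=1}^p \sum_{l=1}^p R_{jk} R_{jl} \frac{\partial^2}{\partial x_k \partial x_l},
\]
and summing over $j$, then interchanging the order of summation, produces the coefficient $\sum_{j=1}^p R_{jk} R_{jl}$, which is precisely the $(k,l)$ entry of $R^tR$. By orthogonality this equals $\delta_{kl}$, so only the diagonal terms $k=l$ survive and we are left with $\sum_k \partial^2/\partial x_k^2 = \Delta_p$, as claimed.

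The computation is essentially routine once the setup is fixed; the step demanding the most care is the index bookkeeping. In particular I would be careful to identify the chain-rule coefficient $\partial x_k / \partial x'_j$ as $R_{jk}$ (the transpose entry, arising because we invert using $x = R^t x'$), and then to recognize $\sum_j R_{jk} R_{jl}$ as an entry of $R^tR$ rather than $RR^t$. Since mixed partial derivatives of a smooth function commute, no further subtlety arises from the off-diagonal second-order terms, and the orthogonality relation does all the remaining work.
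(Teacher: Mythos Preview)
Your proof is correct and follows essentially the same chain-rule-plus-orthogonality computation as the paper; the only cosmetic difference is that the paper runs it in the reverse direction (expressing $\partial/\partial x_j$ in terms of $\partial/\partial x'_k$ and invoking $RR^t=I$), and it also prefaces the computation with the one-line observation that $\Delta_p = \nabla_p \cdot \nabla_p$ is a dot product and hence rotation-invariant.
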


\begin{proof}
    This  can be proved very easily by noticing that $\Delta_p = \nabla_p \cdot \nabla_p$ is a dot product of vector operators. Since all dot
    products are unchanged by coordinate rotations, we can conclude that $\Delta_p$ is not affected by any rotation $R$.

    In case the reader is not satisfied with this quick justification, let us compute $\Delta_p'$, the Laplace operator after application of a rotation
    of coordinates $R$.  Since $R$ is a rotation matrix, it is orthogonal, i.e. $RR^t = I$. Then, using the chain rule,
  \begin{align*}
    \Delta_p &= \sum_{j=1}^p \left({\partial \over \partial x_j} \right)^2 =
    \sum_{j=1}^p \left[
    \left( \sum_{k=1}^p {\partial x'_k \over \partial x_j}{\partial \over \partial x'_k} \right)
    \left( \sum_{\ell=1}^p {\partial x'_\ell \over \partial x_j}{\partial \over \partial x'_\ell} \right) \right] \\
    &= \sum_{j=1}^p \left[
    \left( \sum_{k=1}^p R_{kj}{\partial \over \partial x'_k} \right)
    \left( \sum_{\ell=1}^p R_{\ell j}{\partial \over \partial x'_\ell} \right) \right],
  \end{align*}
  so
  \begin{align*}
    \Delta_p &= \sum_{k,\ell = 1}^p {\partial \over \partial
    x'_k}{\partial \over \partial x'_\ell} \left( {\sum_{j=1}^p R_{kj}R^t_{j\ell}} \right)
    = \sum_{k=1}^p \left( {\partial \over \partial x'_k} \right)^2 = \Delta'_p.\
  \end{align*}
  This proves the proposition. \qed
\end{proof}

\section{Spherical Coordinates in $p$ Dimensions} \label{sec:pdim}

Now, in order to develop some experience and intuition working in
higher-dimensional spaces, we will develop the spherical  coordinate system for $\mathbb{R}^p$ in considerable detail. In
particular, we will use an inductive technique to come up with the expression of the
spherical coordinates in $p$ dimensions in terms of the corresponding Cartesian coordinates.

We will let our space have axes denoted $x_1, x_2, \ldots$ First,
in two dimensions, spherical coordinates are just the polar
coordinates given in \eqref{eq:polar},
\[ \begin{array}{ccccc}
  \vspace{0.25cm} r &=& \sqrt{x_1^2 + x_2^2} &\in& [0,\infty),\\
  \phi &=& \tan^{-1}{(x_2 / x_1)} &\in& [0, 2\pi),
\end{array} \]
where $r$ is the distance from the origin and $\phi$ is the azimuthal angle\footnote{We keep the definition of $\phi$ sloppy throughout this section. A more precise formula would use a two-argument $\tan^{-1}$ function that produces angles on the entire unit circle.} in the plane that measures the rotation around the origin.
The  inverse transformation is
$$
      x_1 ~=~ r \, \cos\phi~, \quad
      x_2 ~=~ r \, \sin\phi~.
$$

When we move to three dimensions we add an axis, naming it $x_3$, perpendicular to
the plane. Now, the polar coordinates above can only define a location in the plane; thus, they only tell us on which vertical
line (i.e., line parallel to the $x_3$-axis) we lie, as we can see in Figure \ref{fig:pdim} with $p=3$. To pinpoint our location on this line, we introduce a new angle
$\theta_1$. When we also redefine $r$ to be the three-dimensional distance from the
origin, we have the spherical coordinates given in \eqref{eq:SphCoor},
\[
 \begin{array}{ccccc}
     \vspace{0.25cm} r &=& \sqrt{x_1^2 + x_2^2+x_3^2} &\in& [0,\infty),\\
      \vspace{0.25cm} \phi &=& \tan^{-1}{(x_2 / x_1)} &\in& [0, 2\pi),\\
      \theta_1 &=& \tan^{-1}{ \left( \sqrt{x_1^2+x_2^2} \big / x_3
     \right)}  &\in& [0,\pi].
\end{array}
\]

\begin{figure}[h!]
    \begin{center}
    \setlength{\unitlength}{1mm}
    \begin{picture}(110,80)
        \put(0,0){\includegraphics[width=11cm]{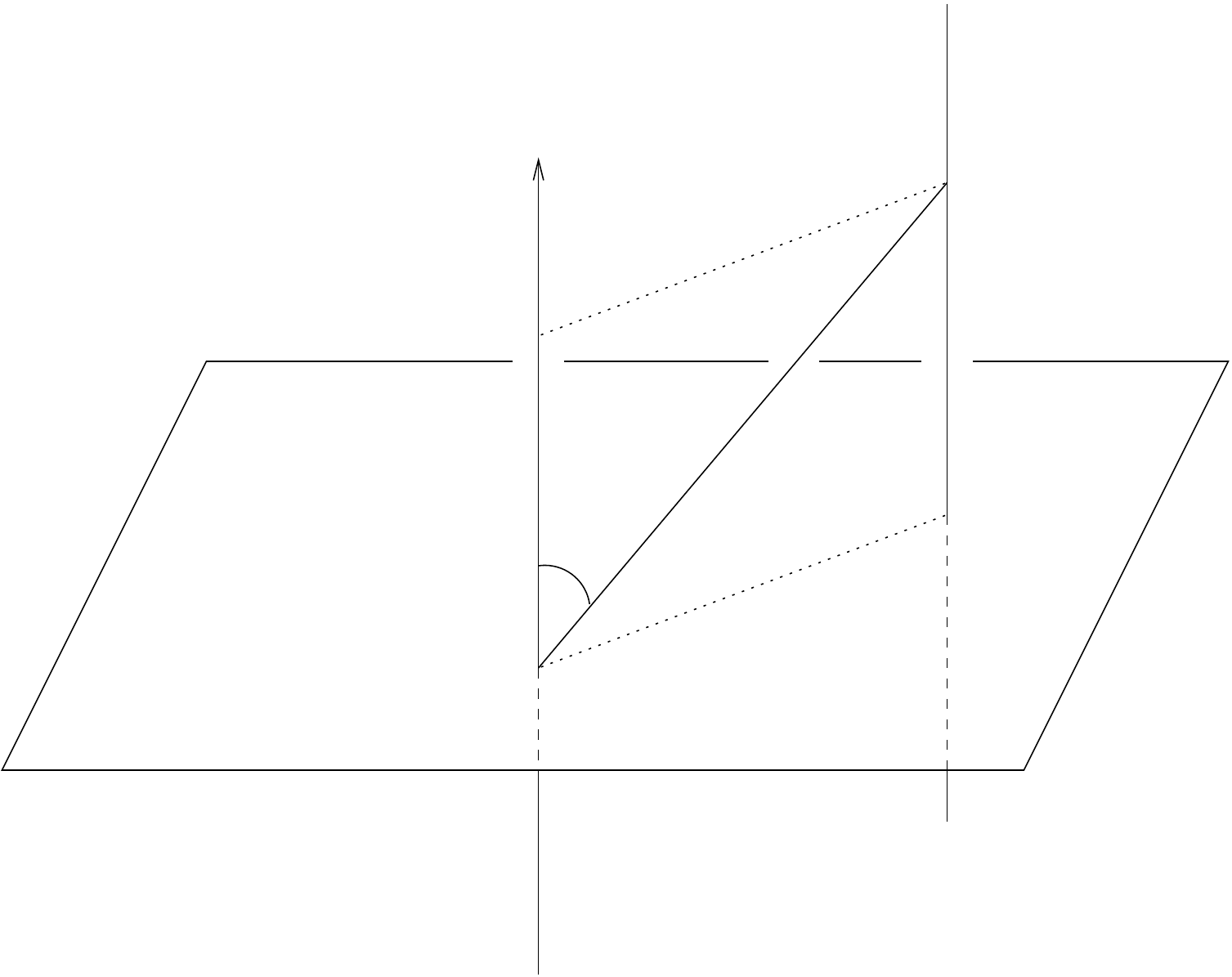}}
        \put(44,26){$O$} \put(44,57){$A$} \put(86,70){$P$} \put(86,41){$B$} \put(43,71){$x_p$}
        \put(5,70){$\mathbb{R}^p$}  \put(5,20){$\mathbb{R}^{p-1}$}
        \put(49,38){\footnotesize $\theta_{p-2}$}
        \put(65,45){$r_p$}
        \put(55,28){\rotatebox{20}{\scriptsize $r_{p-1}=r_p\sin\theta_{p-2}$}}
        \put(44,32){\rotatebox{90}{\scriptsize $x_p=r_p\cos\theta_{p-2}$}}
       \end{picture}
    \end{center}
    \caption{\footnotesize In going from $\mathbb{R}^{p-1}$ to $\mathbb{R}^p$, we visualize $\mathbb{R}^{p-1}$ as a plane and add  a new perpendicular direction.   We introduce a new angular coordinate $\theta_{p-2}$ to determine  the location in the new direction.}
    \label{fig:pdim}
\end{figure}

Now let us imagine moving to four dimensions by adding an axis --- name it the $x_4$-axis ---
perpendicular to the three-dimensional space just discussed. The
3D spherical coordinates given above can only define a location in
$\mathbb{R}^3$, so they only tell us on which ``vertical'' line
(i.e., line parallel to the $x_4$-axis) we lie, as in Figure
\ref{fig:pdim} with $p=4$. We thus introduce a new angle $\theta_2$ to determine the location on this line. We redefine $r$ to be the four-dimensional distance from the origin, and this completes the construction of the 4D spherical coordinates,
\[ \begin{array}{ccccc}
  \vspace{0.25cm} r &=& \sqrt{x_1^2 + x_2^2+x_3^2+x_4^2} &\in& [0,\infty),\\
  \vspace{0.25cm} \phi &=& \tan^{-1}{(x_2 / x_1)} &\in& [0, 2\pi),\\
  \vspace{0.25cm} \theta_1 &=& \tan^{-1}{ \left( \sqrt{x_1^2+x_2^2} \big / x_3
  \right)}  &\in& [0,\pi],\\
  \theta_2 &=& \tan^{-1}{ \left( \sqrt{x_1^2+x_2^2+x_3^2} \big /
  x_4 \right)}  &\in& [0,\pi],
\end{array} \]
where it should be clear from the figure that the new angle
$\theta_2$ only ranges from $0$ to $\pi$.

In going from three to four dimensions, we mimicked the way in
which we transitioned from two to three dimensions. We follow the same
procedure each time we move up in dimension. For arbitrary $p$, this yields
\[ \begin{array}{ccl}
  \vspace{0.25cm} r &=& \sqrt{x_1^2 + x_2^2+\cdots+x_p^2} ,\\
  \vspace{0.25cm} \phi &=& \tan^{-1}{(x_2 / x_1)} ,\\
  \vspace{0.25cm} \theta_1 &=& \tan^{-1}{ \left( \sqrt{x_1^2+x_2^2} \big / x_3
  \right)}  ,\\
  &\vdots&\\
  \theta_{p-2} &=& \tan^{-1}{ \left( \sqrt{x_1^2+x_2^2+\cdots+x_{p-1}^2} \Big /
  x_p \right)} ,
\end{array} \]
where the ranges on the coordinates are as expected from the
previous cases.

We have thus defined the spherical coordinates in $p$-dimensions in terms of the corresponding Cartesian coordinates. To write down the inverse relations we use projections, with Figure \ref{fig:pdim} as an aid. As before, we will derive these relations in detail for a few instructive cases before writing down the most general expressions.

We have already written down $x_1,x_2$ in terms of $r,\phi$ so let's use $\mathbb R^3$ as our first example. We imagine  $\mathbb{R}^3$  as the direct sum of a two-dimensional plane $\mathbb{R}^2$ with the real line $\mathbb{R}$. Then, given the point $P$ in $\mathbb{R}^3$ with spherical coordinates $(r_3, \phi, \theta_1)$, the vector $\overrightarrow{OP}$ can be written as a sum $\overrightarrow{OA}+\overrightarrow{OB}$, where $\overrightarrow{OA}$ lies along the $x_3$-axis and has magnitude
$r_3\,\cos\theta_1$ and $\overrightarrow{OB}$ lies in the plane and has magnitude $r_2=r_3\,\sin\theta_1$. The point $B$ thus has spherical coordinates $(r_2, \phi)$ in the plane, implying
\[
    x_1=r_2 \, \cos\phi \quad \text{and} \quad x_2=r_2\,\sin\phi,
\]
so
\begin{eqnarray}
      x_1  &=&  r_3 \, \sin\theta_1 \, \cos\phi ~, \label{eq:sphx1}\\
      x_2  &=&  r_3 \, \sin\theta_1 \, \sin\phi~, \\
      x_3  &=&  r_3 \, \cos\theta_1 ~. \label{eq:sphx3}
\end{eqnarray}

Let us examine the situation in $\mathbb R^4$ using the same technique. Given the point $P$ with radial distance $r_4$ from the origin, we decompose $\overrightarrow{OP}$ into two vectors
$\overrightarrow{OA}+\overrightarrow{OB}$, where $\overrightarrow{OA}$ lies along the $x_4$-axis and has magnitude
$r_4\,\cos\theta_2$ and $\overrightarrow{OB}$ lies in the ``plane'' and has magnitude $r_{3}=r_4\,\sin\theta_2$. The point $B$ thus has spherical coordinates
$(r_3, \phi, \theta_1)$ in the three-dimensional space, so $x_1, x_2, x_3$ are as in \eqref{eq:sphx1}--\eqref{eq:sphx3}.
Therefore
\begin{eqnarray*}
      x_1  &=&  r_4 \, \sin\theta_2 \, \sin\theta_1 \, \cos\phi ~, \\
      x_2  &=&  r_4 \, \sin\theta_2 \, \sin\theta_1 \, \sin\phi~, \\
      x_3  &=&  r_4 \, \sin\theta_2 \, \cos\theta_1~, \\
      x_4  &=&  r_4  \, \cos\theta_2 ~.
\end{eqnarray*}

If the reader has understood the previous constructions, the expressions in $p$ dimensions should be evident. Given the radius $r_p$ in $\mathbb{R}^p$, we project the position vector onto $\mathbb{R}^{p-1}$, obtaining the radius in the subspace $\mathbb{R}^{p-1}$ given by $r_{p-1} = r_p \sin{\theta_{p-2}}$. In this way we can perform a series of projections to get down to a space in which we already know the relations. This procedure leads to (dropping the subscript on $r_p$)
\[ \begin{array}{rcl}
  x_1 &=& r \, \sin{\theta_{p-2}} \, \sin{\theta_{p-3}} \cdots
  \sin{\theta_{3}} \, \sin{\theta_{2}} \, \sin{\theta_{1}} \, \cos{\phi}, \\
  x_2 &=& r \, \sin{\theta_{p-2}} \, \sin{\theta_{p-3}} \cdots
  \sin{\theta_{3}} \, \sin{\theta_{2}} \, \sin{\theta_{1}} \, \sin{\phi}, \\
  x_3 &=& r \, \sin{\theta_{p-2}} \, \sin{\theta_{p-3}} \cdots
  \sin{\theta_{3}} \, \sin{\theta_{2}} \, \cos{\theta_1}, \\
  x_4 &=& r \, \sin{\theta_{p-2}} \, \sin{\theta_{p-3}} \cdots
  \sin{\theta_{3}} \, \cos{\theta_2}, \\
  &\vdots&\\
  x_{p-1} &=& r \, \sin{\theta_{p-2}} \, \cos{\theta_{p-3}},\\
  x_p &=& r \, \cos{\theta_{p-2}}.
\end{array} \]

With this chapter as an exception, we will rarely refer explicitly to the angles $\phi, \theta_1, \ldots, \theta_{p-2}$.
However, we will frequently use $r = \sqrt{x_1^2 + \cdots + x_p^2}$.

\section{The Sphere in Higher Dimensions} \label{sec:spheres}

We will give definitions of the sphere and the ball in an
arbitrary number of dimensions that are analogous to the
definitions of the familiar sphere and ball we visualize embedded
in $\mathbb{R}^3$.

\begin{definition}
  The \emph{$(p-1)$-sphere of radius $\delta$ centered at $x_0$} is the set
  \[
    S_\delta^{p-1}(x_0) \overset{\text{def}}{=} \{x \in \mathbb{R}^p : |x-x_0| = \delta\}.
  \]
  The \emph{unit $(p-1)$-sphere}\footnote{Frequently, we will drop the ``unit,'' though it is still implied.} is the set
  $S^{p-1} \overset{\text{def}}{=} S_1^{p-1}(0)$.
\end{definition}

\begin{definition}
  The \emph{open $p$-ball of radius $\delta$ centered at $x_0$} is the set
  \[
    B_\delta^p(x_0) \overset{\text{def}}{=} \{ x \in \mathbb{R}^p: |x-x_0| < \delta \}.
  \]
  The \emph{open unit $p$-ball} is the set $B^p \overset{\text{def}}{=} B_1^p(0)$.
  The \emph{closed $p$-ball} is the set $\bar{B}^p \overset{\text{def}}{=} B^p \cup S^{p-1}$.
\end{definition}

Notice we call the sphere that we picture embedded in
$\mathbb{R}^p$ the $(p-1)$-sphere because it is
$(p-1)$-dimensional. It requires $p-1$ angles to define one's
location on the sphere, as we saw in Section \ref{sec:pdim}.
However, it requires $p$ coordinates to locate a point in the
ball because we must also specify $r$; this justifies the notation $B^p$. Notice also that the
$(p-1)$-sphere is the boundary of the $p$-ball when we think of these as subsets of
$\mathbb{R}^p$; we write $S^{p-1} = \partial B^p$.

Let us compute the surface area of the $(p-1)$-sphere. Towards this goal, we recall that that the gamma function is
defined by
\[
    \Gamma(z) = \int_0^\infty e^{-t}t^{z-1}\,dt,
\]
for any $z \in \mathbb{C}$ such that $\text{Re}(z) > 0$. Then

\begin{lemma}
\label{lemma:GammaFunction}
For all $p \in \mathbb{C}$ such that $\text{Re}(p) >0$, we have
\[
    \int_0^{+\infty} e^{-r^2}r^{p-1}\,dr = {1 \over 2} \, \Gamma \left({p\over 2}\right).
\]
\end{lemma}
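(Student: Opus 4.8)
The plan is to evaluate the integral $\int_0^{+\infty} e^{-r^2} r^{p-1}\,dr$ by a direct substitution that transforms it into the defining integral for the gamma function. Looking at the target $\frac{1}{2}\Gamma\!\left(\frac{p}{2}\right) = \frac{1}{2}\int_0^\infty e^{-t} t^{p/2 - 1}\,dt$, the natural move is to set $t = r^2$. First I would compute the differential: from $t = r^2$ we get $dt = 2r\,dr$, so $dr = \frac{dt}{2r} = \frac{dt}{2\sqrt{t}}$ (using $r = \sqrt{t}$ since $r \geq 0$ on the domain of integration). I would also note that the limits transform correctly: as $r$ ranges over $[0,+\infty)$, so does $t$.

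Next I would substitute into the integrand. We have $e^{-r^2} = e^{-t}$ and $r^{p-1} = t^{(p-1)/2}$. Combining the integrand with the Jacobian factor gives
\[
    e^{-t} \, t^{(p-1)/2} \cdot \frac{1}{2\sqrt{t}} \, dt = \frac{1}{2} \, e^{-t} \, t^{(p-1)/2 - 1/2} \, dt = \frac{1}{2} \, e^{-t} \, t^{p/2 - 1} \, dt.
\]
Pulling out the constant $\frac{1}{2}$ and recognizing the remaining integral as exactly $\int_0^\infty e^{-t} t^{p/2 - 1}\,dt = \Gamma\!\left(\frac{p}{2}\right)$ completes the computation.

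The one point requiring a little care is the convergence and the hypothesis $\operatorname{Re}(p) > 0$: this is precisely the condition that makes $\operatorname{Re}(p/2) > 0$, so that the resulting gamma integral falls within the region where $\Gamma$ is defined (as stated in the excerpt, $\Gamma(z)$ requires $\operatorname{Re}(z) > 0$). I would remark that the substitution $t = r^2$ is a valid real change of variable for the real part of the integral, and that the identity then extends to complex $p$ with $\operatorname{Re}(p) > 0$ by the definition of $r^{p-1} = e^{(p-1)\ln r}$ for $r > 0$. Honestly, though, there is no real obstacle here: the entire lemma is the single substitution $t = r^2$, and the "hard part" is merely bookkeeping the exponent $\frac{p-1}{2} - \frac{1}{2} = \frac{p}{2} - 1$ correctly so that it matches the gamma integrand.
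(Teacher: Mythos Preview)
Your proof is correct and is essentially identical to the paper's: the paper also makes the single substitution $u=r^2$ and reads off $\tfrac12\Gamma(p/2)$ directly. Your additional remarks on convergence and the role of $\operatorname{Re}(p)>0$ are accurate and slightly more detailed than what the paper provides.
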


\begin{proof}
Using the substitution $u = r^2$,
\[
    \int_0^{+\infty} e^{-r^2}r^{p-1}\,dr = \int_0^\infty e^{-u}u^{p-1 \over 2}\,{dr \over 2 \sqrt{u}} = {1 \over 2} \int_0^{+\infty} e^{-u}u^{{p \over 2}-1}\,du = {1 \over 2} \, \Gamma \left({p\over 2}\right),
\]
as sought. \qed
\end{proof}

\begin{proposition}
  If $\Omega_{p-1}$ denotes the solid angle in $\mathbb R^p$ (equivalent numerically to the surface area of $S^{p-1}$), then
  \[
    \Omega_{p-1} = {2\pi^{p/2} \over \Gamma(p/2)}.
  \]
  \label{lem:S}
\end{proposition}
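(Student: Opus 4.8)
The plan is to evaluate the same Gaussian integral
\[
    I = \int_{\mathbb{R}^p} e^{-|x|^2}\,dx
\]
in two different coordinate systems and then compare the results. First I would compute $I$ in Cartesian coordinates, where $|x|^2 = x_1^2 + \cdots + x_p^2$ lets the integrand factor completely, so that
\[
    I = \prod_{i=1}^p \int_{-\infty}^{\infty} e^{-x_i^2}\,dx_i = \left( \int_{-\infty}^{\infty} e^{-t^2}\,dt \right)^p = \pi^{p/2},
\]
using the standard value $\int_{-\infty}^{\infty} e^{-t^2}\,dt = \sqrt{\pi}$ of the one-dimensional Gaussian integral (which can itself be recovered from Lemma \ref{lemma:GammaFunction} with $p=1$, together with $\Gamma(1/2) = \sqrt{\pi}$ and the evenness of the integrand).

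Next I would compute $I$ in the spherical coordinates of Section \ref{sec:pdim}. The key structural fact is that the volume element separates as $dx = r^{p-1}\,dr\,d\Omega$, where $d\Omega$ is the surface measure on the unit sphere $S^{p-1}$ and $r^{p-1}$ is the Jacobian factor arising from the radial scaling. Since the integrand $e^{-r^2}$ depends on $r$ alone, the angular and radial integrations decouple:
\[
    I = \left( \int_{S^{p-1}} d\Omega \right) \left( \int_0^{\infty} e^{-r^2} r^{p-1}\,dr \right) = \Omega_{p-1} \cdot \frac{1}{2}\,\Gamma\!\left( \frac{p}{2} \right),
\]
where the angular integral is exactly $\Omega_{p-1}$ by the definition of the solid angle, and the radial integral is evaluated using Lemma \ref{lemma:GammaFunction}. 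Equating the two expressions for $I$ gives $\pi^{p/2} = \tfrac{1}{2}\,\Omega_{p-1}\,\Gamma(p/2)$, and solving for $\Omega_{p-1}$ yields the claimed formula.

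The main obstacle is justifying the separated volume element $dx = r^{p-1}\,dr\,d\Omega$ rigorously --- that is, showing that the Jacobian of the $p$-dimensional spherical coordinate map factors as $r^{p-1}$ times a function of the angles alone. One clean way to obtain the radial power without grinding through the full Jacobian determinant is to argue by scaling: the sphere $S_r^{p-1}$ of radius $r$ is the image of $S^{p-1}$ under the dilation $x \mapsto rx$, so its surface area equals $r^{p-1}\Omega_{p-1}$, and the infinitesimal shell between radii $r$ and $r+dr$ therefore contributes volume $r^{p-1}\Omega_{p-1}\,dr$. Integrating $e^{-r^2}$ against these shells reproduces the radial integral above and lets me bypass the explicit computation of the angular part of the Jacobian, which is the only genuinely technical point in the argument.
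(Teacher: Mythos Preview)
Your proposal is correct and follows essentially the same approach as the paper: compute the Gaussian integral $\int_{\mathbb{R}^p} e^{-|x|^2}\,dx$ once in Cartesian coordinates to get $\pi^{p/2}$, once in spherical coordinates to get $\Omega_{p-1}\cdot\tfrac12\Gamma(p/2)$ via Lemma~\ref{lemma:GammaFunction}, and equate. The paper handles your ``main obstacle'' (the form $dV_p = r^{p-1}\Omega_{p-1}\,dr$) in the discussion immediately preceding the proposition, using the same scaling/shell argument you sketch.
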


Before proving this we will digress slightly and discuss how the surface area of the $(p-1)$-sphere relates to the volume of the $p$-ball. First, consider the  $p$-ball of radius $r$ centered at the origin. Notice that the radius $r$ completely determines such a ball. If we were to determine the volume $V_p$ of this $p$-ball, we would obtain $V_p = c\,r^p$ where $c$ is some constant. Here, we have determined that $V_p \propto r^p$ because $V_p$ must have dimensions of $[\text{length}]^p$ and the only variable characterizing the $p$-ball is $r$ (which has dimensions of length). Now if we differentiate $V_p$ with respect to $r$, we get
\[
    {dV_p \over dr} = (p-1)\,c\,r^{p-1}.
\]
Thus, if the radius of a $p$-ball changes by an infinitesimal amount $\delta r$, its volume will change by some infinitesimal amount $\delta V_p$, and
\begin{equation} \label{eq:dV1}
    \delta V_p = (p-1)\,c\,r^{p-1}\,\delta r.
\end{equation}
But in this case, the small change in volume $\delta V$ should equal the surface area $A_{p-1}(r)$ of the $p$-ball multiplied by the small change in radius $\delta r$,
\begin{equation} \label{eq:dV2}
    \delta V_p = A_{p-1}(r)\,\delta r.
\end{equation}
From \eqref{eq:dV1} and \eqref{eq:dV2} we see that
\[
    A_{p-1}(r) = (p-1)\,c\,r^{p-1},
\]
and if we let $\Omega_{p-1} = (p-1)\,c$ denote the numerical value of the surface area when $r=1$ (as in the statement of Proposition \ref{lem:S}), we get\footnote{We should stress that, since a physicist assigns physical
dimensions to each and every quantity, s/he would differentiate between the solid angle $\Omega_{p-1}$ and the surface area $A_{p-1}$ of the $(p-1)$-sphere. Although numerically the two quantities are equal for the unit sphere since $r=1$, they are different quantities since $\Omega_{p-1}$ is dimensionless while $A_{p-1}$ has dimensions of [length]$^{p-1}$.}
\begin{equation} \label{eq:romega}
    A_{p-1}(r) = \Omega_{p-1} \, r^{p-1}.
\end{equation}
We see that if we want to carry out an integral over $\mathbb{R}^p$ when the integrand depends only on $r$, we can use the differential volume element
\[
    dV_p  = A_{p-1}(r)\,dr = r^{p-1}\,\Omega_{p-1}\,dr.
\]
Now we are ready to prove Proposition \ref{lem:S}.

\begin{proof}
  Consider the integral,
  \[
    J = \int_{-\infty}^\infty \, dx_1 \int_{-\infty}^\infty \,
          dx_2 \cdots \int_{-\infty}^\infty \, dx_p \, e^{-(x_1^2 + x_2^2 + \cdots + x_p^2)}.
  \]
  This is really
  $$
    J = \left( \int_{-\infty}^\infty e^{-x^2} \, dx \right)^p = \left( \sqrt{\pi} \right) ^p .
  $$
  Using spherical coordinates however, we can write
  \[
    J = \int_{S^{p-1}}\int_0^{+\infty}  \, dV_p  ~  e^{-r^2}
      = \Omega_{p-1} \, \int_0^\infty e^{-r^2} \, r^{p-1} \, dr,
  \]
  since $\Omega_{p-1}$ is  just a constant. Therefore
  \[
    \Omega_{p-1} = {\pi^{p/2} \over \int_0^\infty e^{-r^2}\cdot r^{p-1}\,  dr}
  \]
  and, with the help of Lemma \ref{lemma:GammaFunction} we arrive at the advertised result.
 \qed
\end{proof}

\begin{remark}
  As a check, we can use this formula to determine the surface
  area (or circumference) of the 1-sphere (or circle) as well as
  the surface area of the 2-sphere, which is the familiar sphere
  that we embed in $\mathbb{R}^3$. As expected,
  \[
    \Omega_1 = 2\pi, \quad \Omega_2 = {2\pi^{3/2} \over
    \Gamma(3/2)} = 4\pi,
  \]
  using $\Gamma(3/2) = \sqrt{\pi}/2$. It is interesting to
  consider the 0-sphere, i.e., the sphere that we visualize
  embedded in $\mathbb{R}$. $S^0$ consists of all points on the real line that are
  unit distance from the origin, so $S^0 = \{-1,1\}$. Using
  Lemma \ref{lem:S}, we find $\Omega_0 = 2$, since $\Gamma(1/2) =
  \sqrt{\pi}$. That is, the surface area of just two points on the real
  line is finite! This is consistent with standard calculus, though. On the real line, the radial distance is the absolute value
  of $x$. Hence, the concept of a  function $f(r)$ possessing spherical symmetry  coincides with the concept of an even function, $f(x) = f(-x)$.
  Then
  \[
    \int_{-\infty}^{+\infty} f(x)\, dx = 2 \int_0^\infty f(x)\, dx
                                                         = \int_0^\infty f(r) \, \Omega_0 \, r^0 \, dr.
  \]
\end{remark}

\section{Arc Length in Spherical Coordinates}

Now, let us compute the formula for the infinitesimal arc length $\delta s$ associated with a small displacement along a curve in $\mathbb{R}^p$.
In this section, we use infinitesimals in the physicist's way; that is, $\delta s$ and the other small lengths involved represent tiny displacements
that are small enough so that the errors involved in our approximations (such as assuming a small portion of a curve is almost straight) are as
small as desired (say, less than some given $\epsilon > 0$). We only use the formulas we derive under this assumption in the limit as $\delta s \to 0$
and likewise with the other infinitesimal quantities, so our approximations introduce no error into future calculations.

Working in $\mathbb{R}^p$, let us use the spherical coordinates we developed in Section \ref{sec:pdim}. Consider a point $P$ with coordinates
$(r,\phi,\theta_{1},\ldots,\theta_{p-2})$. Consider a small displacement vector $\delta \vec{s} = \overrightarrow{PP'}$ at the point $P$ in an arbitrary direction.
One component of this displacement is along the radial direction and has magnitude $\delta r$. The other $p-1$ components of $\delta \vec{s}$ are orthogonal to $\delta \vec{r}$ and
thus\footnote{The remaining components of $\delta \vec{s}$ lie in a surface where $\delta r = 0$ or $r = $ const. This is a $(p-1)$-sphere.} are along the surface $S^{p-1}_r(0)$. Thus by the \emph{Pythagorean Theorem}, we have so far $\delta s^2 = \delta r^2 + \cdots$ where the rest to be filled in
involves small changes in the angles $\theta_{p-2},\ldots,\theta_1,\phi$.

\begin{figure}[h!]
     \begin{center}
    \setlength{\unitlength}{1mm}
    \begin{picture}(80,80)
        \put(0,0){\includegraphics[width=8cm]{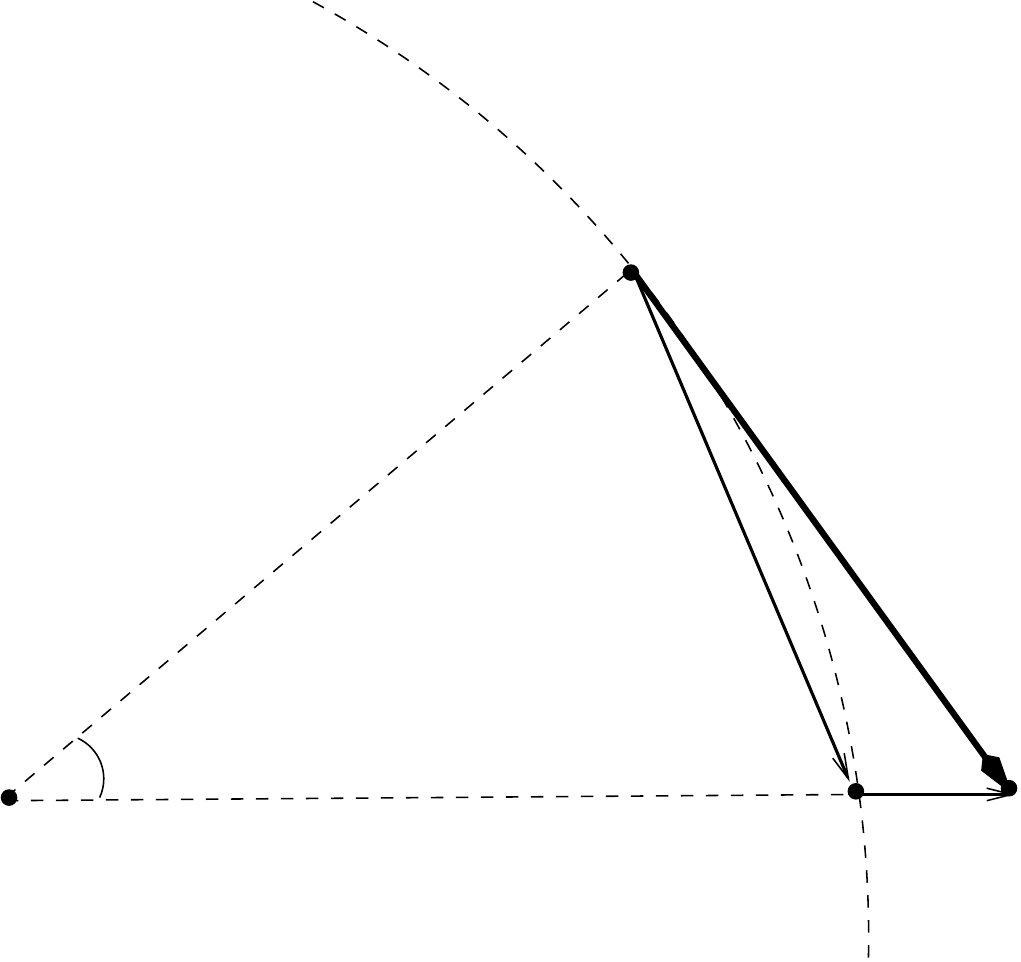}}
        \put(-1,9){$O$}  \put(50,55){$P$} \put(63,9){$Q$} \put(78,9){$P'$}
        \put(65,35){$\delta\vec s$}   \put(52,35){\rotatebox{-64}{$r \, \delta\omega_{p-1}$}}
        \put(71,9){$\delta r$}  \put(9,15){\small $\delta \omega_{p-1}$}
        \end{picture}
    \end{center}
    \caption{\footnotesize The plane containing the points $O,P,P'$. The displacement vector $\delta\vec{s}$ has a radial component of magnitude $\delta r$ and a
                   component  along the $(p-1)$-sphere of magnitude $r\,\delta\omega_{p-1}$.}
    \label{fig:ds}
\end{figure}

Let us consider the plane containing the vectors $\overrightarrow{OP}$ and $\overrightarrow{PP'}$,
where $O$ is the origin  (see Figure \ref{fig:ds}). We are familiar with the formula for an infinitesimal arc length in this two-dimensional plane, namely
\begin{equation}
\label{eq:ds}
    \delta s^2 = \delta r^2 + r^2 \, \delta \omega_{p-1}^2,
\end{equation}
where $\delta \omega_{p-1}$ is the small angle between the vectors $\overrightarrow{OP}$ and $\overrightarrow{OP'}$.
Note $\delta \omega_{p-1}$ is the angular displacement that occurs along the unit $(p-1)$-sphere.

Let us constrain the displacement $\delta \vec s$ to lie along a sphere by requiring $\delta r = 0$. If we compute $\delta s$ in this special case, we have found $\delta \omega_{p-1} = \delta s/r$ which we can plug into \eqref{eq:ds} to obtain the general result.

We can determine $\delta \omega_{p-1}$ inductively, starting in the plane with $p=2$. We know that on the unit circle $\delta s = r \, \delta \phi$, so
\[
    \delta \omega_1^2 = {\delta s^2 \over r^2} = \delta \phi^2.
\]

In $\mathbb{R}^3$, $\delta \vec s$ has a component $r \, \delta \theta_1$ along the direction of increasing $\theta_1$, so that $\delta s^2 = r^2 \, \delta \theta_1^2$ or $\delta \omega_2^2 = \delta \theta_1^2 + \cdots$,
and the remaining orthogonal components are
parallel to the $\mathbb{R}^2$ plane. Projecting $\overrightarrow{PP'}$ onto $\mathbb{R}^2$, we see that the infinitesimal displacement lies on a
circle  of radius $\sin \theta_1$. Using our result 
from $\mathbb{R}^2$, we get
\[
    \delta \omega_2^2 = \delta \theta_1^2 + \sin^2 \theta_1 \, \delta \omega_1^2 = \delta \theta_1^2 + \sin^2 \theta_1 \, \delta \phi^2,
\]
the familiar result in $\mathbb{R}^3$.

We can move to $\mathbb{R}^4$ in an analogous fashion. An infinitesimal arc length $\delta \vec s$ on the 3-sphere has a component $r \, \delta \theta_2$ along the
direction of increasing $\theta_2$, so that $\delta s^2 = r^2 \, \delta \theta_2^2$ or $\delta \omega_3^2 = \delta \theta_2^2 + \cdots$, and the remaining orthogonal components are parallel to $\mathbb{R}^3$
which we picture as a plane. Projecting $\overrightarrow{PP'}$ onto $\mathbb{R}^3$, we see that the infinitesimal displacement lies on a
2-sphere of radius $\sin \theta_2$, so
\[
    \delta \omega_3^2 = \delta \theta_2^2 + \sin^2 \theta_2 \, \delta \omega_2^2 = \delta \theta_2^2 + \sin^2 \theta_{p-2} \, \delta \theta_1^2 + \sin^2 \theta_2 \, \sin^2 \theta_1 \, \delta \phi^2.
\]

It is clear that this pattern continues, so that in $\mathbb{R}^p$, 
\[
    \delta \omega_{p-1} = \delta \theta_{p-2}^2 + \sin^2 \theta_{p-2} \, \delta \omega_{p-2}^2.
\]
By  filling in the form of $\delta \omega_{p-2}^2$ in this expression, we get
\[
    \delta \omega_{p-1} = \delta \theta_{p-2}^2 + \sin^2 \theta_{p-2} \, (\delta \theta_{p-3}^2 + \sin^2 \theta_{p-3} \, \delta \omega_{p-3}^2).
\]
Continuing to expand this expression, we come up with
\begin{align*}
    \delta \omega_{p-1}^2 = \ & \delta \theta_{p-2}^2 + \sin^2 \theta_{p-2} \, \delta \theta_{p-3}^2 + \sin^2 \theta_{p-2} \, \sin^2 \theta_{p-3} \, \delta \theta_{p-4}^2 \\
                        & + \cdots + \sin^2 \theta_{p-2} \, \sin^2 \theta_{p-3} \, \cdots \, \sin^2 \theta_1 \, \delta \phi^2.
\end{align*}
Putting this into \eqref{eq:ds} completes our computation.

\section{The Divergence Theorem in $\mathbb{R}^p$} \label{sec:div}

Let us now return to the use of Cartesian coordinates to give an intuitive ``derivation''
of the divergence theorem in $p$ dimensions, which we will use numerous times
in Chapter \ref{ch:sph}. For brevity, our justification of this theorem will be rather physical.
For a more mathematical treatment, the interested reader should consult a calculus book (for example, \cite{Calc:Stewart}) for the theorem in $\mathbb{R}^3$ and
see if s/he can generalize the proof there to $\mathbb{R}^p$. For a rigorous proof of the divergence theorem in $p$ dimensions,
s/he may consult an analysis text (for example, \cite{Anal:Rudin}).

\begin{theorem}
    Let $\vec{F}$ be a continuously differentiable vector field defined in the neighborhood of some closed, bounded domain $V$ in $\mathbb{R}^p$
    which has smooth boundary $\partial V$. Then\footnote{Here and from now on, $d^p x = dx_1 dx_2 \cdots dx_p$.}
    \begin{equation} \label{eq:divthm}
        \int_V \nabla \cdot \vec{F} \, d^p x = \oint_{\partial V} \vec{F} \cdot \hat{n} \, d \sigma,
    \end{equation}
    where $\hat{n}$ is the unit outward normal vector on $\partial V$ and $d \sigma$ is the differential element of surface area on $\partial V$.
\end{theorem}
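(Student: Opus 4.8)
The plan is to reduce the theorem to the single building block of a small coordinate-aligned box, where it amounts to nothing more than the fundamental theorem of calculus, and then to assemble a general domain out of such boxes so that all interior contributions cancel.

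First I would treat a closed rectangular box $B = [a_1,b_1] \times \cdots \times [a_p,b_p]$. Writing $\vec{F} = (F_1,\ldots,F_p)$ so that $\nabla \cdot \vec{F} = \sum_{i=1}^p \partial F_i / \partial x_i$, I would integrate the $i$-th term over $B$, performing the $x_i$-integration first. By the fundamental theorem of calculus this collapses to the difference $F_i(\ldots,b_i,\ldots) - F_i(\ldots,a_i,\ldots)$, and the remaining $(p-1)$-fold integral over the other variables is exactly an integral over the two faces of $B$ perpendicular to the $x_i$-axis, whose area element is $\prod_{j \neq i} dx_j$. On the face $x_i = b_i$ the outward normal is $+\hat{e}_i$ and on $x_i = a_i$ it is $-\hat{e}_i$, so this difference is precisely $\oint F_i\, n_i \, d\sigma$ over those two faces; the other $2(p-1)$ faces contribute nothing to this term since there $n_i = 0$. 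Summing over $i$ yields \eqref{eq:divthm} for the box.

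Next I would handle a general $V$ by overlaying a fine cubical grid of spacing $h$ and letting $V_h$ be the union of all grid boxes contained in $V$. Applying the box result to each small box and adding, every face interior to $V_h$ is shared by two adjacent boxes whose outward normals are opposite, so those flux contributions cancel in pairs. What survives is the volume integral of $\nabla \cdot \vec{F}$ over $V_h$ together with the flux through the exposed outer faces of $V_h$, which form a staircase approximation of $\partial V$.

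Finally I would pass to the limit $h \to 0$. Continuous differentiability of $\vec{F}$ makes $\nabla \cdot \vec{F}$ continuous, so its Riemann sums over $V_h$ converge to $\int_V \nabla \cdot \vec{F}\, d^p x$, while the region $V \setminus V_h$ near the boundary has volume tending to zero for a smooth $\partial V$. The hard part is the matching limit on the surface side: one must show that the flux through the staircase faces converges to $\oint_{\partial V} \vec{F} \cdot \hat{n}\, d\sigma$. This is the only genuinely delicate step, because the staircase normals take only the $2p$ axis directions and do not individually approach the true normal $\hat{n}$; the resolution is that, as $h \to 0$, the projected areas of the staircase faces onto each coordinate hyperplane converge to the corresponding projections of the smooth surface, so that the component sums $\sum F_i n_i$ converge direction by direction. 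Controlling this convergence rigorously is where the real analytic work lies (essentially the content of the reference to \cite{Anal:Rudin}); everything before it is bookkeeping layered on top of the one-variable fundamental theorem of calculus.
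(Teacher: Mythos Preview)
Your proposal is correct and follows essentially the same architecture as the paper's argument: establish the identity on a single coordinate box, tile $V$ with a grid of such boxes, observe that contributions from shared interior faces cancel in pairs, and pass to the limit as the mesh shrinks, with the staircase-to-smooth-boundary convergence singled out as the genuinely delicate step. The only notable difference is local: on a single box the paper estimates the surface integral by freezing $\vec{F}$ at a corner and recovering the divergence as a limit of difference quotients, whereas you compute the volume integral exactly via the one-variable fundamental theorem of calculus; your version is slightly cleaner but the two are interchangeable at this level of rigor.
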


\begin{poof}
    We will interpret $\vec{F}$ as the flux density of some $p$-dimensional fluid moving through the volume $V$. In unit time at a point $x$, the
    volume of fluid which flows past an arbitrarily oriented unit surface with unit normal vector $\hat{n}$ is
    given by $\vec{F}(x) \cdot \hat{n}$.

    Let us fill the interior
    of $V$ with a ``grid'' of disjoint boxes, none intersecting the boundary $\partial V$ --- see Figure \ref{fig:divthm}. If the boxes are comparable in size to $V$,
    they will not make a complete covering of the interior, since many regions of $V$ near $\partial V$ will remain uncovered;
    however, as the lengths of the box edges approach
    zero, the entire interior of $V$ can be covered by these boxes.
    \begin{figure}[h!]
        \begin{center}
            \setlength{\unitlength}{1mm}
            \begin{picture}(90,60)
            \put(0,0){\includegraphics[width=0.55\linewidth]{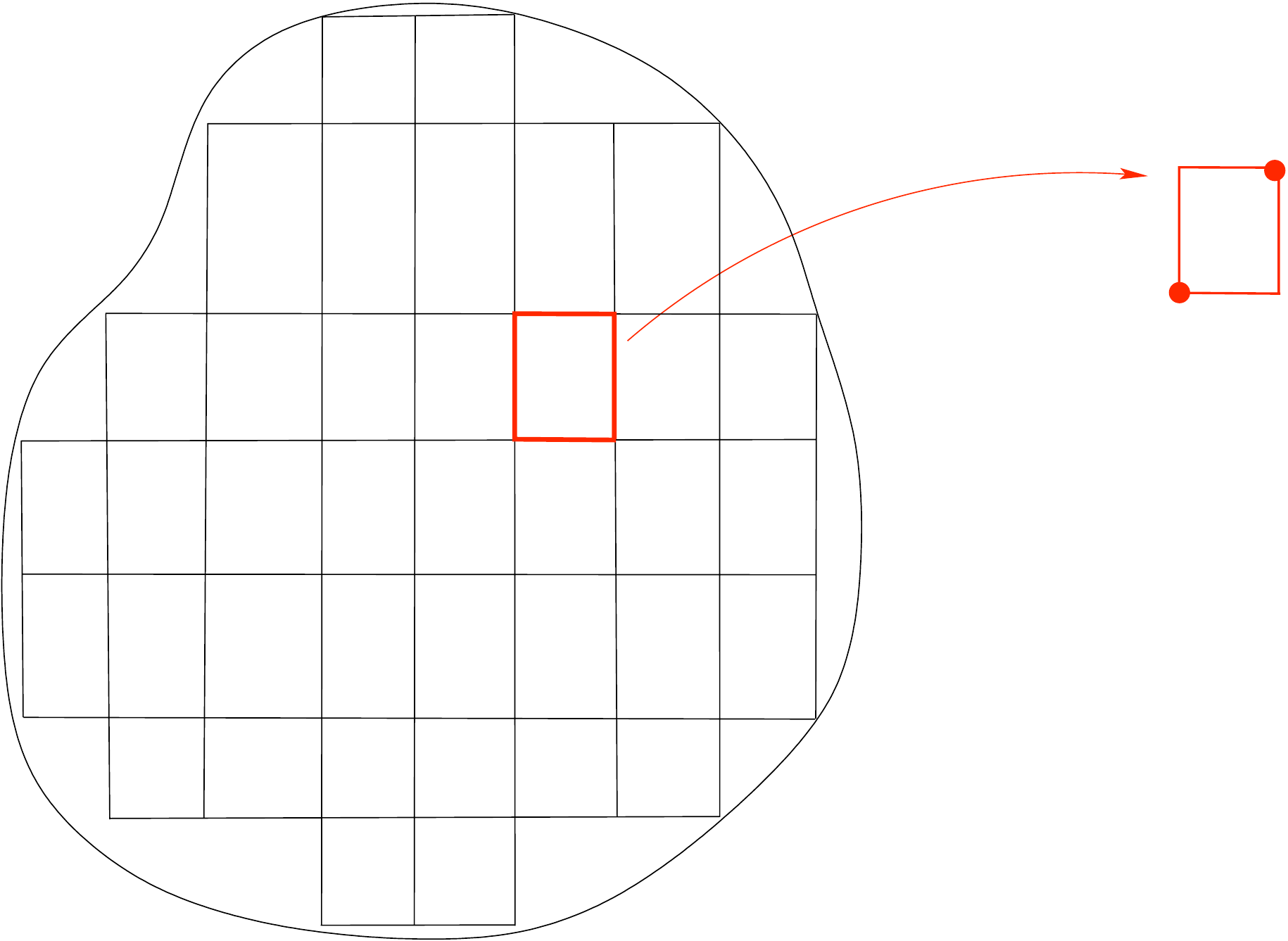}} \put(54,31){$(x_1,x_2)$}  \put(70,46){$(x_1+\delta x_1, x_2 + \delta x_2)$} \put(30,30){$Q$}
            \end{picture}
        \end{center}
        \caption{An example of a domain $V$ in $\mathbb{R}^2$. A similar picture would describe the situation in arbitrary $\mathbb{R}^p$.
            We consider one box $Q$ with bottom-left corner at $(x_1,x_2)$ and top-right corner at $(x_1+\delta x_1,x_2+\delta x_2)$.}
        \label{fig:divthm}
    \end{figure}
    We will consider one small box $Q$ in the interior of $V$, say with one corner at $(x_1,x_2,\ldots,x_p)$ and another at $(x_1+\delta x_1,x_2+\delta x_2,\ldots,x_p+\delta x_p)$,
    where the $\delta x_i$ are all positive.
    Consider the integral
    \[
        J = \oint_{\partial Q} \vec{F} \cdot \hat{n} \, d \sigma.
    \]
    The boundary $\partial Q$ consists of $2p$ planes, two orthogonal to each coordinate axis. The two planes orthogonal to the $x_1$ axis have surface area equal to $\delta x_2\,\delta x_3\cdots \delta x_p$. Since $\vec{F}$ is continuous, the integral of
    $\vec{F} \cdot \hat{n}$ over these surfaces can be written as\footnote{Here, $\hat{x}_1$ is a unit vector in the direction of increasing $x_1$.}
    \[
        \vec{F}(x_1+\delta x_1,x_2,\ldots,x_p) \cdot \hat{x}_1 \, \delta x_2 \, \delta x_3 \cdots \delta x_p,
    \]
    and
    \[
        \vec{F}(x_1,x_2,\ldots,x_p) \cdot (-\hat{x}_1) \, \delta x_2 \, \delta x_3 \cdots \delta x_p,
    \]
    since $\vec{F}$ changes very little over this plane as $\delta x_1,\delta x_2,\ldots,\delta x_p \to 0$. Now, there was nothing special about the $x_1$-axis, so we see that for    each $x_i$-axis, the integral $J$ will include a term
    \[
        \left[ \vec{F}(x_1,\ldots,x_i+\delta x_i,\ldots,x_p) - \vec{F}(x_1,\ldots,x_i,\ldots,x_p) \right] \cdot \hat{x}_i \, {\delta x_1 \cdots \delta x_p \over \delta x_i}.
    \]
    This can be rewritten as
    \[
        \left[ {\vec{F}(x_1,\ldots,x_i+\delta x_i,\ldots,x_p) - \vec{F}(x_1,\ldots,x_i,\ldots,x_p) \over \delta x_i} \right] \cdot \hat{x}_i \, \delta x_1 \cdots \delta x_p,
    \]
    which becomes
    \[
        {\partial \vec{F}(x_1,\ldots,x_p) \over \partial x_i} \cdot \hat{x}_i \, \delta x_1 \cdots \delta x_p
        \quad \text{or} \quad
        {\partial F_i(x_1,\ldots,x_p) \over \partial x_i} \, \delta x_1 \cdots \delta x_p
    \]
    in the limit with which we are concerned. Putting all these terms together, we get
    \[
        J = \sum_{i=1}^p {\partial F_i(x_1,\ldots,x_p) \over \partial x_i} \, \delta x_1 \cdots \delta x_p.
    \]
    But this is just
    \[
        \nabla_p \cdot \vec{F}(x_1,\ldots,x_p) \, \delta x_1 \cdots \delta x_p = \int_Q \nabla \cdot \vec{F} \, d^px
    \]
    in our limit. We have thus shown that
    \[
        \int_Q \nabla \cdot \vec{F} \, d^p x = \oint_{\partial Q} \vec{F} \cdot \hat{n} \, d \sigma
    \]
    for the special case of an infinitesimal box $Q$ inside $V$. Let us now sum this result over all the boxes inside $V$,
    \begin{equation} \label{eq:almostdivthm}
        \sum_{Q_i} \ \int_{Q_i} \nabla \cdot \vec{F} \, d^p x \ = \ \sum_{Q_i} \ \oint_{\partial Q_i} \vec{F} \cdot \hat{n} \, d \sigma,
    \end{equation}
    where we are concerned with the limit as the lengths of the box edges go to zero and the number of boxes in the covering approaches infinity.
    In this limit, the volume of the uncovered regions of $V$ near $\partial V$ approaches zero. Since $\vec{F}$ is continuously differentiable,
    it follows that $\nabla \cdot \vec{F}$ is continuous and thus bounded over the closed and bounded region $V$. Thus, the integral of
    $\nabla \cdot \vec{F}$ over regions in $V$ not covered by boxes approaches zero.
    The left side of \eqref{eq:almostdivthm} therefore becomes
    \begin{equation} \label{eq:divleft}
        \sum_{Q_i} \int_{Q_i} \nabla \cdot \vec{F} \, d^p x \ \longrightarrow \ \int_{V} \nabla \cdot \vec{F} \, d^p x
    \end{equation}
    in this limit. Now, in the right side of \eqref{eq:almostdivthm}, let us consider all the planes which bound the boxes $Q_i$ that are included
    in the sum. Each ``interior'' plane appears twice in the sum, once as the ``right'' side of one box and a second time as the ``left'' side of
    another box. In each of these appearances, both $\vec{F}$ and $d\sigma$ remain the same but the vector $\hat{n}$ shows up with opposite sign.
    Thus, the integral over all the interior planes vanishes. The only terms that are not canceled in the integral on the right side of
    \eqref{eq:almostdivthm} make up the integral over the ``exterior'' planes, which we write as $\partial \bigcup Q_i$. That is,
    \[
        \sum_{Q_i} \ \oint_{\partial Q_i} \vec{F} \cdot \hat{n} \, d \sigma \ = \oint_{\partial \bigcup Q_i} \vec{F} \cdot \hat{n} \, d \sigma,
    \]
    so using \eqref{eq:divleft},
    \begin{equation} \label{eq:closerdivthm}
        \int_{V} \nabla \cdot \vec{F} \, d^p x = \oint_{\partial \bigcup Q_i} \vec{F} \cdot \hat{n} \, d \sigma.
    \end{equation}
    Now we are very close to \eqref{eq:divthm}, but there is one difficulty due to the fact that the outward normal vector $\hat{n}$ for
    ${ \partial} \bigcup_i Q_i$ always points along one of
    the coordinate axes while $\hat{n}$ can point in any direction for $\partial V$. To reconcile this difference, we realize that integrating
    $\vec{F} \cdot \hat{n}$ over a closed surface gives us the volume of fluid that has passed through this surface in unit time.
    The same volume of fluid must pass through both ${ \partial} \bigcup_i Q_i$ and $\partial V$ since
    ${ \partial} \bigcup_i Q_i \to \partial V$. Therefore,
    \[
        \oint_{{ \partial} \bigcup_i Q_i} \vec{F} \cdot \hat{n} \, d \sigma = \oint_{\partial V} \vec{F} \cdot \hat{n} \, d \sigma.
    \]
    Combining this with \eqref{eq:closerdivthm} completes the proof. \qed
\end{poof}

\section{$\Delta_p$ in Spherical Coordinates} \label{sec:pdimlap}

To compute $\Delta_p$ in spherical coordinates, we could use the chain rule. This would involve converting all the derivatives with respect to $x_i$ in \eqref{eq:pLaplacian} to derivatives with respect to spherical coordinates as we did for $\Delta_2$ and $\Delta_3$ in Section \ref{sec:SepVar}. Such an undertaking would be quite messy, however, and adds little additional insight.

Another approach would be to use the general formula from differential geometry
$$
     \Delta  = {1\over \sqrt{g}} \, \partial_\mu \sqrt{g} g^{\mu\nu} \partial_\nu  ~,
$$
where $g_{\mu\nu}$ is the metric tensor of the space of interest, $g=\det[g_{\mu\nu}]$ and $g^{\mu\nu}$ the inverse of the metric tensor.
Since we have avoided this route thus far, we will not use it here either.

We can can actually use an integration trick to determine the form of the
$p$-dimensional Laplace operator $\Delta _p$ in spherical coordinates. This is the approach followed below.

\begin{figure}[h!]
    \begin{center}
        \setlength{\unitlength}{1mm}
        \begin{picture}(65,40)
            \put(0,0){\includegraphics[width=7cm]{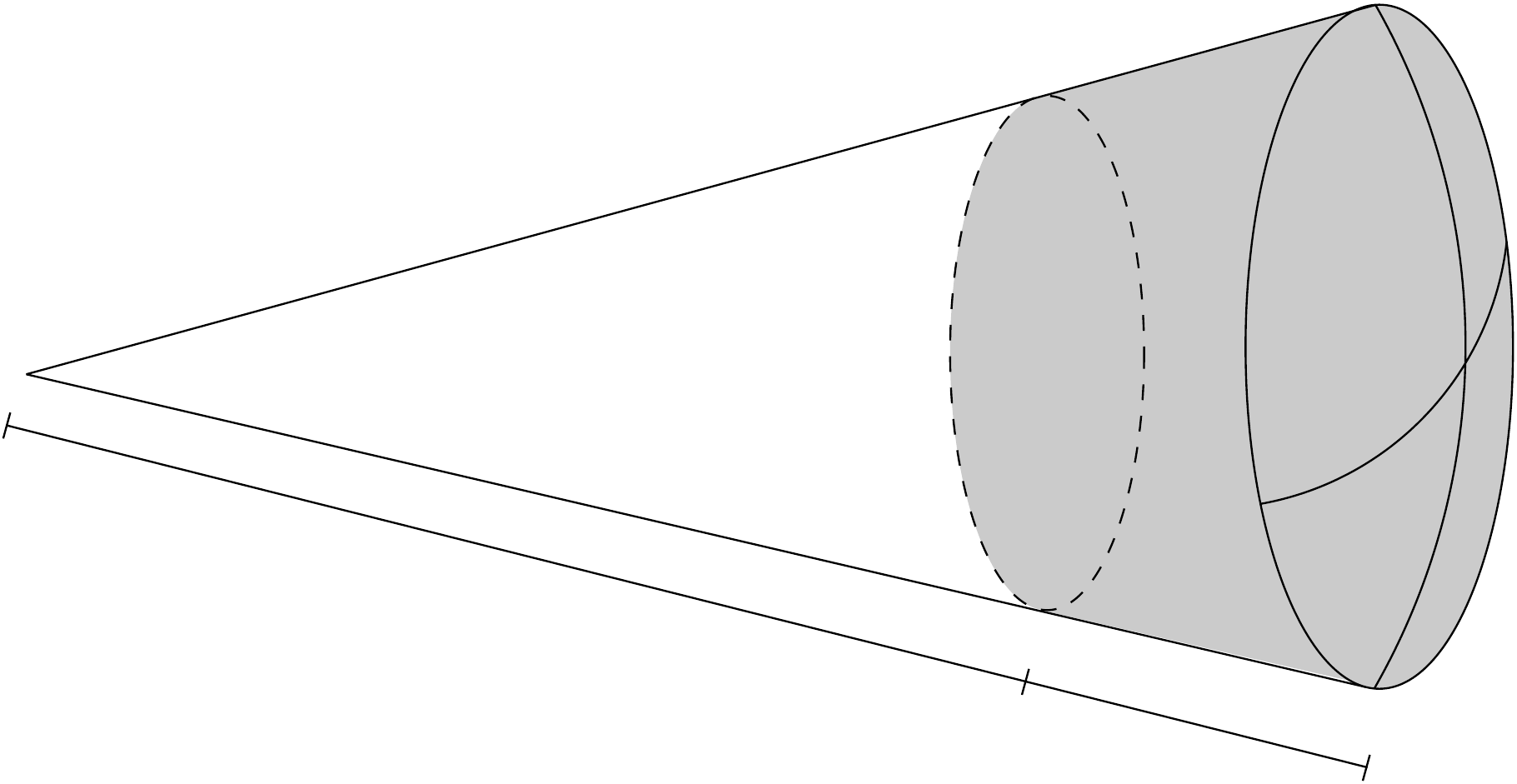}} \put(-5,18){$\mathcal O$}  \put(8,18){$\omega$} \put(52,-2){$\delta r$} \put(22,7){$r$} \put(52,36){$\mathcal D$}
        \end{picture}
    \end{center}
    \caption{A region $\mathcal D$ in $\mathbb{R}^p$.}
    \label{fig:cone}
\end{figure}

Consider the $p$-dimensional cone-like region depicted in Figure
\ref{fig:cone}. Let $\mathcal{D}$ be the shaded region (a truncated cone), which has
left and right boundaries given by portions of $S_r^{p-1}(0)$ and
$S_{r+\,\delta r}^{p-1}(0)$ respectively. We are concerned with the limit where the quantity $\delta r \to 0$. This region spans a solid
angle of $\omega$, which means the surface area of the region is a fraction $\omega / \Omega_{p-1}$ of the surface area of a complete
sphere of the same radius. Using \eqref{eq:romega}, this implies
that the surface area of the left boundary
is $\omega r^{p-1}$ while that of the right boundary is $\omega
(r+\,\delta r)^{p-1}$.

Now let $f:\mathbb{R}^p \rightarrow \mathbb{R}$ be a twice continuously differentiable
function, and use the divergence theorem in $p$ dimensions to find\footnote{We introduce here our notation for a normal derivative: ${\partial f \over \partial n} = \nabla_p f \cdot \hat{n}$.}
\[
    \underset{\mathcal{D}}{\int} \Delta_p f \, d^px = \int_{\mathcal{D}} \nabla_p \cdot (\nabla_p f) \, d^px =
    \underset{\partial \mathcal{D}}{\int} \nabla_p f \cdot \hat{n} \,
    d\sigma = \underset{\partial \mathcal{D}}{\int} {\partial f \over \partial n} \,
    d\sigma,
\]
where $\hat{n}$ is the external unit
normal vector to $\mathcal{D}$ and $d\sigma$ is a differential
element of surface area. By the \emph{mean value theorem for integration},
\[
    \int_\mathcal{D} \Delta_p f \, d^p x = \Delta_p f(x^*) \int_\mathcal{D} d^p x = \Delta_p f(x^*) \cdot \text{vol}(\mathcal D),
\]
for some $x^*$ in $\mathcal D$. But in the limit as the region of integration
becomes infinitesimally small, it does not matter which $x^*$ we use in $\mathcal D$ since $f$ is continuous, so
\[
    \underset{\mathcal{D}}{\int} \Delta_p f \, d^px \to \Delta_pf \cdot \text{vol}(\mathcal{D})
    \quad \text{as} \quad \text{vol}(\mathcal{D}) \to 0.
\]
Thus
\[
    \Delta_p f = \lim_{\text{vol}(\mathcal{D}) \rightarrow 0} \, \left[
    {1 \over \text{vol}(\mathcal{D})} \int_{\partial \mathcal{D}} \left( {\partial f \over \partial
    {n}} \right) \, d\sigma \right] \, .
\]
Note that in this limit, the volume of $\mathcal{D}$ approaches
$\omega r^{p-1}\,\delta r$. We can break up the integral in the numerator into one integral over the right bounding surface, one over the left bounding
surface, and one over the lateral surface which we will denote by $\partial \mathcal{D}'$. Since in this  limit the bounding surfaces are infinitesimally small, we can pull the integrands outside the integrals
and rewrite the above equation as
\[
    \Delta_p f = \lim_{\delta r \rightarrow 0 \atop \omega \rightarrow
    0} { {\partial f \over \partial r} \big|_{r+\,\delta r} \omega(r+\,\delta r)^{p-1}
    - {\partial f \over \partial r} \big|_r \omega r^{p-1}
    \over \omega r^{p-1} \, \delta r } +
    \lim_{\text{vol}(\mathcal{D}) \rightarrow 0}
    \left[ {\int_{\partial \mathcal{D}'} \left( {\partial f \over \partial
    {n}} \right) \, d\sigma \over \text{vol}(\mathcal{D})}\right] \, ,
\]
which becomes, using the binomial expansion,\footnote{Here, $\mathcal{O}(\delta r^2)$ denotes terms which are such that ${\text{terms} \over \delta r^2}$ remains
finite as $\delta r \to  0$. In particular, ${\text{terms} \over \delta r} \to 0$ as $\delta r \to 0$.}
\[
    \lim_{\delta r\rightarrow 0} {{\partial f \over \partial r}
    \big|_{r+\,\delta r} \left[ r^{p-1} + (p-1)r^{p-2}\,\delta r+ \mathcal{O}(\delta r^2) \right]
    - {\partial f \over \partial r} \big|_r r^{p-1} \over
    r^{p-1}\,\delta r} + {\text{\footnotesize contribution from} \atop \text{\footnotesize lateral surface,}}
\]
or,
\begin{equation} \label{eq:almostsphlap}
    \Delta_p f = \lim_{\delta r\rightarrow 0} \left[{ {\partial f \over \partial r}\big|_{r+\,\delta r} -  {\partial f \over \partial r} \big|_r
    \over \delta r} + {p-1 \over r}\,{\partial f \over \partial r}\right]+ {\text{\footnotesize contribution from} \atop \text{\footnotesize lateral surface.}}
\end{equation}
Notice that the  contribution-from-the-lateral-surface term
\[
    \lim_{\text{vol}(\mathcal{D}) \rightarrow 0} \, \left[ {1\over \text{vol}(\mathcal{D}) } \, \int_{\partial \mathcal{D}'}   {\partial f \over \partial {n}}  \, d\sigma \right]
\]
contains directional derivatives only in directions orthogonal to the radial direction. This term thus contains no derivatives with respect to $r$ and only derivatives with respect to
the angles $\phi,\theta_1,\theta_2,\ldots, \theta_{p-2}$; we will indicate it by
\[
     {1 \over r^2}     \Delta_{S^{p-1}} f ,
\]
where we determined the prefactor $1 / r^2$ through dimensional analysis. We call the operator $\Delta_{S^{p-1}}$ the \emph{spherical Laplace operator in $p-1$ dimensions}. Equation \eqref{eq:almostsphlap} thus implies the following proposition.

\begin{proposition}
\label{prop:AngLap}
  \[
    \Delta_p = {\partial^2  \over \partial r^2} + {p-1 \over r}
    \, {\partial  \over \partial r} + {1 \over r^2}
    \Delta_{S^{p-1}}.
  \]
\end{proposition}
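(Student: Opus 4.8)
The plan is to read the result directly off equation \eqref{eq:almostsphlap}, which has already assembled every ingredient; essentially all that remains is to evaluate the two limits and to name the three resulting terms. First I would treat the first limit inside the bracket. Since $f$ is assumed twice continuously differentiable, the difference quotient of the radial derivative,
\[
    \lim_{\delta r \to 0} \frac{\frac{\partial f}{\partial r}\big|_{r+\,\delta r} - \frac{\partial f}{\partial r}\big|_r}{\delta r},
\]
is by definition the second radial derivative $\frac{\partial^2 f}{\partial r^2}$. The second term in the bracket, $\frac{p-1}{r}\frac{\partial f}{\partial r}$, carries no dependence on $\delta r$ whatsoever, so it simply passes through the limit unchanged.

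Next I would handle the contribution from the lateral surface. The discussion preceding the proposition already argued, via dimensional analysis together with the observation that the normal derivative $\partial f / \partial n$ on the lateral faces of $\mathcal{D}$ points only in directions orthogonal to $\hat{r}$, that this term contains no radial derivatives and only derivatives in the angles $\phi,\theta_1,\ldots,\theta_{p-2}$. We have therefore named it $\frac{1}{r^2}\Delta_{S^{p-1}} f$. Collecting the three pieces yields
\[
    \Delta_p f = \frac{\partial^2 f}{\partial r^2} + \frac{p-1}{r}\frac{\partial f}{\partial r} + \frac{1}{r^2}\Delta_{S^{p-1}} f,
\]
and since $f$ was an arbitrary twice continuously differentiable function, this is an identity of operators, which is precisely the claim.

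The genuinely substantive work lies behind us in deriving \eqref{eq:almostsphlap}; the step here is almost purely bookkeeping. If I were to flag one point demanding care, it is the claim that the lateral-surface limit defines an honest differential operator $\Delta_{S^{p-1}}$ acting only on the angular variables and independent of the particular truncated cone $\mathcal{D}$ used in the derivation. In the present physicist-style treatment this is taken for granted from the geometry of the region (the lateral faces lie in surfaces on which $r$ is constant, so $\hat{n}$ is everywhere orthogonal to $\hat{r}$), so I would simply invoke that observation rather than attempt an independent verification of the angular operator's explicit form, which would properly belong to a separate computation in the spherical coordinates of Section \ref{sec:pdim}.
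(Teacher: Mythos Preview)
Your proposal is correct and matches the paper's approach essentially line for line: the paper likewise simply reads off the second radial derivative from the difference quotient in \eqref{eq:almostsphlap}, carries the $\frac{p-1}{r}\frac{\partial f}{\partial r}$ term through, and defines the lateral-surface contribution to be $\frac{1}{r^2}\Delta_{S^{p-1}}f$ by the same dimensional and orthogonality considerations you cite. Your caveat about the lateral term defining a genuine angular operator is also in the spirit of the paper, which treats this point at the same informal level.
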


Now that we are comfortable working in $\mathbb{R}^p$ let us move on to briefly study the subject of orthogonal polynomials which will be very
useful when we come to our main topic of discussion.

 \CleanPage

\chapter{Orthogonal Polynomials} \label{ch:pol}

This chapter provides the reader with several useful facts from
the theory of orthogonal polynomials that we will require in order
to prove some of the theorems in our main discussion. In
what follows, we will assume the reader has taken an introductory
course in linear algebra and, in particular, is familiar with
inner product spaces and the Gram-Schmidt orthogonalization
procedure. If this is not the case, we refer him or her to \cite{LinAlg:Friedberg}.

\section{Orthogonality and Expansions}\label{sec:inner}

We will deal with a set $\mathcal{P}$ where the
members are real polynomials of finite degrees in one variable defined on some interval\footnote{For now,
we allow $I$ to be finite or infinite and  open, closed, or neither.} $I$. With vector addition defined as the usual addition of
polynomials and scalar multiplication defined as ordinary
multiplication by real numbers, it is easy to check that we have,
so far, a well defined vector space.

Let us put more structure on this space by adding an inner product.
For any $p, q \in \mathcal{P}$, define a function $\langle \cdot,\cdot \rangle_w : \mathcal{P} \times
\mathcal{P} \to \mathbb{R}$ by
\begin{equation}
    \langle p, q \rangle_w \overset{\text{def}}{=} \int_I  p(x)q(x)w(x)\,dx,
    \label{eq:inn}
\end{equation}
where $w$, called a \emph{weight function}, is some positive
function defined on $I$ such that
\[
    \int_I r(x)w(x)\,dx
\]
exists and is finite for all polynomials $r \in \mathcal{P}$. Notice that, since the product
of any two polynomials is itself a polynomial, the above
requirement ensures the existence of $\langle p,q \rangle_w$ for
all polynomials $p,q$. Moreover, the function \eqref{eq:inn} is symmetric and linear in both arguments and,
since $w$ is positive, $\langle r,r \rangle_w \geq 0$ for all polynomials $r$.  Finally, since $r^2$ is continuous for
all polynomials $r$,
\[
    \langle r, r \rangle_w = \int_I r(x)^2 \, w(x) \, dx = 0 \quad
    \text{if and only if} \quad r(x) = 0 \text{ for all } x \in I.
\]
For suppose $r(x_0) \neq 0$ for some $x_0 \in I$. By
continuity, $r$ must not vanish in some neighborhood about
$x_0$, and thus the above integral will acquire a positive value.
Therefore, $\mathcal{P}$ together with the function \eqref{eq:inn}
is a well defined inner product space.
We call $\langle \cdot , \cdot \rangle_w$ the \emph{inner product with respect to the weight $w$}.

The reader should recall the following two inequalities that hold in any inner product space.
We state them here for reference without any proofs which may be found in \cite{LinAlg:Friedberg}.

\begin{proposition}[Cauchy-Schwarz Inequality]
    For any vectors $p,q$ in an inner product space $\mathcal{P}$, we have
    $|\langle p,q \rangle| \leq \|p\| \|q\|$.
\end{proposition}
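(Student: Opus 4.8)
The plan is to use the standard ``discriminant'' argument, which exploits the positivity of the inner product applied to a one-parameter family of vectors. First I would dispose of the degenerate case: if $q = 0$ then both sides vanish and the inequality holds trivially, so I may assume $q \neq 0$, which guarantees $\|q\|^2 = \langle q, q \rangle > 0$ (positivity is exactly the property established for $\langle \cdot, \cdot \rangle_w$ just above, and holds as an axiom in any inner product space).

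For the main case I would consider the real-valued function $f(t) = \langle p - tq,\, p - tq \rangle$ of a real variable $t$. Since $\langle r, r \rangle \geq 0$ for every vector $r$, we have $f(t) \geq 0$ for all $t \in \mathbb{R}$. Expanding $f$ using bilinearity and symmetry gives
\[
    f(t) = \|p\|^2 - 2t \langle p, q \rangle + t^2 \|q\|^2,
\]
a quadratic in $t$ with positive leading coefficient $\|q\|^2$.

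Because this quadratic is nonnegative for every real $t$, it has at most one real root, so its discriminant must be nonpositive:
\[
    \big( 2\langle p, q \rangle \big)^2 - 4 \|q\|^2 \|p\|^2 \leq 0.
\]
Rearranging yields $\langle p, q \rangle^2 \leq \|p\|^2 \|q\|^2$, and taking nonnegative square roots gives $|\langle p, q \rangle| \leq \|p\| \|q\|$, as claimed. Equivalently, one could bypass the discriminant by substituting the minimizing value $t = \langle p, q \rangle / \|q\|^2$ into the inequality $f(t) \geq 0$ and simplifying.

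There is really no hard step here; the argument is mechanical once the right auxiliary function has been chosen. The only point requiring a moment's care is the separate treatment of $q = 0$, since the substitution $t = \langle p, q \rangle / \|q\|^2$ is illegitimate when $\|q\| = 0$; the discriminant formulation sidesteps this, but the degenerate case should still be checked directly. I would also note that the symmetry $\langle p, q \rangle = \langle q, p \rangle$ is what lets me collect the two cross terms into the single coefficient $-2t\langle p, q \rangle$, so the argument as written relies on the real (symmetric) inner product of the present setting rather than a complex (Hermitian) one.
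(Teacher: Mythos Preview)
Your proof is correct and is the standard discriminant argument for the Cauchy--Schwarz inequality in a real inner product space. However, there is nothing to compare here: the paper does not prove this proposition at all. It explicitly states the inequality ``for reference without any proofs,'' referring the reader to a linear algebra text. So your proposal supplies a proof where the paper deliberately omits one; the argument you give is entirely appropriate for the real, symmetric inner product setting of Section~\ref{sec:inner}.
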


\begin{proposition}[Triangle Inequality]\hspace{-2mm}\footnote{This is also called the Minkowski inequality.}
    For any vectors $p,q$ in an inner product space $\mathcal{P}$, we have
    $\|p+q\| \leq \|p\| + \|q\|$.
\end{proposition}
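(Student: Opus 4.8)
The plan is to reduce the Triangle Inequality to the Cauchy-Schwarz inequality stated immediately above, using the fact that in an inner product space the norm is the one induced by the inner product, $\|v\|^2 = \langle v,v\rangle$. Since squaring is monotone on the nonnegative reals and both $\|p+q\|$ and $\|p\|+\|q\|$ are nonnegative, it suffices to establish the squared inequality $\|p+q\|^2 \le (\|p\|+\|q\|)^2$; the desired statement then follows by taking square roots.

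First I would expand the left-hand side, using the bilinearity and symmetry of $\langle\cdot,\cdot\rangle$:
\[
\|p+q\|^2 = \langle p+q,\, p+q\rangle = \langle p,p\rangle + 2\langle p,q\rangle + \langle q,q\rangle = \|p\|^2 + 2\langle p,q\rangle + \|q\|^2.
\]
Next I would bound the cross term. Since $\langle p,q\rangle \le |\langle p,q\rangle|$, the Cauchy-Schwarz inequality $|\langle p,q\rangle| \le \|p\|\,\|q\|$ gives $\langle p,q\rangle \le \|p\|\,\|q\|$. Substituting this bound yields
\[
\|p+q\|^2 \le \|p\|^2 + 2\|p\|\,\|q\| + \|q\|^2 = \big(\|p\|+\|q\|\big)^2,
\]
which is exactly the squared inequality sought.

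Finally I would take nonnegative square roots of both sides to conclude $\|p+q\| \le \|p\|+\|q\|$. The argument is entirely routine, and I expect no genuine obstacle: the only points worth stating explicitly are that passing from $\langle p,q\rangle$ to $|\langle p,q\rangle|$ can only weaken the inequality (so Cauchy-Schwarz applies cleanly to the cross term), and that the monotonicity of the square root on $[0,\infty)$ legitimizes recovering the unsquared inequality from the squared one. Both observations are trivial, so the entire proof amounts to the expansion above followed by a single invocation of Cauchy-Schwarz.
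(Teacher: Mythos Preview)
Your proof is correct and is the standard argument. Note, however, that the paper does not actually prove this proposition: it explicitly states both the Cauchy--Schwarz and Triangle inequalities ``for reference without any proofs,'' referring the reader to \cite{LinAlg:Friedberg}. So there is no proof in the paper to compare against; your argument simply fills in what the authors chose to omit.
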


Now that we have an inner product space, we can speak of
orthogonal polynomials. We call two polynomials $p, q$
\emph{orthogonal with respect to the weight\footnote{Although the weight function is important, often we tend
not to stress it and call the two polynomials simply \emph{orthogonal}. We also use the simplified
notation $\langle \cdot, \cdot \rangle$.} $w$} provided $\langle p,q \rangle_w = 0$.

Suppose now that we have a basis $\mathcal{B} =
\{\phi_n\}_{n=0}^\infty$ for $\mathcal{P}$ consisting of
orthogonal polynomials where $\phi_n$ is of degree $n$. We can
easily see that such a basis exists by beginning with a set of
monomials $\{x^n\}_{n=0}^\infty$ and, using the Gram-Schmidt process, to come up with the $\{\phi_n\}_{n=0}^\infty$:
We leave the first member $x^0=1$ of the basis as is. That is, $\phi_0=1$.
Then, from the second member $x^1=x$, we subtract its component along the first member to get
$\phi_1=x-{\langle x,1 \rangle \over \langle 1,1 \rangle}\cdot 1$, so that the second member of our new basis is orthogonal to the first.
We continue in this fashion, at each step subtracting from $x^n$ its components along the first $n-1$ orthogonalized basis members.
This creates an orthogonal basis for $\mathcal{P}$.

Now since $\mathcal{B}$ is a basis, given any polynomial $q \in
\mathcal{P}$ of degree $k$ we can write\footnote{We do not need to worry
about the convergence of the following series --- we will see in a moment that
it is actually a finite sum.}, for some scalars $c_n$,
\begin{equation}
    q = \sum_{n=0}^\infty c_n \phi_n,
    \label{eq:expans}
\end{equation}
i.e., we can expand $q$ in terms of the $\phi_n$. Since each
polynomial $\phi_n$ in $\mathcal{B}$ has degree $n$, we can
clearly write any $q$ in terms of the set $\mathcal{B}_k =
\{\phi_n\}_{n=0}^k$. Indeed, the space of all polynomials of
degree $k$ or less has dimension $k+1$, and the linearly
independent set $\mathcal{B}_k$ has $k+1$ elements. Using the
uniqueness of the expansion \eqref{eq:expans}, we can see that
\begin{equation}
    c_n = 0 \text{ for all } n> k = \deg{q}.
    \label{eq:termin}
\end{equation}
Since the coefficients are given by
\[
    c_n = {\langle q,\phi_n \rangle \over \langle \phi_n,\phi_n
    \rangle},
\]
we have the following result.

\begin{proposition}
  Given any polynomial $q$ and any set of orthogonal polynomials
  $\{\phi_n\}_{n=0}^\infty$, where $\phi_n$ has degree $n$,
  \[
    \langle \phi_n, q \rangle _w = \int_I \phi_n q \; w \, dx = 0, \text{ for all } n > \deg{q}.
  \]
  \label{fact:int0}
\end{proposition}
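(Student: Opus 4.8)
The plan is to read off the result directly from the orthogonal expansion machinery set up immediately above. Write $k = \deg q$ and recall that $\mathcal{B} = \{\phi_n\}_{n=0}^\infty$ is a basis of $\mathcal{P}$ consisting of orthogonal polynomials with $\deg \phi_n = n$. First I would expand $q$ as in \eqref{eq:expans}, $q = \sum_{n=0}^\infty c_n \phi_n$, and recall the key degree observation \eqref{eq:termin}: since $q$ has degree $k$ and each $\phi_n$ has degree exactly $n$, the polynomial $q$ already lies in the span of $\{\phi_0,\dots,\phi_k\}$, so $c_n = 0$ for every $n > k$.

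Next I would bring in the coefficient formula $c_n = \langle q,\phi_n\rangle / \langle \phi_n,\phi_n\rangle$, which is obtained by pairing the expansion \eqref{eq:expans} against $\phi_n$ and using orthogonality of the basis to kill all but one term. The decisive point is that the denominator is strictly positive: $\phi_n$ is a polynomial of degree $n$, hence a nonzero element of $\mathcal{P}$, and the inner product \eqref{eq:inn} was shown to be positive definite, so $\langle \phi_n,\phi_n\rangle > 0$. Consequently $c_n = 0$ forces the numerator $\langle q,\phi_n\rangle$ to vanish. Combining this with the degree observation of the previous paragraph gives $\langle q,\phi_n\rangle = 0$ for all $n > k = \deg q$, and symmetry of the inner product yields $\langle \phi_n, q\rangle_w = 0$, as claimed.

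A fully self-contained variant avoids even the coefficient formula: the set $\{\phi_0,\dots,\phi_k\}$ is linearly independent (distinct degrees) and has $k+1$ elements, so it is a basis of the $(k+1)$-dimensional space of polynomials of degree at most $k$. Writing $q = \sum_{j=0}^k c_j \phi_j$ and computing $\langle \phi_n, q\rangle_w = \sum_{j=0}^k c_j \langle \phi_n,\phi_j\rangle_w$ for any $n > k$, every term vanishes since $n \neq j$ for each $j \le k$, so the whole sum is zero.

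I do not expect a genuine obstacle here, since the substantive content was already assembled in the discussion preceding the statement; the proof is essentially a matter of assembling \eqref{eq:termin}, the coefficient formula, and positive definiteness into one line. The only points requiring a word of care are the justification of the coefficient formula via orthogonality and the remark that $\langle \phi_n,\phi_n\rangle \neq 0$ because $\phi_n$ is a nonzero polynomial and the weighted inner product is positive definite, both of which are immediate from the earlier development.
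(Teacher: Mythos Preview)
Your proposal is correct and follows essentially the same route as the paper: the paper's argument is precisely the discussion preceding the proposition, combining the expansion \eqref{eq:expans}, the vanishing \eqref{eq:termin} of the higher coefficients, and the formula $c_n = \langle q,\phi_n\rangle/\langle\phi_n,\phi_n\rangle$ to conclude. Your self-contained variant is a minor repackaging of the same idea and is equally valid.
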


Since we will come across expressions of the form $\langle p, p
\rangle$ frequently, let us define the \emph{norm of p with
respect to the weight $w(x)$} as
\[
    \| p \|_w \overset{\text{def}}{=} \sqrt{\langle p,p \rangle_w}.
\]

Keeping in mind the way we constructed our set of orthogonal polynomials $\{\phi_n\}_{n=0}^\infty$
(requiring $\phi_0$ to have degree 0 --- i.e., requiring $\phi_0$ to be the constant polynomial $\phi_0 = a_0$ --- and choosing each
successive $\phi_n$ to be of degree $n$ and orthogonal to the previous $n$ members) the following
result should not surprise the reader.

\begin{proposition}
    Any two orthogonal bases of polynomials $\{\phi_n\}_{n=0}^\infty$ and $\{\psi_n\}_{n=0}^\infty$ for which
    $\deg{\phi_n} = \deg{\psi_n} = n$, must be the same up to some nonzero multiplicative factors.
    \label{fact:samebases}
\end{proposition}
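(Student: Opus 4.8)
The plan is to fix an index $n$ and show directly that $\psi_n$ is a nonzero scalar multiple of $\phi_n$; doing this for every $n$ gives the result. Throughout I work with a single weight $w$, so both families are orthogonal with respect to the same inner product $\langle \cdot,\cdot\rangle_w$.

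First I would expand $\psi_n$ in the basis $\{\phi_k\}_{k=0}^\infty$. Since $\psi_n$ is a polynomial of degree $n$, Proposition \ref{fact:int0} (applied with $q=\psi_n$) tells us that $\langle \phi_k,\psi_n\rangle_w = 0$ for every $k>n$, so the expansion terminates and we may write
\[
    \psi_n = \sum_{k=0}^n c_k\,\phi_k .
\]
The crux is to show that $c_k=0$ for every $k<n$, leaving only the top term.

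To kill the lower coefficients I would fix $k$ with $0\le k<n$ and compute $\langle \psi_n,\phi_k\rangle_w$ in two ways. On one hand, pairing $\phi_k$ against the expansion above and using orthogonality of the $\phi$'s gives $\langle \psi_n,\phi_k\rangle_w = c_k\,\langle \phi_k,\phi_k\rangle_w = c_k\,\|\phi_k\|_w^2$. On the other hand, $\phi_k$ is itself a polynomial of degree $k<n$, so Proposition \ref{fact:int0} applied to the \emph{other} orthogonal basis $\{\psi_m\}$ (with $q=\phi_k$) yields $\langle \psi_n,\phi_k\rangle_w = 0$, since $n>k=\deg\phi_k$. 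Equating the two evaluations gives $c_k\,\|\phi_k\|_w^2=0$. Because $\phi_k$ is a genuine (nonzero) basis element, the inner-product-space argument given earlier in the chapter shows $\|\phi_k\|_w^2>0$, and therefore $c_k=0$. Hence $\psi_n=c_n\phi_n$, and $c_n\neq 0$ because otherwise $\psi_n$ would be the zero polynomial, contradicting $\deg\psi_n=n$.

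The only real subtlety, and the step I would be most careful about, is recognizing that Proposition \ref{fact:int0} is a statement about \emph{any} degree-graded orthogonal basis, so it can be invoked twice with the two families swapping roles: once on $\{\phi_k\}$ to truncate the expansion, and once on $\{\psi_m\}$ to make the mixed inner products vanish. It is precisely the interplay of the orthogonality of one basis against the degree-$n$ membership of the other that forces the off-diagonal coefficients to zero; everything else is bookkeeping with the terminating expansion.
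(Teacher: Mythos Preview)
Your proof is correct and follows essentially the same approach as the paper: expand $\psi_n$ in the $\phi$-basis, truncate at degree $n$, and then use Proposition~\ref{fact:int0} applied to the $\psi$-family to kill the coefficients $c_k$ for $k<n$. The paper's version is terser (it cites \eqref{eq:termin} for the truncation and invokes Proposition~\ref{fact:int0} once without spelling out which basis it is applied to), but the argument is the same.
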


\begin{proof}
    Since the $\{\phi_n\}_{n=0}^\infty$ constitute a basis, we can write any $\psi_m$
    as, using \eqref{eq:termin},
    \[
        \psi_m = \sum_{n=0}^m c_n \phi_n
    \]
    for some constants $c_n$ given by
    \[
        c_n = {1 \over \|\phi_n\|^2} \langle \psi_m, \phi_n  \rangle .
    \]
    By Proposition \ref{fact:int0}, $c_n = 0$ for all $n \neq   m$. We are thus left with
    \[
        \phi_m = c_m \psi_m,
    \]
    and we are done. \qed
\end{proof}

\section{The Recurrence Formula}

We can find a useful recursive relationship between any three
consecutive orthogonal polynomials $\phi_{n+1}, \phi_n,
\phi_{n-1}$. In what follows, we will let $k_n$, $\ell_n$ denote
the coefficients of the $x^n$ term and the $x^{n-1}$ term in
$\phi_n$, respectively.

\begin{proposition}
  Given any set of orthogonal polynomials
  $ \mathcal{B} = \{\phi_n\}_{n=0}^\infty$, where $\phi_n$ has degree $n$,
  \begin{equation}
    \phi_{n+1} - (A_nx+B_n)\phi_n + C_n\phi_{n-1} = 0,
    \label{eq:recurr}
  \end{equation}
  where
  \begin{equation}
    A_n = {k_{n+1} \over k_n}, \quad B_n = A_n \left( {\ell_{n+1} \over k_{n+1}}-{\ell_n \over k_{n}}
    \right), \quad C_n = {A_n \over A_{n-1}} \, {\| \phi_n \|^2 \over \|
    \phi_{n-1}\|^2},
    \label{eq:coeffic}
  \end{equation}
  and  $C_0 = 0$.
  \label{fact:recurr}
\end{proposition}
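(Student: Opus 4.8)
The plan is to exploit the one structural fact that makes everything work: with respect to $\langle \cdot,\cdot\rangle_w$, multiplication by $x$ is symmetric, since $\langle xp,q\rangle_w = \int_I x\,p(x)q(x)\,w(x)\,dx = \langle p,xq\rangle_w$. Combined with Proposition \ref{fact:int0}, this will force almost all of the expansion coefficients of the difference $\phi_{n+1}-A_nx\phi_n$ to vanish, leaving only the two terms that appear in \eqref{eq:recurr}.

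First I would choose $A_n = k_{n+1}/k_n$ precisely so that $\phi_{n+1}-A_nx\phi_n$ carries no $x^{n+1}$ term: the leading term of $A_nx\phi_n$ is $A_nk_nx^{n+1}=k_{n+1}x^{n+1}$, cancelling the leading term of $\phi_{n+1}$. Hence $\phi_{n+1}-A_nx\phi_n$ has degree at most $n$, and since $\mathcal B$ is an orthogonal basis I may write $\phi_{n+1}-A_nx\phi_n = \sum_{j=0}^n d_j\phi_j$ with $d_j = \langle \phi_{n+1}-A_nx\phi_n,\phi_j\rangle/\|\phi_j\|^2$.

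The heart of the argument is evaluating these $d_j$. Because $\phi_{n+1}$ is orthogonal to every polynomial of degree $\le n$ (Proposition \ref{fact:int0}), its contribution drops out, leaving $d_j = -A_n\langle x\phi_n,\phi_j\rangle/\|\phi_j\|^2 = -A_n\langle \phi_n,x\phi_j\rangle/\|\phi_j\|^2$ by the symmetry of $x$. But $x\phi_j$ has degree $j+1$, so Proposition \ref{fact:int0} gives $\langle \phi_n,x\phi_j\rangle = 0$ whenever $j+1<n$, i.e.\ for all $j\le n-2$. This collapses the sum to $\phi_{n+1}-A_nx\phi_n = d_n\phi_n + d_{n-1}\phi_{n-1}$, which already has the form of \eqref{eq:recurr} with $B_n=d_n$ and $C_n=-d_{n-1}$. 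I expect this orthogonality collapse to be the main (and essentially only) conceptual step; everything after it is bookkeeping.

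It then remains to identify the two surviving coefficients. For $B_n$ I would match the $x^n$ coefficients of both sides: the left contributes $\ell_{n+1}-A_n\ell_n$ (using that the $x^n$ coefficient of $x\phi_n$ is $\ell_n$), while the right contributes $d_nk_n$; solving and substituting $A_n=k_{n+1}/k_n$ yields exactly $B_n = A_n(\ell_{n+1}/k_{n+1}-\ell_n/k_n)$. For $C_n$ I would return to $d_{n-1} = -A_n\langle \phi_n,x\phi_{n-1}\rangle/\|\phi_{n-1}\|^2$ and note that $x\phi_{n-1}$ has degree $n$ with leading coefficient $k_{n-1}$, so its $\phi_n$-component is $k_{n-1}/k_n$; hence $\langle \phi_n,x\phi_{n-1}\rangle = (k_{n-1}/k_n)\|\phi_n\|^2$, and using $k_{n-1}/k_n = 1/A_{n-1}$ gives $C_n = -d_{n-1} = (A_n/A_{n-1})\,\|\phi_n\|^2/\|\phi_{n-1}\|^2$. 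Finally the convention $C_0=0$ simply records that for $n=0$ there is no $\phi_{-1}$ term, so the relation degenerates to $\phi_1 = (A_0x+B_0)\phi_0$, consistent with $A_{-1}$ being undefined.
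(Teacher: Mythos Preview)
Your proposal is correct and follows essentially the same route as the paper: choose $A_n$ to kill the leading term, expand the remainder in the orthogonal basis, and use the symmetry $\langle x\phi_n,\phi_j\rangle=\langle\phi_n,x\phi_j\rangle$ together with Proposition~\ref{fact:int0} to eliminate all but the $j=n$ and $j=n-1$ coefficients. The only cosmetic difference is in extracting $B_n$: the paper computes it from the inner-product formula $B_n=-A_n\langle\phi_n,x\phi_n\rangle/\|\phi_n\|^2$ by rewriting $x\phi_n$ in terms of $\phi_{n+1},\phi_n$ and lower-order terms, whereas you read it off directly by matching the $x^n$ coefficients on both sides---a slightly quicker bookkeeping choice, but the same content.
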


\begin{proof}
  From the formula given for $A_n$ in \eqref{eq:coeffic}, we see  that $\phi_{n+1} - A_n x \phi_n$ must be a polynomial of degree
  $n$ since the leading term of $\phi_{n+1}$ is canceled. Since the set $\mathcal{B}$ is a basis, we can thus write
  \[
    \phi_{n+1} - A_n x \phi_n = \sum_{j=0}^n \gamma_j \phi_j,
  \]
  using \eqref{eq:termin}. Taking the inner product of each side  with $\phi_{j'}$, where $0 \leq j' \leq n$, we find
  \[
    \gamma_{j'} = {1 \over \|\phi_{j'}\|^2} \, \left( \langle \phi_{n+1},
    \phi_{j'} \rangle - A_n \langle x \phi_n, \phi_{j'} \rangle
    \right),
  \]
  using the orthogonality of $\mathcal{B}$ and the linearity of  the inner product. From the definition of
  the inner product \eqref{eq:inn}, we see that the above equation  is the same as
  \[
    \gamma_{j'} = {1 \over \|\phi_{j'}\|^2} \, \left( \langle \phi_{n+1},
    \phi_{j'} \rangle - A_n \langle \phi_n, x \phi_{j'} \rangle
    \right).
  \]
  Since $j' \neq n+1$, the first inner product in the above
  equation vanishes. Since $\deg{(x \phi_{j'})} = \deg{(\phi_{j'})} + 1 = j'+1$, we can use Proposition \ref{fact:int0}
  to determine that the second inner product above is zero unless $j' = n-1$ or  $j'= n$. If we rename $\gamma_n = B_n$ and
  $\gamma_{n-1} = -C_n$, we have
  \[
    \phi_{n+1} - (A_n x + B_n) \phi_n + C_n \phi_{n-1} = 0,
  \]
  as in \eqref{eq:recurr}, with
  \begin{equation}
    B_n = {-A_n \over \|\phi_{n}\|^2} \, \langle \phi_n, x \phi_{n}
    \rangle, \quad C_n = {A_n \over \|\phi_{n-1}\|^2} \, \langle
    \phi_n, x \phi_{n-1} \rangle,
    \label{eq:coefform}
  \end{equation}
  and it remains to compute the coefficients $B_n, C_n$. We will
  start with $C_n$, rewriting $x \phi_{n-1} = x (k_{n-1}x^{n-1}+\cdots\,)$ as
  \begin{align*}
    k_{n-1} x^{n} + \text{lower order terms} =& \:{k_{n-1}
    \over k_n} \left(k_n x^n + \text{lower order terms} \right)\\
    =& \: {k_{n-1}\over k_n} \left( \phi_n + \text{lower order terms} \right).
  \end{align*}
  Since the lower order terms `die' in the inner product with
  $\phi_n$, \eqref{eq:coefform} implies
  \[
    C_n = {A_n \over \|\phi_{n-1}\|^2} \, {k_{n-1}\over k_n}\, \|
    \phi_n \|^2 = {A_n \over A_{n-1}} \, {\| \phi_n \|^2 \over \|
    \phi_{n-1}\|^2},
  \]
  as required. We calculate $B_n$ in a similar way, rewriting $x
  \phi_{n} = k_nx^{n+1} + \ell_nx^{n} + \cdots$ as
  \begin{align*}
    x\phi_n = & {k_n \over k_{n+1}} \left[ k_{n+1}x^{n+1} + \ell_{n+1}x^n -
    \left(\ell_{n+1} - {k_{n+1}\ell_n \over k_n} \right)x^n +
    \text{L.O.T.} \right]\\
    = & \ {k_n \over k_{n+1}} \phi_{n+1} + \left( {\ell_n \over k_{n}} - {
    \ell_{n+1} \over k_{n+1}} \right) k_nx^n + \text{L.O.T.}\\
    = & \ {k_n \over k_{n+1}} \phi_{n+1} + \left( {\ell_n \over k_{n}} - {
    \ell_{n+1} \over k_{n+1}} \right) \phi_n + \text{L.O.T.}
  \end{align*}
  where  L.O.T.   is an acronym which stands for ``lower order terms''.  Only the $\phi_n$ term will survive the inner product with
  $\phi_n$, and \eqref{eq:coefform} implies
  \[
    B_n = {-A_n \over \|\phi_{n}\|^2} \, \left( {\ell_n \over k_{n}} - {
    \ell_{n+1} \over k_{n+1}} \right) \|\phi_n\|^2 = A_n \left( {\ell_{n+1} \over k_{n+1}}-{\ell_n \over k_{n}}
    \right),
  \]
  and we are done.
\qed
\end{proof}

We see that if we know the coefficients $A_n, B_n, C_n$ then once we have chosen two consecutive members of our set of orthogonal
polynomials, the rest are determined.

\section{The Rodrigues Formula}

Until now, we spoke of the arbitrary interval $I$. For the remainder of our discussion, we will be most concerned with the
interval $[-1,1]$.

Consider the functions $\psi_n(x)$ defined on $[-1,1]$ for $n=0,1,\ldots$ as
\begin{equation}
  \psi_n(x) = {1 \over w(x)} \left({d \over dx} \right)^n \left[
  w(x)(1-x^2)^n\right].
  \label{eq:Rod}
\end{equation}
For arbitrary choice of the weight function $w$, we cannot say whether these functions are polynomials nor whether they are
orthogonal with respect to $w$. In an effort to force the $\psi_n$ to be polynomials, we can start by looking at $\psi_1$
and requiring it to be a polynomial of degree $1$,
\[
    \psi_1(x) = {w'(x) \over w(x)}(1-x^2) - 2x \overset{\text{req}}{=} ax+b.
\]
This creates a differential equation, which we can write as
\[
    {w' \over w} = {(a+2)x+b \over (1+x)(1-x)}.
\]
Integrating,
\[
    \ln{w} = \int {(a+2)x+b \over (1+x)(1-x)}\,dx + \text{const.}
\]
Decomposing the integrand into partial fractions to compute the integral, we find
\[
    \ln{w} = -{a+b+2 \over 2}
    \ln{(1-x)} + {b-a-2 \over 2} \ln{(1+x)}+ \text{const.},
\]
or,
\[
    w(x) = \text{const.} \times (1-x)^\alpha(1+x)^\beta,
\]
where $\alpha, \beta$ are the coefficients from the previous expression. Clearly, the arbitrary constant in the above
expression cancels when $w(x)$ is inserted into \eqref{eq:Rod}, so we will take it to be unity. We will also require $\alpha, \beta >
-1$ so that we can integrate any polynomial with respect to this weight. For our purposes, we will see that these restrictions on
$w(x)$ are all we need.

\begin{proposition}
  Let $w(x) = (1-x)^\alpha(1+x)^\beta$ with $\alpha, \beta > -1$.
  Then \eqref{eq:Rod} defines a set of polynomials
  $\{\psi_n\}_{n=0}^\infty$ where $\deg{\psi_n} = n$.
\end{proposition}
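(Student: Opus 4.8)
The plan is to substitute the explicit weight into the Rodrigues formula \eqref{eq:Rod} and compute the $n$-th derivative directly with the Leibniz rule. Since $1-x^2 = (1-x)(1+x)$, the first step is to observe that
\[
    w(x)(1-x^2)^n = (1-x)^{\alpha+n}(1+x)^{\beta+n},
\]
so the entire task reduces to differentiating this single product $n$ times and then dividing by $w(x) = (1-x)^\alpha(1+x)^\beta$. I would then apply the Leibniz rule to $f(x) = (1-x)^{\alpha+n}$ and $g(x) = (1+x)^{\beta+n}$. Each factor is a power of a linear function, so its derivatives are elementary; for instance
\[
    f^{(k)}(x) = (-1)^k \left[ \prod_{j=0}^{k-1}(\alpha+n-j) \right] (1-x)^{\alpha+n-k},
\]
and similarly $g^{(n-k)}$ carries a power $(1+x)^{\beta+k}$.

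The crucial observation is that in every term of the resulting sum the exponent of $(1-x)$ is $\alpha + n - k \ge \alpha$ and the exponent of $(1+x)$ is $\beta + k \ge \beta$, because $0 \le k \le n$. Dividing by $w$ therefore strips off the factors $(1-x)^\alpha$ and $(1+x)^\beta$ cleanly, leaving
\[
    \psi_n(x) = \sum_{k=0}^n \binom{n}{k} (-1)^k \left[\prod_{j=0}^{k-1}(\alpha+n-j)\right]\left[\prod_{j=0}^{n-k-1}(\beta+n-j)\right] (1-x)^{n-k}(1+x)^k.
\]
Since each summand is a product of \emph{nonnegative integer} powers of $(1-x)$ and $(1+x)$, and the total degree of $(1-x)^{n-k}(1+x)^k$ is exactly $n$, this already exhibits $\psi_n$ as a polynomial of degree at most $n$.

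It remains to check that the degree is exactly $n$, and this is the step I expect to be the real content. I would extract the coefficient of $x^n$: the leading term of $(1-x)^{n-k}(1+x)^k$ is $(-1)^{n-k}x^n$, so collecting the $(-1)^k$ with $(-1)^{n-k}$ into a common factor $(-1)^n$, the leading coefficient of $\psi_n$ becomes
\[
    (-1)^n \sum_{k=0}^n \binom{n}{k} \left[\prod_{j=0}^{k-1}(\alpha+n-j)\right]\left[\prod_{j=0}^{n-k-1}(\beta+n-j)\right].
\]
The hard part is ruling out cancellation in this sum, and here the hypotheses $\alpha, \beta > -1$ are exactly what is needed: the smallest factor in the first product is $\alpha + n - k + 1 \ge \alpha + 1 > 0$, and the smallest in the second is $\beta + k + 1 \ge \beta + 1 > 0$, so every factor---and hence every summand---is strictly positive. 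The sum is therefore positive, the leading coefficient is nonzero, and we conclude $\deg \psi_n = n$, completing the proof.
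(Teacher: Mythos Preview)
Your proof is correct and follows essentially the same approach as the paper: apply the Leibniz rule to $(1-x)^{\alpha+n}(1+x)^{\beta+n}$, observe that dividing by $w$ leaves nonnegative integer powers of $1\pm x$, and then extract the coefficient of $x^n$ as a sum of positive terms. Your argument is in fact slightly more explicit than the paper's in pinpointing exactly where the hypothesis $\alpha,\beta>-1$ is used to force every factor in the falling products to be strictly positive.
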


In what follows, we will use $(k)_\ell$ to denote the \emph{falling factorial},
\begin{align*}
    &(k)_\ell \overset{\text{def}}{=} k(k-1) \cdots (k-\ell+1) \text{ for each } \ell \in \mathbb{N},  \\
    &(k)_0 \overset{\text{def}}{=} 1,
\end{align*}
and
\[
    {n \choose k} \overset{\text{def}}{=} {n! \over k!(n-k)!}.
\]

\begin{proof}
    We can see this using the \emph{Leibnitz rule} for differentiating products,
$$
   \left( d \over dx \right)^n (fg) = \sum_{k=0}^n {n \choose k} {d^kf \over dx^k}\:
    {d^{n-k}g \over dx^{n-k}},
$$
which can be proved easily by induction.

With the weight $w(x)$ given previously, \eqref{eq:Rod} becomes
    \begin{align*}
        \psi_n(x) =& \: (1-x)^{-\alpha}(1+x)^{-\beta} \left( {d \over
        dx} \right)^n \left[ (1-x)^{\alpha+n}(1+x)^{\beta+n} \right]\\
        =& \sum_{k=0}^n {n \choose
        k} \left[ (1-x)^{-\alpha} \left({d \over dx} \right)^{\!\!k} \!\!\!(1-x)^{n+\alpha} \right]
        \left[  (1+x)^{-\beta} \left({d \over dx} \right)^{\!\!n-k} \!\!\!(1+x)^{n+\beta}
        \right]\\
        =& \sum_{k=0}^n {n \choose k} \left[ (-1)^k (n+\alpha)_k (1-x)^{n-k}
        \right] \left[ (n+\beta)_{n-k} (1+x)^{k} \right].
    \end{align*}
    We can see that $\psi_n$ is a polynomial. Let us examine the
    leading term in $\psi_n$. We see from the last line in the
    above equation that
    \begin{align*}
        \psi_n(x) =& \sum_{k=0}^n {n \choose k} (-1)^k (n+\alpha)_k (-x)^{n-k} (n+\beta)_{n-k}
        (x)^{k} \ + \ \text{L.O.T.}\\
        =& \: (-1)^n x^n \sum_{k=0}^n {n \choose k} (n+\alpha)_k
        (n+\beta)_{n-k} \ + \ \text{L.O.T.}
    \end{align*}
    Since the sum in the last line of the above equation is
    strictly positive, we see that the $x^n$ term in $\psi_n$ has
    a nonvanishing coefficient; i.e., the degree of $\psi_n$ is
    precisely $n$. \qed
\end{proof}

\begin{proposition}
  Let $w(x) = (1-x)^\alpha(1+x)^\beta$, for $\alpha, \beta > -1$.
  Then the $\psi_n$ defined in \eqref{eq:Rod} satisfy
  \begin{equation*}
    \int_{-1}^1 \psi_n(x) x^k w(x) \, dx = 0, \quad \text{for } \ 0
    \leq k < n.
  \end{equation*}
  \label{fact:Rod}
\end{proposition}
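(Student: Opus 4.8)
The plan is to exploit the Rodrigues structure \eqref{eq:Rod}, which gives $\psi_n(x)\,w(x) = \left(d/dx\right)^n u(x)$, where I set $u(x) \overset{\text{def}}{=} w(x)(1-x^2)^n = (1-x)^{n+\alpha}(1+x)^{n+\beta}$. The target integral then reads $\int_{-1}^1 x^k\, u^{(n)}(x)\,dx$, and the natural tool is repeated integration by parts, peeling one derivative off $u$ and one power off $x^k$ at each stage.

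First I would establish the key fact that all derivatives of $u$ up to order $n-1$ vanish at the endpoints $x = \pm 1$. Applying the Leibnitz rule (already recorded above) to the product $(1-x)^{n+\alpha}(1+x)^{n+\beta}$, every term of $u^{(j)}(x)$ carries a factor $(1-x)^{n+\alpha-a}(1+x)^{n+\beta-b}$ with $a+b = j$ and $a,b \geq 0$. For $j \leq n-1$ we have $a \leq n-1$ and $b \leq n-1$, so the exponents satisfy $n+\alpha-a \geq \alpha+1 > 0$ and $n+\beta-b \geq \beta+1 > 0$, where the strict positivity is exactly where the hypotheses $\alpha,\beta > -1$ enter. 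Hence each such term, and therefore $u^{(j)}(\pm 1)$, vanishes for every $0 \leq j \leq n-1$.

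With this in hand, I would integrate by parts $k$ times. At each step the boundary contribution has the form $\left[x^m\, u^{(j)}(x)\right]_{-1}^1$ with $j \leq n-1$, which vanishes by the endpoint fact, leaving
\[
    \int_{-1}^1 x^k\, u^{(n)}(x)\,dx = (-1)^k\, k! \int_{-1}^1 u^{(n-k)}(x)\,dx.
\]
Since $k < n$, the remaining order $n-k$ is at least $1$, so this last integral is itself a boundary term, $\left[u^{(n-k-1)}(x)\right]_{-1}^1$, with $n-k-1 \leq n-1$; invoking the endpoint fact one final time yields $0$, completing the argument.

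The only real obstacle is the vanishing of these boundary terms, and the proposal is careful to pin down precisely why the derivatives of order at most $n-1$ die at $x = \pm 1$: this is where the Leibnitz expansion and the constraints $\alpha,\beta > -1$ are indispensable. Everything else is routine bookkeeping in the integration-by-parts recursion.
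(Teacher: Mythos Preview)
Your proposal is correct and follows essentially the same route as the paper: write $\psi_n(x)w(x)=u^{(n)}(x)$ with $u(x)=(1-x)^{n+\alpha}(1+x)^{n+\beta}$, integrate by parts repeatedly, and observe that every boundary term involves a derivative $u^{(j)}$ with $j\le n-1$, which vanishes at $\pm 1$ because the exponents $n+\alpha-a$ and $n+\beta-b$ remain strictly positive. Your write-up is in fact a bit more explicit than the paper's in isolating exactly where the hypothesis $\alpha,\beta>-1$ is used.
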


\begin{proof}
    Let $J$ be the above integral. Inserting \eqref{eq:Rod} for   $\psi_n$ and integrating by parts $n$ times,
    \begin{align*}
      J &= \int_{-1}^1 x^k \left({d \over dx} \right)^n \left[
      w(x)(1-x^2)^n\right] \, dx\\
      &= x^k \left({d \over dx} \right)^{\!\!n-1}\!\!
      \left[w(x)(1-x^2)^n\right] \Big|_{-1}^1 - \int_{-1}^1 kx^{k-1} \left({d \over dx} \right)^{\!\!n-1}\!\!
      \left[w(x)(1-x^2)^n\right] \,dx\\
      &\ \vdots\\
      &= \sum_{\ell=0}^k (-1)^\ell (k)_\ell \, x^{k-\ell} \left({d \over dx} \right)^{\!\!n-\ell-1}\!\!
      \left[w(x)(1-x^2)^n\right] \Big|_{-1}^1.
    \end{align*}
    Upon inserting the assumed form of $w(x)$, this becomes
    \[
        J = \sum_{\ell=0}^k (-1)^\ell (k)_\ell \, x^{k-\ell} \left({d \over dx} \right)^{n-\ell-1}
      \left[(1-x)^{n+\alpha}(1+x)^{n+\beta}\right] \Big|_{-1}^1\:.
    \]
    In each term of the above sum, the factors $(1-x)^{n+\alpha}$ and $(1+x)^{n+\beta}$ keep positive exponents after being operated upon by
    $\left({d \over dx} \right)^{n-\ell-1}$ so that $J$ vanishes
    after we evaluate the sum at $-1$ and $1$. \qed
\end{proof}

Since each $\psi_n$ has degree $n$ and is orthogonal to all polynomials with degree less than $n$, we have the following result.

\begin{corollary}
    \label{cor:rod}
    The formula \eqref{eq:Rod} defines an orthogonal set of polynomials $\{\psi_n\}_{n=0}^\infty$ known as the
    \emph{Jacobi polynomials} with $\deg \psi_n = n$ for each $n$.
\end{corollary}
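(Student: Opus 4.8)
The plan is to combine the two preceding propositions, which between them already supply both of the properties the corollary asserts. The first proposition establishes that, with $w(x) = (1-x)^\alpha(1+x)^\beta$ and $\alpha, \beta > -1$, each $\psi_n$ defined by \eqref{eq:Rod} is genuinely a polynomial of degree exactly $n$; this takes care of the degree claim. It therefore remains only to verify that the family $\{\psi_n\}_{n=0}^\infty$ is mutually orthogonal with respect to the weighted inner product $\langle \cdot, \cdot \rangle_w$ on the interval $[-1,1]$.

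To do this I would fix $n \neq m$ and, invoking the symmetry of the inner product, assume without loss of generality that $m < n$. Since $\psi_m$ has degree $m$ by the first proposition, I can expand it in monomials as $\psi_m = \sum_{k=0}^m a_k x^k$ for suitable scalars $a_k$. Linearity of the inner product then gives
\[
    \langle \psi_n, \psi_m \rangle_w = \sum_{k=0}^m a_k \int_{-1}^1 \psi_n(x)\, x^k\, w(x) \, dx.
\]
Every index appearing in this sum satisfies $0 \leq k \leq m < n$, so Proposition \ref{fact:Rod} forces each integral to vanish, whence $\langle \psi_n, \psi_m \rangle_w = 0$.

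Because $n \neq m$ were arbitrary, this shows the $\psi_n$ form an orthogonal set of polynomials with $\deg \psi_n = n$, which is exactly the assertion; the identification of these as the \emph{Jacobi polynomials} is a matter of naming and requires no argument. I do not expect any genuine obstacle here, since the substantive work has already been carried out: the first proposition secured that the $\psi_n$ are polynomials of the correct degree, and Proposition \ref{fact:Rod} secured that they annihilate all lower-degree monomials under the weighted integral. The only real step in the corollary is the routine passage from orthogonality against each monomial $x^k$ with $k < n$ to orthogonality against the lower-degree polynomial $\psi_m$, accomplished by the linear expansion above.
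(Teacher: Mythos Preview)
Your proposal is correct and is essentially the same as the paper's own justification, which simply notes in one sentence that each $\psi_n$ has degree $n$ (from the first proposition) and is orthogonal to every polynomial of lower degree (from Proposition~\ref{fact:Rod}). You have merely spelled out explicitly the routine linearity step---expanding $\psi_m$ in monomials---that the paper leaves implicit.
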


We have thus found that \eqref{eq:Rod}, called the \emph{Rodrigues formula}, with an acceptable weight function $w(x)$, defines a set
of orthogonal polynomials. We are free to multiply each polynomial $\psi_n$ by an arbitrary constant without upsetting this fact. The
different classical orthogonal polynomials can be defined using the Rodrigues formula by adjusting this constant and the exponents
$\alpha, \beta$ in $w(x)$.

Now, suppose we discover that a set of polynomials $\{ \phi_n \}_{n=0}^\infty$ such that $\deg{\phi_n} = n$ is orthogonal with
respect to the weight $w(x)$. Can we conclude that these polynomials satisfy the Rodrigues formula \eqref{eq:Rod}?
Comparing Corollary \ref{cor:rod} and Proposition \ref{fact:samebases}, we can answer ``yes.''
That is, we can define the $\phi_n$ by the Rodrigues formula with
suitable multiplicative constants,
\[
    \phi_n(x) = {c_n \over w(x)} \left({d \over dx} \right)^n \left[
    w(x)(1-x^2)^n\right].
\]

\section{Approximations by Polynomials} \label{sec:approx}

The main result in this section will be the Weierstrass approximation theorem, which will allow us to use polynomials to
approximate continuous functions on closed intervals. First, we will introduce some ideas that we will use to prove this important
theorem.

Given a function $f:[0,1]\rightarrow\mathbb{R}$, we will use $B_n(x;f)$ to denote a \emph{Bernstein polynomial}, defined on the closed interval $[0,1]$ as
\[
    B_n(x;f) \overset{\text{def}}{=} \sum_{k=0}^n {n \choose k}
    f\left({k \over n}\right) x^k (1-x)^{n-k}
\]
for $n = 1,2,\ldots$. Since $f\left({k \over n}\right)$ is just a constant, these are clearly polynomials.

We will compute three special cases of the Bernstein polynomials.

\begin{lemma}
\label{Lemma:Bernstein}
For every natural number $n$,
  \begin{align}
    B_n(x;1)   &= 1, \label{eq:B1}                     \\
    B_n(x;x)   &= x, \label{eq:B2}                    \\
    B_n(x;x^2) &= x^2 + {x(1-x) \over n}. \label{eq:B3}
  \end{align}
\end{lemma}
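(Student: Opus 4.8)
The plan is to evaluate each of the three Bernstein polynomials directly from the definition, relying on the binomial theorem together with two combinatorial absorption identities. For \eqref{eq:B1}, setting $f \equiv 1$ reduces $B_n(x;1)$ to $\sum_{k=0}^n {n \choose k} x^k (1-x)^{n-k}$, which is exactly the binomial expansion of $\bigl(x + (1-x)\bigr)^n = 1$. This case requires nothing beyond recognizing the binomial theorem.

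For \eqref{eq:B2}, with $f(t) = t$ the summand carries a factor $k/n$. First I would discard the $k=0$ term, then apply the absorption identity $\frac{k}{n}{n \choose k} = {n-1 \choose k-1}$, and finally reindex by $j = k-1$. Pulling out one factor of $x$, the remaining sum is $\sum_{j=0}^{n-1} {n-1 \choose j} x^j (1-x)^{(n-1)-j}$, which is again $1$ by the binomial theorem, leaving $B_n(x;x) = x$.

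For \eqref{eq:B3} the key trick is to write $k^2 = k(k-1) + k$, splitting the sum into two pieces. The $k(k-1)$ piece uses the second-order absorption identity $k(k-1){n \choose k} = n(n-1){n-2 \choose k-2}$; after extracting $x^2$ and reindexing by $j = k-2$, the binomial theorem collapses the remaining sum, yielding $\frac{n-1}{n}x^2$. The $k$ piece is, up to the factor $1/n$, precisely the sum already evaluated in \eqref{eq:B2}, contributing $\frac{x}{n}$. Adding the two pieces and simplifying $\frac{n-1}{n}x^2 + \frac{x}{n} = x^2 + \frac{x(1-x)}{n}$ gives the result.

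None of the three computations presents a serious obstacle; the only point demanding care is the bookkeeping in the third identity — correctly handling the lower summation limits after the absorption identities shift the starting index (the $k=0$ and $k=1$ terms drop out of the $k(k-1)$ sum) and tracking which terms survive the reindexing. I would verify the two absorption identities at the outset, since both reductions hinge on them, and I would carry \eqref{eq:B2} forward explicitly so that the $k$ piece of \eqref{eq:B3} needs no repeated work.
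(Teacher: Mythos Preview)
Your proof is correct but takes a different route from the paper. For \eqref{eq:B1} both arguments coincide: it is the binomial theorem with $y=1-x$. For \eqref{eq:B2} and \eqref{eq:B3}, the paper keeps the two-variable identity $(x+y)^n=\sum_k{n\choose k}x^ky^{n-k}$, differentiates it with respect to $x$ (once, then again), multiplies by $x/n$ at each stage, and only at the end substitutes $y=1-x$. You instead work directly with the one-variable sum, using the absorption identities $\frac{k}{n}{n\choose k}={n-1\choose k-1}$ and $k(k-1){n\choose k}=n(n-1){n-2\choose k-2}$ together with the decomposition $k^2=k(k-1)+k$, then reindex and collapse via the binomial theorem. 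The differentiation approach is slightly slicker to iterate to higher moments (each new moment is one more derivative), while your combinatorial approach avoids calculus entirely and makes the structure of the coefficients explicit; either way the computations are of equal length and difficulty.
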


\begin{proof}
  By the binomial theorem,
  \begin{equation}
    (x+y)^n = \sum_{k=0}^n {n \choose k} x^k y^{n-k}.
    \label{eq:forB1}
  \end{equation}
  Substituting $y=1-x$ gives
  \begin{equation}
    1 = \sum_{k=0}^n {n \choose k} x^k (1-x)^{n-k} = B_n(x;1),
    \label{eq:Bsum}
  \end{equation}
  proving \eqref{eq:B1}. Differentiating \eqref{eq:forB1} with respect to $x$,
  \[
    n(x+y)^{n-1} = \sum_{k=0}^n {n \choose k} kx^{k-1} y^{n-k},
  \]
  and multiplying by $x/n$,
  \begin{equation}
    x(x+y)^{n-1} = \sum_{k=0}^n {n \choose k} \left({k \over
    n}\right) x^k y^{n-k}.
    \label{eq:forB2}
  \end{equation}
  Substituting $y=1-x$ gives
  \[
    x = \sum_{k=0}^n {n \choose k} \left({k \over
    n}\right) x^k (1-x)^{n-k} = B_n(x;x),
  \]
  proving \eqref{eq:B2}. Differentiating \eqref{eq:forB2} with  respect to $x$,
  \[
    (x+y)^{n-1} + (n-1)x(x+y)^{n-2} = \sum_{k=0}^n {n \choose k}
    \left({k^2 \over n}\right) x^{k-1}y^{n-k},
  \]
  and multiplying by $x / n$,
  \[
    \left(x^2+{xy \over n}\right) (x+y)^{n-2} = \sum_{k=0}^n {n
    \choose k} \left({k \over n}\right)^2 x^k y^{n-k}.
  \]
  Substituting $y=1-x$ gives
  \[
    x^2 + {x(1-x) \over n} = \sum_{k=0}^n {n
    \choose k} \left({k \over n}\right)^2 x^k (1-x)^{n-k} =
    B_n(x;x^2),
  \]
  proving \eqref{eq:B3}.
\qed
\end{proof}

We are now well equipped to prove the following result.

\begin{theorem}[Weierstrass Approximation Theorem]
\label{fact:Weie}
  Let the function $f:[a,b] \rightarrow \mathbb{R}$ be continuous,
  and let $\epsilon > 0$. Then there exists a polynomial $p(x)$
  such that $|f(x)-p(x)| < \epsilon$ for all $x$ in the interval
  $[a,b]$.
\end{theorem}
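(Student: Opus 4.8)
The plan is to prove the theorem by showing that the Bernstein polynomials $B_n(x;f)$ converge uniformly to $f$. Since these are defined on $[0,1]$ whereas $f$ is given on the general interval $[a,b]$, the first step is an affine reduction: setting $g(t) = f\big(a + t(b-a)\big)$ produces a continuous function on $[0,1]$, and any polynomial $q(t)$ approximating $g$ there yields the polynomial $p(x) = q\big((x-a)/(b-a)\big)$ approximating $f$ on $[a,b]$ to within the same tolerance. So it suffices to work on $[0,1]$ and prove that for continuous $g:[0,1]\to\mathbb{R}$ and any $\epsilon>0$, one has $|g(x) - B_n(x;g)| < \epsilon$ for all $x\in[0,1]$ once $n$ is large enough.

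The engine of the argument is a single second-moment estimate built from Lemma \ref{Lemma:Bernstein}. Combining the three identities \eqref{eq:B1}--\eqref{eq:B3} with a little algebra gives
\[
    \sum_{k=0}^n \left({k \over n} - x\right)^2 {n \choose k} x^k (1-x)^{n-k} = {x(1-x) \over n} \le {1 \over 4n},
\]
valid for all $x\in[0,1]$. Meanwhile, using \eqref{eq:B1} to write $g(x)$ as $g(x)\sum_k {n\choose k}x^k(1-x)^{n-k}$, I would express the error as
\[
    g(x) - B_n(x;g) = \sum_{k=0}^n \left[g(x) - g\!\left({k \over n}\right)\right] {n \choose k} x^k (1-x)^{n-k}.
\]

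Next I would invoke the fact that a continuous function on the compact interval $[0,1]$ is both bounded, say $|g|\le M$, and uniformly continuous, so that given $\epsilon$ there is $\delta>0$ with $|g(x)-g(y)| < \epsilon/2$ whenever $|x-y|<\delta$. The key step is to split the error sum according to whether $|k/n - x| < \delta$ or $|k/n - x| \ge \delta$. On the near terms, uniform continuity bounds each bracketed difference by $\epsilon/2$, and the corresponding weights sum to at most $1$. On the far terms, each difference is at most $2M$, while the total weight is controlled by a Chebyshev-type argument: since $|k/n-x|\ge\delta$ forces $(k/n-x)^2/\delta^2 \ge 1$, inserting this factor and extending the sum back over all $k$ lets the second-moment estimate bound the far weights by $1/(4n\delta^2)$. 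This yields $|g(x)-B_n(x;g)| \le \epsilon/2 + 2M/(4n\delta^2)$, and choosing $n$ large makes the second term smaller than $\epsilon/2$.

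I expect the main obstacle to be precisely this far-term estimate — the passage where one trades the crude bound $2M$ on the function values against the smallness of $\sum_{|k/n-x|\ge\delta}{n\choose k}x^k(1-x)^{n-k}$. Uniform continuity alone is useless here; one genuinely needs the quantitative identity above, and care is required in inserting the factor $(k/n-x)^2/\delta^2 \ge 1$ before reverting to a full sum. The point to emphasize is the \emph{uniformity} in $x$: because $\delta$ and $M$ can be chosen independently of $x$, the resulting bound on $n$ works simultaneously for every point, which is exactly what upgrades pointwise convergence to the uniform approximation the theorem asserts.
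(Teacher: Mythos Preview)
Your proposal is correct and follows essentially the same approach as the paper: affine reduction to $[0,1]$, the second-moment identity derived from Lemma~\ref{Lemma:Bernstein}, the near/far splitting of the error sum using uniform continuity and boundedness, and the Chebyshev-type insertion of $(k/n-x)^2/\delta^2$ to control the far terms. The paper's argument matches yours step for step, right down to the final bound $\epsilon/2 + M/(2n\delta^2)$ and the observation that the choice of $n$ is uniform in $x$.
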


We will show that the Bernstein polynomials, for sufficiently
large $n$, work to approximate the function $f$ to the required
accuracy.

\begin{proof}
  Let $\epsilon >0$. First, we will redefine the function $f$ so that we can work
  in the interval $[0,1]$. Using the linear transformation
  \[
    T(x) = (b-a)x +a,
  \]
  we set
  \[
    \tilde{f} = f \circ T: [0,1] \rightarrow \mathbb{R}.
  \]
  If we can find a polynomial $p$ to adequately approximate
  $\tilde{f}$, we can use the polynomial $p \circ T^{-1}: [a,b] \rightarrow \mathbb{R}$ as our
  required approximation of $f$. From here on, we will write $f$
  instead of $\tilde{f}$ for simplicity.

  Since $f$ is continuous on the closed and bounded interval $[0,1]$, we know
  that $f$ is bounded and uniformly continuous. By the definition of boundedness, there exists
  an $M$ such that
  \[
    |f(x)| < M, \text{ for every } x \in [0,1],
  \]
  which implies
  \begin{equation}
    |f(x) - f\left({k \over n}\right)| \leq \left| f(x) \right| + \left| f\left( {k \over n} \right) \right| < 2M, \text{ for all } x \in [0,1]
    \text{ and } 0 \leq k \leq n.
    \label{eq:boundM}
  \end{equation}
  By the definition of uniform continuity, there
  exists a $\delta > 0$ such that
  \begin{equation}
    \left|f(x) - f\left({k \over n}\right)\right| < {\epsilon \over 2},
    \text{ whenever } |x-{k\over n}| < \delta.
    \label{eq:unifcont}
  \end{equation}
  Now, let
  \[
    E = |f(x) - B_n(x;f)|,
  \]
  and we will estimate this error from above. Using
  \eqref{eq:B1} and the triangle inequality,
  \begin{align*}
    E &= \Big|f(x) - \sum_{k=0}^n {n \choose k} f\left({k \over n}\right)
    x^k (1-x)^{n-k} \Big|\\
    &= \Bigg| \sum_{k=0}^n {n \choose k} \left[ f(x) - f\left({k \over n}\right) \right] x^k(1-x)^{n-k}
    \Bigg|\\
    & \leq \sum_{k=0}^n {n \choose k} \Big| f(x) - f\left({k \over
    n}\right) \Big| \, x^k (1-x)^{n-k}.
  \end{align*}
  Splitting up the sum,
    \begin{align*}
        E \leq & \sum_{|x-{k\over n}| < \delta} {n \choose k} \Big| f(x) - f\left({k \over
        n}\right) \Big| \, x^k (1-x)^{n-k} \\ + & \sum_{|x-{k\over n}| \geq
        \delta} {n \choose k} \Big| f(x) - f\left({k \over
        n}\right) \Big| \, x^k (1-x)^{n-k}.
    \end{align*}
   Using \eqref{eq:boundM} and \eqref{eq:unifcont},
  \begin{align*}
    E & < {\epsilon \over 2} \sum_{|x-{k\over n}| < \delta} {n \choose k} x^k
    (1-x)^{n-k} + 2M \sum_{|x-{k\over n}| \geq
    \delta} {n \choose k} x^k (1-x)^{n-k} \\
    & \leq {\epsilon \over 2} \sum_{k=0}^n {n \choose k} x^k
    (1-x)^{n-k} + {2M \over \delta^2} \sum_{k=0}^n {n \choose k}\left(x-{k\over n}\right)^2 x^k
    (1-x)^{n-k},
  \end{align*}
  or
  \[
        E < {\epsilon \over 2} + {2M \over \delta^2} \left[ x^2
        B_n(x;1) - 2x B_n(x;x) + B_n(x;x^2) \right]
  \]
  Using Lemma \ref{Lemma:Bernstein},
  \begin{align*}
    E & < {\epsilon \over 2} + {2M \over \delta^2} \left(x^2 - 2x^2 +
    x^2 + {x(1-x) \over n} \right).
  \end{align*}
  Since the function $x(1-x)$ has a maximum value of $1/4$, we can
  write
  \[
    E < {\epsilon \over 2} + {2M \over 4n\delta^2} \leq \epsilon,
    \text{ for all } n \geq {M \over \epsilon \delta^2}.
  \]
  Thus, we see that the Bernstein polynomials do the job for large
  enough $n$, completing the proof. \qed
\end{proof}

Now that we know polynomials are a good tool for approximating
continuous functions, we ask how to find the best such
approximation. This is a result from linear algebra. For a
function $f$, we define the best approximation $p$ to be the
one that minimizes the norm of the error, i.e., for which $\|f -
p\|$ is smallest.

\begin{proposition}
  Let $f$ be a function, and let $\{\phi_k\}_{k=0}^\infty$ be an orthogonal set of polynomials with $\deg{\phi_n} = n$. The polynomial
  \begin{equation}
    p_n = \sum_{k=0}^n a_k \phi_k, \quad \text{where} \quad
    a_k = {\langle f, \phi_k \rangle \over \| \phi_k \|^2},
    \label{eq:approx}
  \end{equation}
  is the unique polynomial of degree $n$ that best approximates
  $f$, i.e., that minimizes $\|f-p_n\|$.
\end{proposition}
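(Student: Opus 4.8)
The plan is to reduce the problem to an exercise in completing the square, exploiting the orthogonality of the $\phi_k$. The crucial preliminary observation is that the set $\{\phi_k\}_{k=0}^n$ is a basis for the space of all polynomials of degree $\leq n$ (established in Section \ref{sec:inner}, cf.\ \eqref{eq:termin}), so every candidate competitor $q$ of degree $n$ can be written as $q = \sum_{k=0}^n c_k \phi_k$ for some scalars $c_k$. Thus minimizing $\|f - p_n\|$ over all degree-$n$ polynomials is the same as minimizing $\|f - \sum_k c_k \phi_k\|$ over all choices of the coefficients $c_k$.

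First I would expand the squared norm using the bilinearity of the inner product:
\[
    \Big\| f - \sum_{k=0}^n c_k \phi_k \Big\|^2 = \langle f, f \rangle - 2 \sum_{k=0}^n c_k \langle f, \phi_k \rangle + \sum_{k,j=0}^n c_k c_j \langle \phi_k, \phi_j \rangle.
\]
By orthogonality, $\langle \phi_k, \phi_j \rangle = 0$ for $k \neq j$, so the double sum collapses to $\sum_k c_k^2 \|\phi_k\|^2$. Writing $\langle f, \phi_k \rangle = a_k \|\phi_k\|^2$ from the definition of $a_k$ in \eqref{eq:approx}, the whole expression becomes $\|f\|^2 + \sum_k (c_k^2 - 2 a_k c_k)\|\phi_k\|^2$.

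Next I would complete the square in each $c_k$ separately, using $c_k^2 - 2a_k c_k = (c_k - a_k)^2 - a_k^2$, to obtain
\[
    \Big\| f - \sum_{k=0}^n c_k \phi_k \Big\|^2 = \|f\|^2 - \sum_{k=0}^n a_k^2 \|\phi_k\|^2 + \sum_{k=0}^n (c_k - a_k)^2 \|\phi_k\|^2.
\]
The first two terms on the right are independent of the choice of the $c_k$, while the final sum consists of nonnegative terms (since $\|\phi_k\|^2 > 0$). Hence the squared error is minimized exactly when the last sum vanishes, which happens if and only if $c_k = a_k$ for every $k$. This simultaneously shows that $p_n = \sum_k a_k \phi_k$ achieves the minimum and that it is the \emph{unique} minimizer, proving the proposition.

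There is no serious obstacle here; the only point requiring care is the bookkeeping in the cross terms, together with the implicit hypothesis that $\langle f, \phi_k \rangle$ and $\|f\|$ be finite so that the expansion is legitimate. If one prefers a more conceptual route, one can instead verify directly that $\langle f - p_n, \phi_j \rangle = 0$ for all $j \leq n$, so that $f - p_n$ is orthogonal to the span of $\{\phi_k\}_{k=0}^n$; then for any competitor $q$ in that span one writes $f - q = (f - p_n) + (p_n - q)$ and applies the Pythagorean theorem to conclude $\|f - q\|^2 = \|f - p_n\|^2 + \|p_n - q\|^2$, from which both minimality and uniqueness follow at once.
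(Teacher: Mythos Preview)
Your proof is correct. Your primary approach differs from the paper's: you expand $\|f-\sum_k c_k\phi_k\|^2$ directly and complete the square in each coefficient $c_k$, whereas the paper writes an arbitrary competitor as $\tilde p_n = p_n + q_n$, verifies that $\langle f-p_n, q_n\rangle = 0$, and applies the Pythagorean identity to get $\|f-\tilde p_n\|^2 = \|f-p_n\|^2 + \|q_n\|^2$. Amusingly, the ``more conceptual route'' you sketch at the end is precisely the paper's argument. The two methods are equivalent in content; your completing-the-square version has the minor advantage of producing the explicit formula for the minimum error $\|f\|^2 - \sum_k a_k^2\|\phi_k\|^2$ as a byproduct, while the paper's version makes the geometric picture (orthogonal projection onto the span of $\phi_0,\dots,\phi_n$) more transparent.
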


\begin{proof}
  Let's  choose a different polynomial $\tilde{p}_n$ of degree $n$.
  We will write $\tilde{p}_n$ as
  \[
    \tilde{p}_n = p_n + q_n, \quad \text{where} \quad
    q_n = \sum_{k=0}^n b_k\phi_k,
  \]
  for some constants $b_k$ not all zero. Then, we can write
  $\|f-\tilde{p}_n\|^2$ as
  \begin{equation*}
    \|f-p_n-q_n\|^2 = \|f-p_n\|^2 +
    \|q_n\|^2 -2\langle f-p_n, q_n \rangle.
  \end{equation*}
  But
  \[
    \langle f-p_n, q_n \rangle = \sum_{k=0}^n b_k \left( \langle f,\phi_k \rangle - \langle p_n, \phi_k \rangle
    \right)= 0,
  \]
  since each term in parentheses vanishes, as we can see from \eqref{eq:approx}. Thus, for any $\tilde p_n$,
  \[
    \|f-\tilde{p}_n\|^2 = \|f-p_n\|^2 + \|q_n\|^2 > \|f-p_n\|^2,
  \]
  the error is greater than that of $p_n$.
\qed
\end{proof}

This fact is really just a special case of a more general result
from the study of inner product spaces in linear algebra. We will
state the more general fact next. The proof is exactly the  same.

\begin{proposition}
  Let $f$ be a vector in an inner product space, and let $\{\phi_k\}_{k=0}^\infty$ be an orthogonal set of basis vectors. The
  vector
  \begin{equation*}
    p_n = \sum_{k=0}^n a_k \phi_k, \quad \text{where} \quad
    a_k = {\langle f, \phi_k \rangle \over \| \phi_k \|^2},
  \end{equation*}
  is the unique linear combination of the first $n+1$ basis vectors that best approximates
  $f$, i.e., that minimizes $\|f-p_n\|$.
  \label{fact:best}
\end{proposition}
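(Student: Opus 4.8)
The plan is to repeat the argument of the preceding proposition without any change, observing that that proof never used any property specific to polynomials---only the symmetry and bilinearity of the inner product together with the orthogonality of the $\phi_k$. Since those are precisely the hypotheses available in the general setting, the same computation carries over verbatim to any inner product space. Minimizing $\|f-p_n\|$ is equivalent to minimizing $\|f-p_n\|^2$, and I would work with the square throughout to keep the inner-product manipulations clean.

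First I would fix an arbitrary competitor $\tilde p_n$ among the linear combinations of $\phi_0,\dots,\phi_n$ and write it as $\tilde p_n = p_n + q_n$, where $q_n = \sum_{k=0}^n b_k \phi_k$ and the scalars $b_k$ are not all zero (this is exactly the condition $\tilde p_n \neq p_n$). Expanding $\|f-\tilde p_n\|^2 = \|f - p_n - q_n\|^2$ via bilinearity and symmetry gives
\[
    \|f-\tilde p_n\|^2 = \|f-p_n\|^2 + \|q_n\|^2 - 2\langle f-p_n,\, q_n\rangle.
\]

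The key step is to verify that the cross term vanishes. Writing $\langle f-p_n,\, q_n\rangle = \sum_{k=0}^n b_k\big(\langle f,\phi_k\rangle - \langle p_n,\phi_k\rangle\big)$ and using orthogonality of the $\phi_k$ to compute $\langle p_n,\phi_k\rangle = a_k\|\phi_k\|^2$, the very definition $a_k = \langle f,\phi_k\rangle/\|\phi_k\|^2$ forces each parenthesized term to be zero. Hence $\|f-\tilde p_n\|^2 = \|f-p_n\|^2 + \|q_n\|^2$, which is the Pythagorean decomposition at the heart of the result.

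Finally, since the $\phi_k$ are orthogonal they are linearly independent, so $q_n \neq 0$ gives $\|q_n\|^2 > 0$. Therefore $\|f-\tilde p_n\| > \|f-p_n\|$ for every competitor $\tilde p_n \neq p_n$, which simultaneously shows that $p_n$ minimizes the error and that it is the \emph{unique} minimizer. There is no genuine obstacle here; the only point deserving the slightest care is recording that $\|q_n\|^2$ is strictly positive rather than merely nonnegative, and this is precisely where linear independence of the orthogonal set is used.
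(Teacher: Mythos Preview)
Your proposal is correct and matches the paper's own treatment exactly: the paper simply states that the proof is the same as for the preceding proposition, and you have reproduced that argument (write $\tilde p_n = p_n + q_n$, expand, show the cross term vanishes by the choice of $a_k$, conclude strict inequality from $\|q_n\|^2>0$). Your added remark that strict positivity of $\|q_n\|^2$ uses linear independence of the orthogonal set is a small clarification beyond what the paper spells out, but otherwise the approaches are identical.
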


\section{Hilbert Space and Completeness}

Later in the main discussion, we will be interested in
expanding an arbitrary function $f$ in an infinite series of the
spherical harmonic functions. In this section, we will address the question
of when such an expansion is possible. Since we are concerned
primarily with spherical harmonics in an arbitrary number of
dimensions, we will keep the discussion general, making no
reference here to the number of variables on which the functions
depend. In what follows we will thus let $x$ denote a vector in
$\mathbb{R}^p$, $\mathcal{D}$ denote a subset of $\mathbb{R}^p$,
and $d\Omega$ denote the differential volume element of
$\mathcal{D}$.

We will restrict our attention to the space of square-integrable
functions with domain $\mathcal{D}$ with respect to the weight
$w(x)$, i.e., those functions $f: \mathcal{D} \rightarrow
\mathbb{R}$ for which the integral
\[
    \underset{\mathcal{D}}{\int} f(x)^2 w(x) \,d\Omega
\]
exists and is finite. When endowed with the usual operations of function
addition and multiplication by real numbers and paired with the
inner product
\[
    \langle f, g \rangle = \underset{\mathcal{D}}{\int}
    f(x)g(x)w(x)\,d\Omega,
\]
the set of such functions becomes an inner product space.
Such a space has an induced norm
\[
    \|f\| = \sqrt{\langle f, f \rangle} .
\]

We will now work up to a definition of a Hilbert space.

\begin{definition}
  In a normed space, a sequence $\{x_n\}_{n=0}^\infty$ is called \emph{Cauchy} provided that for any
  $\epsilon > 0$ there exists an $N$ such that
  \[
    \|x_n-x_m\|< \epsilon \text{ for all } n,m \geq N.
  \]
\end{definition}

A Cauchy sequence is always convergent. However, it is possible that its limit does not belong to the same space.

\begin{definition}
  A normed space is \emph{complete} provided every Cauchy sequence converges to an element in the space.
\end{definition}

\begin{definition}
  A complete  inner product space is called a \emph{Hilbert space}.
\end{definition}

We will state the following fact without proof. The interested
reader may consult \cite{Real:Royden} for the proof.

\begin{theorem}
  The  inner product space of square-integrable functions
  defined above is a Hilbert space.
\end{theorem}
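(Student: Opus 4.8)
The plan is to establish completeness by the classical Riesz--Fischer argument: I would take an arbitrary Cauchy sequence in this space and construct an explicit limit, then verify both that the limit is itself square-integrable and that the sequence converges to it in norm. Since the definition of a Hilbert space only adds completeness to the inner product structure already verified above, the entire content of the proof lies in producing a limit that lives in the same space rather than merely in some larger space of functions.

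First I would pass to a rapidly convergent subsequence. Given a Cauchy sequence $\{f_n\}$, for each $k$ I would use the Cauchy property to choose indices $n_1 < n_2 < \cdots$ with $\|f_{n_{k+1}} - f_{n_k}\| < 2^{-k}$, and then study the telescoping series $f_{n_1} + \sum_{k=1}^\infty (f_{n_{k+1}} - f_{n_k})$, whose partial sums are exactly the $f_{n_k}$. The key device is to control the partial sums of absolute values $g_m = |f_{n_1}| + \sum_{k=1}^m |f_{n_{k+1}} - f_{n_k}|$. By the triangle inequality the norms $\|g_m\|$ are bounded uniformly in $m$ by $\|f_{n_1}\| + \sum_k 2^{-k}$, so a monotone convergence argument applied to $g_m^2$ shows that $g_m$ increases almost everywhere (with respect to the measure $w(x)\,d\Omega$) to a limit $g$ that is finite almost everywhere and square-integrable. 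Finiteness of $g$ forces the telescoping series to converge absolutely, hence pointwise, almost everywhere to some function $f$, and dominating $|f|$ by $g$ gives $f$ square-integrable.

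Then I would upgrade pointwise convergence to norm convergence. For each fixed $k$ the tail estimate $|f - f_{n_k}| \leq \sum_{j \geq k} |f_{n_{j+1}} - f_{n_j}|$ holds almost everywhere, and passing the triangle inequality to this infinite sum (again justified by monotone convergence) yields $\|f - f_{n_k}\| \leq \sum_{j \geq k} 2^{-j} = 2^{-k+1} \to 0$. Thus the chosen subsequence converges to $f$ in norm. Finally, the elementary fact that a Cauchy sequence possessing a norm-convergent subsequence must itself converge to that same limit closes the argument, so $f_n \to f$ with $f$ in our space, and completeness follows.

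The hard part will be the almost-everywhere convergence step, and it is precisely here that I must flag a genuine subtlety: if ``square-integrable'' were interpreted via the Riemann integral, the theorem would be \emph{false}, since the space of Riemann-square-integrable functions is not complete. The argument above succeeds only because the three pillars it rests on---monotone convergence, the identification of functions that agree almost everywhere, and the completeness of the underlying measure---are exactly the apparatus of Lebesgue integration. This is why the authors sensibly state the result without proof and defer to \cite{Real:Royden}: a self-contained proof would require first developing measure theory, which lies well outside the elementary tools assumed in this chapter.
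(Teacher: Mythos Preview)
Your proposal is correct, and in fact it goes well beyond what the paper does: the paper does not prove this theorem at all. Immediately before the statement, the authors write ``We will state the following fact without proof. The interested reader may consult \cite{Real:Royden} for the proof,'' and then move on.

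Your sketch is the standard Riesz--Fischer argument and is essentially the proof one finds in Royden. The steps are right: extract a rapidly Cauchy subsequence, use monotone convergence on the telescoping absolute series to get an almost-everywhere limit $f$ dominated by a square-integrable $g$, then upgrade to norm convergence and use the Cauchy-sequence-with-convergent-subsequence lemma. Your closing remark is also on point and worth emphasizing: the theorem is genuinely false for Riemann-square-integrable functions, so the paper's choice to outsource the proof is not merely expository convenience but a recognition that the result sits outside the toolkit developed in the text. The only thing you might add for completeness is that one must first verify the space is closed under addition (so that it really is a vector space), which follows from the elementary inequality $(f+g)^2 \le 2f^2 + 2g^2$; the paper glosses over this as well.
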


We can restate the question that opened this section as follows.
Given a Hilbert space $\mathcal{H}$ and an \emph{orthonormal} set
$\{\phi_n\}_{n=0}^\infty \subseteq \mathcal{H}$ (which can be
obtained from any orthogonal set by dividing each member by its
norm so $\|\phi_n\| = 1$), when can we write any arbitrary member $f \in \mathcal{H}$ as a linear combination\footnote{A linear combination is usually assumed to contain a finite number of terms. We will not need this restriction for our purposes. Here, linear combinations can be finite or infinite sums.} of the $\phi_n$?

\begin{definition}
  An orthonormal set $\{\phi_n\}_{n=0}^\infty \subseteq \mathcal{H}$ is called \emph{complete}
  provided that for each $f \in
  \mathcal{H}$, there exist scalars $c_1, c_2, \ldots$, such that
  \begin{equation}
    \lim_{n \rightarrow \infty} \|f - \sum_{k=0}^n c_k\phi_k \| =
    0.
    \label{eq:compl}
  \end{equation}
\end{definition}

 We know from Proposition \ref{fact:best} that out of all  linear combinations, the combination
\[
    p_n = \sum_{k=0}^n \langle f, \phi_k \rangle \phi_k
\]
minimizes $\|f - p_n\|$. Notice that we have now normalized the vectors $\phi_k$ to unity:  $\|\phi_k\|=1$.
Thus, if there exist scalars $c_k$ for which the norm in \eqref{eq:compl} converges to zero, then certainly
\begin{equation}
    \lim_{n \rightarrow \infty} \|f-\sum_{k=0}^n \langle f, \phi_k \rangle
    \phi_k \| = 0,
    \label{eq:limnorm}
\end{equation}
since
\[
    \|f-\sum_{k=0}^n \langle f, \phi_k \rangle
    \phi_k \| \leq \|f - \sum_{k=0}^n c_k\phi_k \|, \text{ for every } n.
\]
Let us rewrite \eqref{eq:limnorm} by computing
\[
    \left\| f-\sum_{k=0}^n \langle f_n, \phi_k \rangle  \phi_k \right\|^2,
\]
which is the same as
\[
    \|f\|^2 -2 \Big \langle f, \sum_{k=0}^n \langle f, \phi_k \rangle \phi_k \Big \rangle +
    \Big \langle \sum_{k=0}^n  \langle f, \phi_k \rangle \phi_k , \sum_{\ell=0}^n \langle f, \phi_\ell \rangle \phi_\ell \Big \rangle.
\]
Using the linearity of the inner product, this becomes
\[
    \|f\|^2 -2 \sum_{k=0}^n \langle f, \phi_k \rangle ^2 +
    \sum_{k, \ell = 0}^n \langle f, \phi_k \rangle \langle f, \phi_\ell
    \rangle \langle \phi_k, \phi_\ell \rangle.
\]
By the orthonormality of the $\phi_n$, we have
\begin{equation}
      \left\|f-\sum_{k=0}^n \langle f_n, \phi_k \rangle
     \phi_k \right\|^2  = \|f\|^2 - \sum_{k=0}^n \langle f, \phi_k \rangle ^2.
\label{eq:preParseval}
\end{equation}

We note that since the norm-squared is non-negative, then
\[
    \|f\|^2 - \sum_{k=0}^n \langle f,\phi_k \rangle ^2 \geq 0,
\]
or,
\begin{equation}
    \sum_{k=0}^n \langle f,\phi_k \rangle ^2 \leq \|f\|^2, \text{ for every } n.
    \label{eq:BesIn}
\end{equation}
This is known as \emph{Bessel's inequality} and tells us that the sum $\sum_{k=0}^\infty \langle f, \phi_k \rangle ^2$
converges.  If the $\phi_k$ form a complete set, $\|f-\sum_{k=0}^n \langle
f_n, \phi_k \rangle \phi_k \|^2$ must converge to zero as $n
\rightarrow \infty$. In this case, from equation \eqref{eq:preParseval}, it follows that Bessel's inequality becomes an equality:
\[
    \sum_{k=0}^\infty \langle f,\phi_k \rangle ^2 = \|f\|^2,
\]
known as \emph{Parseval's equality}. We have thus arrived at the following conclusion.

\begin{proposition}
An orthonormal set $\{\phi_n\}_{n=0}^\infty \subseteq \mathcal{H}$
is complete if and only if Parseval's equality holds for each
$f \in \mathcal{H}$. \label{fact:pars}
\end{proposition}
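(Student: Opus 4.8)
The plan is to observe that this proposition follows almost immediately from equation \eqref{eq:preParseval}, which the preceding discussion has already established. That identity states that the squared approximation error satisfies
\[
    \left\|f-\sum_{k=0}^n \langle f, \phi_k \rangle \phi_k \right\|^2 = \|f\|^2 - \sum_{k=0}^n \langle f, \phi_k \rangle ^2,
\]
so the entire argument reduces to letting $n \to \infty$ and reading off the equivalence between the vanishing of the left side and the attainment of Parseval's equality on the right.

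For the forward direction I would suppose the set $\{\phi_n\}_{n=0}^\infty$ is complete. By definition there exist scalars $c_k$ for which $\|f - \sum_{k=0}^n c_k \phi_k\| \to 0$; the best-approximation result (Proposition \ref{fact:best}) then guarantees that the Fourier partial sums built from the coefficients $\langle f, \phi_k \rangle$ do at least as well, so $\|f - \sum_{k=0}^n \langle f, \phi_k \rangle \phi_k\| \to 0$ as well. Feeding this into the left side of the displayed identity forces $\sum_{k=0}^n \langle f, \phi_k \rangle^2 \to \|f\|^2$, which is Parseval's equality. For the converse I would instead assume Parseval's equality holds for every $f \in \mathcal{H}$; then the right side of the identity tends to zero, so the Fourier partial sums converge to $f$ in norm. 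Because these partial sums are linear combinations of the $\phi_k$, they exhibit the scalars required by the definition of completeness, namely $c_k = \langle f, \phi_k \rangle$.

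Since equation \eqref{eq:preParseval} carries all the analytic content, there is no genuine obstacle to surmount. The one point deserving care lies in the forward direction: the definition of completeness furnishes \emph{arbitrary} approximating scalars $c_k$, not the Fourier coefficients themselves, so one must not simply assume that the canonical partial sums converge. The bridge is precisely Proposition \ref{fact:best}, which shows the Fourier partial sums minimize the error among all such linear combinations and therefore inherit the convergence to zero.
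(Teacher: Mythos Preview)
Your proposal is correct and mirrors the paper's own argument: the paper derives \eqref{eq:preParseval}, invokes Proposition~\ref{fact:best} to pass from arbitrary approximating scalars to the Fourier coefficients, and then reads off the equivalence by letting $n\to\infty$, stating the proposition as the conclusion of that discussion. Your write-up is slightly more explicit about the converse direction, but the strategy is identical.
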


We give one more definition.

\begin{definition}
  A set $\{\phi_n\}_{n=0}^\infty$ is \emph{closed}
  provided
  \[
    \langle f, \phi_n \rangle = 0 \text{ for every } n \qquad \text{implies} \qquad f=0.
  \]
\end{definition}

Now we are ready to prove the main result of this section.

\begin{theorem}
\label{fact:Hilb}
    The orthonormal set $\{\phi_n\}_{n=0}^\infty \subseteq \mathcal{H}$ is complete if and only if it is closed.
\end{theorem}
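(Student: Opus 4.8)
The plan is to prove the two implications separately, drawing on the machinery already assembled in this section: Bessel's inequality \eqref{eq:BesIn}, Parseval's characterization (Proposition \ref{fact:pars}), and the fact that the function space in question is a Hilbert space.

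First I would dispatch the easy direction, that completeness implies closedness. Assuming the set is complete, suppose $\langle f, \phi_n \rangle = 0$ for every $n$. By Proposition \ref{fact:pars}, completeness yields Parseval's equality $\sum_{k=0}^\infty \langle f, \phi_k \rangle^2 = \|f\|^2$ for this particular $f$. Since every term on the left vanishes, we get $\|f\|^2 = 0$, and hence $f = 0$. That is exactly the closedness condition.

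The harder direction is that closedness implies completeness, and this is where the Hilbert-space structure carries the argument. Given any $f \in \mathcal{H}$, I would form the partial sums $p_n = \sum_{k=0}^n \langle f, \phi_k \rangle \phi_k$ and aim to show $p_n \to f$. The first step is to prove $\{p_n\}$ is Cauchy: for $m > n$, orthonormality gives $\|p_m - p_n\|^2 = \sum_{k=n+1}^m \langle f, \phi_k \rangle^2$, and Bessel's inequality \eqref{eq:BesIn} guarantees that $\sum_k \langle f, \phi_k \rangle^2$ converges, so this tail tends to $0$. Because $\mathcal{H}$ is complete, the Cauchy sequence $\{p_n\}$ converges to some $g \in \mathcal{H}$.

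It then remains to identify $g$ with $f$, and this is the crux where closedness finally enters. I would fix $j$ and compute $\langle f - g, \phi_j \rangle$: by the Cauchy-Schwarz inequality, $|\langle g - p_n, \phi_j \rangle| \leq \|g - p_n\| \to 0$, so $\langle g, \phi_j \rangle = \lim_n \langle p_n, \phi_j \rangle = \langle f, \phi_j \rangle$, the last equality holding because $\langle p_n, \phi_j \rangle = \langle f, \phi_j \rangle$ once $n \geq j$ by orthonormality. Hence $\langle f - g, \phi_j \rangle = 0$ for all $j$, and closedness forces $f - g = 0$. Thus $p_n \to f$, exhibiting the scalars $c_k = \langle f, \phi_k \rangle$ demanded by completeness. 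The main obstacle is precisely this middle portion: one cannot directly assert that $p_n$ converges to $f$, but must first extract a limit $g$ from the completeness of $\mathcal{H}$ and only afterward invoke closedness to conclude $g = f$.
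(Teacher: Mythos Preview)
Your proof is correct and follows essentially the same route as the paper's: the only cosmetic difference is that the paper works with the remainders $g_n = f - p_n$ and shows $g_n \to 0$, whereas you work with the partial sums $p_n$ and show $p_n \to f$. The key steps---Bessel's inequality for the Cauchy property, completeness of $\mathcal{H}$ to extract a limit, Cauchy--Schwarz to pass inner products through the limit, and closedness to identify the limit---are identical in both arguments.
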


\begin{proof}(Sufficient)
  Let $\{\phi_n\}_{n=0}^\infty \subset \mathcal{H}$ constitute a closed orthonormal set
  and $f \in \mathcal{H}$. We will show that
  \[
    \lim_{n \rightarrow \infty} \left( \|f\|^2 - \sum_{k=0}^n \langle f, \phi_k \rangle^2
    \right) = 0,
  \]
  since this guarantees completeness by Proposition \ref{fact:pars}.

  Define $g_n = f - \sum_{k=0}^n \langle f, \phi_k \rangle
  \phi_k$, and notice that $\{g_n\}_{n=0}^\infty$ is a Cauchy
  sequence since, if $n>m$,
  \begin{align*}
    \|g_n-g_m\|^2 &= \Big \| \sum_{k=m+1}^n \langle f , \phi_k
    \rangle \phi_k \Big \|^2\\ &= \sum_{k, \ell=m+1}^n \langle f,
    \phi_k \rangle \langle f, \phi_\ell \rangle \langle \phi_k,
    \phi_\ell \rangle\\ &= \sum_{k=m+1}^n \langle f, \phi_k
    \rangle^2,
  \end{align*}
  which can be made arbitrarily small for large enough $n,m$ since
  the series $\sum_{k=0}^\infty \langle f, \phi_k
  \rangle^2$ converges, by \eqref{eq:BesIn}. Since
  $\mathcal{H}$ is complete by definition, this Cauchy sequence
  must converge to some $g \in \mathcal{H}$, i.e.,
  \begin{equation}
    \lim_{n \rightarrow \infty} \|g_n - g \| = 0.
    \label{eq:cauchyseq}
  \end{equation}
  Now fix $j$ and take $n \in \mathbb{N}_0$ such that $n>j$.
  By the Cauchy-Schwartz inequality
  \[
    |\langle g, \phi_j \rangle| = | \langle g_n - g, \phi_j \rangle
    | \leq \|g_n-g\|\|\phi_j\| = \|g_n-g\|.
  \]
  Since this holds for any $n > j$, we can use
  \eqref{eq:cauchyseq} to conclude
  \[
    |\langle g,\phi_j\rangle | \leq
    \lim_{n \rightarrow \infty} \|g_n-g\| = 0,
  \]
  which implies that for arbitrary $j$,
  $\langle g, \phi_j \rangle = 0$. Since we assumed the $\phi_j$
  constitute a closed set, we have $g = 0$. Therefore,
  \[
    \lim_{n \rightarrow \infty} \left( \|f\|^2 - \sum_{k=0}^n \langle f, \phi_k \rangle^2
    \right)= \lim_{n \rightarrow \infty} \|g_n \|^2 = \lim_{n \rightarrow \infty} \|g_n-g
    \|^2= 0.
  \]

  (Necessary) Suppose $\{ \phi_n \}_{n=0}^\infty$ is complete but not closed.
  Then there exists a function $f \neq 0$ such that $\langle f, \phi_n \rangle = 0$ for every $n$. Then
  \[
    \lim_{n\to\infty} \left( \|f\|^2 - \sum_{k=0}^n \langle f,\phi_k \rangle^2 \right) =
    \|f\|^2 \neq 0.
  \]
  So, by Proposition \ref{fact:pars}, $\{\phi_n\}_{n=0}^\infty$ is not complete --- a contradiction. This proves that
  if an orthonormal set in $\mathcal{H}$ is closed, then it is complete.
\qed
\end{proof}

In Chapter \ref{ch:sph}, we will use this result to show that the
set of spherical harmonics form a complete set in the space of square-integrable functions
by showing that it is closed.

 \CleanPage

\chapter{Spherical Harmonics in $p$ Dimensions} \label{ch:sph}

We will begin by developing some facts about a special kind of
polynomials. We will then define a spherical harmonic to be one of
these polynomials with a restricted domain, as hinted at in
Section \ref{sec:SepVar}. After discussing some properties of
spherical harmonics, we will introduce the Legendre polynomials.
And once we have produced a considerable number of results, we will move on
to an application of the material developed  to
boundary value problems.

\section{Harmonic Homogeneous Polynomials} \label{sec:harmhom}

\begin{definition}
A polynomial $H_n(x_1, x_2, \ldots, x_p)$ is \emph{homogeneous of degree $n$} 
in the $p$ variables $x_1, x_2, \ldots, x_p$ provided
 \[
    H_n(tx_1, tx_2, \ldots, tx_p) = t^nH_n(x_1, x_2, \ldots,
    x_p)~.
 \]
\end{definition}

In the definition of a homogeneous polynomial, let's set $u_i= t x_i$, for all $i$ and differentiate the defining equation with respect to $t$:
$$
    \sum_{i=1}^p {\partial H_n(u_1,u_2,\dots,u_p)\over \partial u_i} \, {du_i\over dt}  =   n \, t^{n-1} \, H_n(x_1,x_2,\dots,x_p) ,
$$
or
$$
    \sum_{i=1}^p {\partial H_n(u_1,u_2,\dots,u_p)\over \partial u_i} \,   x_i  =   n \, t^{n-1} \, H_n(x_1,x_2,\dots,x_p) .
$$
Finally, we set $t=1$ to find the following functional equation satisfied by the homogeneous polynomial,
\begin{equation}
    \sum_{i=1}^p {\partial H_n(x_1,x_2,\dots,x_p)\over \partial x_i} \, x_i  =   n \,  H_n(x_1,x_2,\dots,x_p)~,
\label{eq:Euler}
\end{equation}
known as \emph{Euler's equation}.

The following calculation will be useful in the counting of linearly independent homogeneous polynomials.

\begin{lemma}
For $0 < |r| < 1$,
    \begin{equation}
     {1\over (1-r)^p} = \sum_{j=0}^\infty {(p+j-1)! \over  j!(p-1)!} \, r^j \, .
    \end{equation}
    \label{lem:count}
\end{lemma}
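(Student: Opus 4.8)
The plan is to prove this by induction on the (positive integer) dimension $p$, using the familiar geometric series as the base case and termwise differentiation to pass from $p$ to $p+1$. This keeps the argument entirely within the elementary calculus already assumed of the reader.

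For the base case $p=1$, observe that $\frac{(1+j-1)!}{j!\,0!} = 1$, so the claimed identity reduces to the geometric series $\frac{1}{1-r} = \sum_{j=0}^\infty r^j$, valid for $|r| < 1$. For the inductive step, I would suppose the formula holds for some $p \ge 1$ and differentiate both sides with respect to $r$. The left-hand side becomes $\frac{p}{(1-r)^{p+1}}$. Differentiating the right-hand side term by term and reindexing via $j \mapsto j+1$ gives $\sum_{j=0}^\infty \frac{(p+j)!}{j!\,(p-1)!}\, r^{j}$. Dividing through by $p$ and using $\frac{(p+j)!}{p\,(p-1)!} = \frac{(p+j)!}{p!}$, the right-hand side becomes $\sum_{j=0}^\infty \frac{(p+j)!}{j!\,p!}\, r^{j}$, which is precisely the asserted identity with $p$ replaced by $p+1$, since $((p+1)+j-1)! = (p+j)!$ and $((p+1)-1)! = p!$. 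This closes the induction.

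The only genuine subtlety --- the step I would state explicitly rather than grind through --- is the justification for differentiating the infinite series term by term. This rests on the standard theorem that a power series is infinitely differentiable throughout the interior of its interval of convergence, with the derivative obtained termwise. I would first confirm that the radius of convergence of the right-hand side is $1$ using the ratio test (the ratio of consecutive coefficients times $r$ tends to $r$ as $j \to \infty$), so that all the manipulations above are legitimate for every $r$ with $|r| < 1$.

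An alternative route, avoiding induction entirely, is to compute the Maclaurin series of $(1-r)^{-p}$ directly. The $j$-th derivative of $(1-r)^{-p}$ is $p(p+1)\cdots(p+j-1)(1-r)^{-p-j}$, so the $j$-th Taylor coefficient equals $\frac{p(p+1)\cdots(p+j-1)}{j!} = \frac{(p+j-1)!}{j!\,(p-1)!}$, and convergence for $|r| < 1$ follows again from the ratio test. Either approach works; I would present the inductive one as the main proof because it requires no appeal to Taylor's theorem with remainder and makes the role of the geometric series transparent.
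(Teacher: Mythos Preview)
Your proof is correct, but the paper takes a genuinely different route. Rather than induction and differentiation, the paper expands $(1-r)^{-p}$ as a product of $p$ copies of the geometric series and identifies the coefficient of $r^j$ in the resulting Cauchy product combinatorially: choosing one term from each factor so that the exponents sum to $j$ is the same as distributing $j$ indistinguishable balls among $p$ boxes, which a stars-and-bars argument counts as $\binom{p+j-1}{j}$. Your inductive argument is arguably tidier analytically and makes convergence explicit, but the paper's combinatorial version pays off immediately afterward: the very same stars-and-bars count is reused verbatim in the second proof of Proposition~\ref{thm:K} to enumerate monomials of degree $n$ in $p$ variables, so the lemma and its application share a single idea. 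Your approach is self-contained but does not supply that combinatorial interpretation.
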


\begin{proof}
We will use a counting trick to prove this. Since $0 < |r| < 1$, we can write $(1-r)^{-p}$ in terms of a product of geometric series, i.e.,
\begin{align*}
    (1-r)^{-p}  &= \underset{p \text{ times}}{\underbrace{\left({1 \over 1-r}\right) \left({1 \over 1-r}\right) \cdots \left({1 \over 1-r}\right)}} \\
                     &= \underset{p \text{ times}}{\underbrace{\left( \sum_{n=0}^\infty r^n \right)
                           \left( \sum_{n=0}^\infty r^n \right) \cdots \left( \sum_{n=0}^\infty r^n \right)}}.
\end{align*}
We will compute the \emph{Cauchy product} of the $p$ infinite series. The
result will be an infinite series including a constant term and
all positive integer powers of $r$,
\[
    (1-r)^{-p} = \sum_{j=0}^\infty c_j r^j .
\]
It remains to compute the coefficients $c_j$. To determine each $c_j$, we must compute how many $r^j$'s are
produced in the Cauchy product, i.e., in how many different ways we produce an $r^j$ in the multiplication.

Note that this computation is equivalent to asking, ``In how many
different ways can we place $j$ indistinguishable balls into $p$
boxes?'' The $p$ boxes correspond to the $p$ series in the
product, and choosing to place $k<j$ balls into a certain box
corresponds to choosing the $r^k$ term in that series when
computing a term of the Cauchy product.

Let us use a diagram to assist in this calculation. We will use a
vertical line to denote a division between two boxes and a dot to
denote a ball. Each configuration of the $j$ balls in $p$
boxes can thus be represented by a string of $j$ dots and $p-1$
lines (since $p$ boxes require only $p-1$ divisions). For example,
\[
\bullet \bullet | \bullet | \bullet | \ | \bullet \bullet \, |
\bullet | \ | \bullet
\]
represents one configuration of $j=8$ balls in $p=8$ boxes.

Now, the number of
ways to arrange $j$ indistinguishable balls in $p$ boxes is the
same as the number of distinct arrangements of $j$ dots and $p-1$
lines. This is given by ``$p-1+j$ choose $j$,'' i.e.,
\[
    c_j = {p-1+j \choose j} = {(p-1+j)! \over j!(p-1)!},
\]
and the lemma is proved. \qed
\end{proof}

\begin{proposition}
If $K(p,n)$ denotes the number of linearly independent homogeneous
polynomials of degree $n$ in $p$ variables, then
\[
    K(p,n) = {(p+n-1)! \over n!(p-1)!}~.
\]
\label{thm:K}
\end{proposition}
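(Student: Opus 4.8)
The plan is to reduce the counting of linearly independent homogeneous polynomials to the combinatorial result already established in Lemma \ref{lem:count}. The key observation is that a homogeneous polynomial of degree $n$ in the variables $x_1, \ldots, x_p$ is precisely a linear combination of monomials $x_1^{a_1} x_2^{a_2} \cdots x_p^{a_p}$ whose exponents $a_i$ are nonnegative integers summing to $n$. Thus my first step is to argue that these monomials form a basis for the space of homogeneous polynomials of degree $n$, so that $K(p,n)$ equals the number of such monomials.

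To establish this, I would note that every homogeneous polynomial of degree $n$ is by definition a linear combination of such monomials (so they span the space), and that distinct monomials are linearly independent --- if a linear combination of distinct monomials equals the zero polynomial, then all of its coefficients must vanish. Hence the collection $\{x_1^{a_1} \cdots x_p^{a_p} : a_i \geq 0, \ a_1 + \cdots + a_p = n\}$ is a basis, and counting linearly independent homogeneous polynomials reduces to counting these monomials.

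The next step is to recognize that counting these monomials is exactly the problem of choosing nonnegative integers $(a_1, \ldots, a_p)$ with $a_1 + \cdots + a_p = n$, which is the same balls-in-boxes problem solved in the proof of Lemma \ref{lem:count}: we distribute $n$ indistinguishable balls (the total degree) among $p$ boxes (the variables), so that box $i$ receives $a_i$ balls. By Lemma \ref{lem:count} with $j = n$, the number of such distributions is the coefficient of $r^n$ in $(1-r)^{-p}$, namely
\[
    K(p,n) = {p+n-1 \choose n} = {(p+n-1)! \over n!(p-1)!},
\]
as claimed.

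The main obstacle --- though a mild one --- is setting up the correspondence between monomials and ball-distributions cleanly and invoking Lemma \ref{lem:count} with the correct substitution; the linear independence of the monomials is a standard fact requiring only a brief justification.
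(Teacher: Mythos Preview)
Your proposal is correct and matches the paper's second proof (``Proof 2'') almost exactly: you identify the monomials of total degree $n$ as a basis for the space of homogeneous polynomials, then count them via the balls-in-boxes argument, which is precisely what the paper does. The only cosmetic difference is that you invoke Lemma~\ref{lem:count} to read off the count, whereas the paper re-does the stars-and-bars computation in place; the paper also offers an alternative first proof via the recursion $K(p,n)=\sum_{j=0}^n K(p-1,j)$ and a generating function, which you do not need.
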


We will give two proofs of this claim. The first uses a recursive relation obeyed by $K(p,n)$, while the second employs another counting trick.

\begin{proof}[Proof 1]
Let $H_n(x_1, x_2, \ldots, x_p)$ be a
homogeneous polynomial of degree $n$ in its $p$ variables. Notice
that $H_n$ is a polynomial in $x_p$ of degree at most $n$. For if
$H_n$ contained a power of $x_p$ greater than $n$, the polynomial
could not be homogenous of degree $n$, since $H_n(\ldots, tx_p)$
would contain a power of $t$ greater than $n$. Thus we can write,
\begin{equation}
    H_n(x_1, x_2, \ldots, x_p) = \sum_{j=0}^n x_p^j \, h_{n-j}(x_1, x_2,
    \ldots, x_{p-1}),
    \label{eq:hom}
\end{equation}
where the $h_{n-j}$ are polynomials. Moreover, notice that the
$h_{n-j}$ must be homogeneous of degree $n-j$ in their $p-1$
variables. Indeed, using the homogeneity of $H_n$,
 \[
    t^n \sum_{j=0}^n x_p^j h_{n-j}(x_1, \ldots, x_{p-1}) = H_n(tx_1,
    \ldots, tx_p) = \sum_{j=0}^n (tx_p)^j h_{n-j}(tx_1, \ldots,
    tx_{p-1}),
 \]
which implies that
 \[
    \sum_{j=0}^n x_p^j \left[ t^{n-j}h_{n-j}(x_1, \ldots,
    x_{p-1}) - h_{n-j}(tx_1, \ldots, tx_{p-1}) \right] = 0.
 \]

The expression in brackets must vanish by the linear
independence of the $x_p^j$. Each $h_{n-j}$ can be written in
terms of a basis of $K(p-1, n-j)$ homogeneous polynomials of
degree $n-j$ in $p-1$ variables, and thus $H_n$ can be written in
terms of a basis of
 \[
    K(p,n) = \sum_{j=0}^n K(p-1, n-j) = \sum_{j=0}^n K(p-1, j)
 \]
linearly independent elements. We have found a recursive relation
that the $K(p,n)$ must satisfy. Now for some $0 < |r| < 1$, let
\begin{equation}
    G(p) = \sum_{n=0}^\infty r^nK(p,n).
    \label{eq:Gen}
\end{equation}
Then,
\[
    G(p) = \sum_{n=0}^\infty r^n \sum_{j=0}^n K(p-1, j) = \sum_{j=0}^\infty K(p-1, j)
    \sum_{n=j}^\infty r^n,
\]
or
\begin{align*}
    G(p) =& \sum_{j=0}^\infty K(p-1, j) r^j
    \sum_{n=0}^\infty r^n = \, {1 \over 1-r} \, \sum_{j=0}^\infty r^j K(p-1,j),
\end{align*}
so
\[
    G(p) = {G(p-1) \over 1-r}.
\]
Using an inductive argument, we can show that
\[
    G(p) = {G(1) \over (1-r)^{p-1}}\, .
\]
By noticing that $K(1,n) = 1$, since every homogeneous polynomial
of degree $n$ in one variable can be written as $cx_1^n$, we see
that
\[
    G(1) = \sum_{j=0}^\infty r^n = {1 \over 1-r}\, .
\]
Thus,
\[
    G(p) = (1-r)^{-p} = \sum_{n=0}^\infty {(p+n-1)! \over
    n!(p-1)!} \, r^n \, ,
\]
by the above lemma. Comparing this to \eqref{eq:Gen} then proves the theorem. \qed
\end{proof}

\begin{proof}[Proof 2]
  Using reasoning similar to that used in the above proof, we see
  that every \emph{monomial} (i.e., product of variables) in an $n$-th degree homogeneous polynomial must
  be of degree $n$. For example, a fourth degree homogeneous polynomial in the
  variables $x,y$ can have an $x^2y^2$ but not an $x^2y$ term.

  To uniquely determine such a polynomial, we must give
  the coefficient of every possible $n$-th degree monomial. So to find
  $K(p,n)$, we must find out how many possible $n$-th degree monomials
  there are in $p$ variables. But this is equivalent to asking, ``In how
  many ways can we place $n$ indistinguishable balls into $p$ boxes?'' The $p$ boxes
  represent the $p$ variables from which we can choose, and
  placing $j<n$ balls into a certain box corresponds to choosing to
  raise that variable to the $j$ power. For example, using the same notation as
  in the proof of Lemma \ref{lem:count}, the string
  \[
    \bullet \bullet | \bullet | \bullet | \
  \]
  could represent the $w^2x^1y^1z^0$ term in a fourth-degree homogeneous polynomial. As in the above lemma, the number
  of such arrangements is
  \[
    K(p,n) = {n + p -1 \choose n} = {(n+p-1)! \over n!(p-1)!},
  \]
  proving the theorem. \qed
\end{proof}

\begin{definition}
A polynomial $q(x_1, x_2, \ldots, x_p)$ is \emph{harmonic}
provided
\begin{equation}
    \Delta_p \, q = 0.
    \label{eq:lapl}
\end{equation}
Equation \eqref{eq:lapl} is called the \emph{Laplace equation}.
\end{definition}

The following property of combinations will be used to prove the  theorem that follows it.

\begin{lemma}
  If $k, \ell \in \mathbb{N}$, then
  \[
    {k \choose \ell} = {k \over \ell} {k-1 \choose \ell -1}.
  \]
\end{lemma}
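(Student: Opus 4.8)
The plan is to verify the identity directly from the factorial definition of the binomial coefficient recalled just above the statement, namely ${n \choose k} = n!/(k!\,(n-k)!)$. Since both sides are built entirely out of factorials, the whole argument reduces to a short algebraic simplification; there is no genuine obstacle here beyond careful bookkeeping of the factorial arguments.

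First I would expand the right-hand side by writing out ${k-1 \choose \ell-1}$ explicitly. Noting that the ``lower'' factorial argument is $(k-1)-(\ell-1) = k-\ell$, this gives
\[
    {k \over \ell}{k-1 \choose \ell-1} = {k \over \ell}\cdot {(k-1)! \over (\ell-1)!\,(k-\ell)!}.
\]

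Next I would absorb the two prefactors into the factorials using the elementary identities $k\cdot(k-1)! = k!$ and $\ell\cdot(\ell-1)! = \ell!$, which collapses the expression to
\[
    {k \over \ell}\cdot {(k-1)! \over (\ell-1)!\,(k-\ell)!} = {k! \over \ell!\,(k-\ell)!} = {k \choose \ell},
\]
which is the desired equality. The hypothesis $\ell \in \mathbb{N}$ ensures $\ell \geq 1$, so both the division by $\ell$ and the appearance of $(\ell-1)!$ are legitimate, and one tacitly works in the range $\ell \le k$ so that $(k-\ell)!$ makes sense. A purely combinatorial proof is also available --- counting the ways to choose an $\ell$-element committee with a distinguished chair from $k$ people in two orders --- but the factorial computation above is the most direct and best matches the style of the surrounding lemmas, so that is the route I would take.
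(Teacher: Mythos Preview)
Your proof is correct and essentially identical to the paper's: both expand ${k-1 \choose \ell-1}$ via the factorial formula and absorb the prefactors $k$ and $1/\ell$ into $k!$ and $1/\ell!$ to obtain ${k \choose \ell}$. The paper's version is just the bare computation without the surrounding commentary.
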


\begin{proof}
Just compute:
\[
  {k \over \ell} {k-1 \choose \ell -1} = {k \over \ell} \cdot
  {(k-1)! \over (\ell -1)! (k - \ell)!} = {k! \over \ell !
  (k-\ell)!} = {k \choose \ell},
\]
as required. \qed
\end{proof}

\begin{theorem}
If $N(p,n)$ denotes the number of linearly independent homogeneous
harmonic polynomials of degree $n$ in $p$ variables, then
\[
    N(p,n) = {2n+p-2 \over n} {n+p-3 \choose n-1}.
\]
\label{thm:N}
\end{theorem}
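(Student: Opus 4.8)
The plan is to realize the harmonic homogeneous polynomials of degree $n$ as the kernel of a linear map and then apply the rank--nullity theorem. Write $\mathcal{H}_n$ for the vector space of all homogeneous polynomials of degree $n$ in $p$ variables, so that $\dim \mathcal{H}_n = K(p,n)$ by Proposition \ref{thm:K}. Since $\Delta_p$ is a sum of second-order derivatives, it lowers the degree of a homogeneous polynomial by exactly two, giving a linear map $\Delta_p : \mathcal{H}_n \to \mathcal{H}_{n-2}$ whose kernel is precisely the space of degree-$n$ harmonic homogeneous polynomials. Rank--nullity then yields $N(p,n) = K(p,n) - \dim(\operatorname{im}\Delta_p)$, so everything comes down to showing that $\Delta_p : \mathcal{H}_n \to \mathcal{H}_{n-2}$ is \emph{surjective}; once this is known, $\dim(\operatorname{im}\Delta_p) = K(p,n-2)$ and hence $N(p,n) = K(p,n) - K(p,n-2)$.

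The surjectivity is the crux, and I would establish it through the identity
\[
    \Delta_p\!\left(r^2 Q\right) = 2(p+2m)\,Q + r^2\,\Delta_p Q,
\]
valid for any $Q \in \mathcal{H}_m$, where $r^2 = x_1^2 + \cdots + x_p^2$. This follows from the product rule together with Euler's equation \eqref{eq:Euler}, since $\Delta_p r^2 = 2p$ and $\sum_i x_i\,\partial Q/\partial x_i = mQ$. Consider the linear operator $T : \mathcal{H}_{n-2} \to \mathcal{H}_{n-2}$ defined by $T(Q) = \Delta_p(r^2 Q)$. If $T$ is bijective, then for any $g \in \mathcal{H}_{n-2}$ the polynomial $r^2 T^{-1}(g) \in \mathcal{H}_n$ maps to $g$ under $\Delta_p$, proving surjectivity. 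As $\mathcal{H}_{n-2}$ is finite dimensional, it suffices to show $T$ is injective, i.e.\ that $r^2 Q$ can be harmonic only when $Q = 0$.

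To show this, suppose $\Delta_p(r^2 Q) = 0$ with $Q \in \mathcal{H}_m$ nonzero. The displayed identity gives $r^2\,\Delta_p Q = -2(p+2m)\,Q$, and since $p + 2m > 0$ this forces $r^2 \mid Q$ in the polynomial ring (an integral domain); write $Q = r^2 Q_1$. Substituting back and cancelling the common factor of $r^2$ produces an analogous relation for $Q_1$, again forcing $r^2 \mid Q_1$. Iterating, $Q$ would be divisible by arbitrarily high powers of $r^2$, which is impossible for a nonzero polynomial of finite degree; hence $Q = 0$ and $T$ is injective. The main obstacle is precisely this descent argument --- checking carefully that the factor $r^2$ can always be cancelled and that the induction terminates.

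Finally, with surjectivity in hand I would finish by the purely algebraic simplification of $N(p,n) = K(p,n) - K(p,n-2)$. Using Proposition \ref{thm:K} and factoring out $(p+n-3)!/\big((p-1)!\,n!\big)$, the bracketed factor becomes $(p+n-1)(p+n-2) - n(n-1)$, which factors neatly as $(p-1)(2n+p-2)$. Rearranging the resulting expression into a single binomial coefficient gives
\[
    N(p,n) = \frac{2n+p-2}{n}\binom{n+p-3}{n-1},
\]
as claimed.
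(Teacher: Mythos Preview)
Your proof is correct and takes a genuinely different route from the paper. The paper expands $H_n$ as a polynomial in the last variable, $H_n=\sum_{j=0}^n x_p^{\,j}\,h_{n-j}(x_1,\dots,x_{p-1})$, and shows that the harmonicity condition yields a recursion \eqref{eq:recursion} which determines all the $h_{n-j}$ once $h_n$ and $h_{n-1}$ are chosen freely; this gives $N(p,n)=K(p-1,n)+K(p-1,n-1)$. You instead view the harmonic polynomials as $\ker\Delta_p$ and prove surjectivity of $\Delta_p:\mathcal H_n\to\mathcal H_{n-2}$ via the identity $\Delta_p(r^2Q)=2(p+2m)Q+r^2\Delta_pQ$ and a clean $r^2$-divisibility descent, arriving at $N(p,n)=K(p,n)-K(p,n-2)$. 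Both identities for $N(p,n)$ simplify to the same closed form, and your rank--nullity argument is arguably more structural. One practical advantage of the paper's approach, however, is that the recursion \eqref{eq:recursion} it establishes is reused later in the proof of Theorem~\ref{thm:leg} to pin down the Legendre polynomial uniquely; your argument, while self-contained for this theorem, does not produce that by-product.
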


\begin{proof}
Let $H_n$ be a homogeneous harmonic polynomial of degree $n$ in
$p$ variables. As in \eqref{eq:hom}, we write
\[
    H_n(x_1, x_2, \ldots, x_p) = \sum_{j=0}^n x_p^j \, h_{n-j}(x_1, x_2,
    \ldots, x_{p-1}),
\]
and we operate on $H_n$ with the Laplace operator. Since $H_n$ is
harmonic,
\begin{align*}
    0 &= \Delta_p H_n = \left( {\partial^2 \over \partial x_p^2} +
    \Delta_{p-1} \right) H_n = \sum_{j=2}^n j(j-1)x_p^{j-2} \, h_{n-j}
    + \sum_{j=0}^n x_p^j \Delta_{p-1} h_{n-j},
\end{align*}
so
\[
    0 = \sum_{j=0}^n x_p^j \left[ (j+2)(j+1)h_{n-j-2} + \Delta_{p-1}h_{n-j} \right].
\]
where we define $h_{-1} = h_{-2} = 0$. Since the $x_p^j$ are
linearly independent, each of the coefficients must vanish,
\begin{align}
  2h_{n-2} + \Delta_{p-1} h_n =& \:0, \notag \\
  6h_{n-3} + \Delta_{p-1} h_{n-1} =& \:0, \notag \\
  \vdots \ & \label{eq:recursion} \\
  n(n-1)h_0 + \Delta_{p-1}h_2 =& \:0, \notag \\
  \Delta_{p-1}h_1 =& \:0, \notag \\
  \Delta_{p-1}h_0 =& \:0. \notag
\end{align}
We have found a recursive relationship that the $h_{n-j}$ must
obey. Thus, choosing $h_n$ and $h_{n-1}$ determines the rest of
the $h_{n-j}$. By Theorem \ref{thm:K}, $h_n$ can be written in
terms of $K(p-1, n)$ basis polynomials and $h_{n-1}$ can be
written in terms of $K(p-1,n-1)$ basis polynomials. Thus we must
give $K(p-1,n)+K(p-1,n-1)$ coefficients to determine $h_n,
h_{n-1}$ which thereby determine $H_n$. Therefore,
\[
    N(p,n) = K(p-1,n)+K(p-1,n-1) = {p+n-2 \choose n} + {p+n-3
    \choose n-1}.
\]
By the above lemma, we can write this as
\[
    N(p,n) = {p+n-2 \over n} {p+n-3 \choose n-1} + {p+n-3 \choose
    n-1} = {2n + p - 2 \over n} {n+p-3 \choose n-1},
\]
and thus the theorem is proved. \qed
\end{proof}

In what follows, $x$ will always denote the vector $(x_1, x_2,
\ldots, x_p)$, $r$ or $|x|$ will denote its norm $\sqrt{x_1^2 + x_2^2
+ \cdots + x_p^2}\,$, and $\xi$ will denote the vector $(\xi_1,
\xi_2, \ldots, \xi_p)$ having unit norm. Keep in mind that $x,\xi$
represent vectors while the $x_j,\xi_j$ denote their components. Also,
$H_n$ will always denote a harmonic homogeneous polynomial.
Lastly, in the remainder of this chapter, we will often suppress
explicit reference to the number of dimensions in which we work. It
should be assumed that we deal with $p$ dimensions unless stated
otherwise.

\section{Spherical Harmonics and Orthogonality}
\label{sec:sphharm}

We are now ready to introduce the spherical harmonics. First,
notice that
\begin{equation}
    H_n(x) = H_n(r\xi) = r^n H_n(\xi).
    \label{eq:sepvar}
\end{equation}
In $p$-dimensional spherical coordinates,  the radial dependence and the angular dependence of the functions
$H_n$ can be separated.

\begin{definition}
  A \emph{spherical harmonic of degree $n$}, denoted $Y_n(\xi)$, is a harmonic homogeneous polynomial
  of degree $n$ in $p$ variables restricted to the unit
  $(p-1)$-sphere.
In other words, $Y_n$ is the map
  \[
    Y_n: S^{p-1} \rightarrow \mathbb{R}, \text{ given by }
    Y_n(\xi) = H_n(\xi) \text{ for every } \xi \in S^{p-1}
  \]
  for some harmonic homogeneous polynomial $H_n$. We can write
  $Y_n = H_n|_{S^{p-1}}$.
\end{definition}

In Chapter \ref{ch:intro}, we introduced functions $Y$ (which we claimed were spherical harmonics) that turned out to be
eigenfunctions of the angular part of the Laplace operator. Using Proposition \ref{prop:AngLap}, we can show that spherical harmonics are
indeed eigenfunctions of $\Delta_{S^{p-1}}$.

\begin{proposition}
  \begin{equation}
   \Delta_{S^{p-1}} Y_n = n(2-p-n) Y_n.
   \label{eq:Eigen}
  \end{equation}
\end{proposition}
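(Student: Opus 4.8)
The plan is to exploit the separation of variables recorded in \eqref{eq:sepvar} together with the spherical-coordinate decomposition of the Laplacian from Proposition \ref{prop:AngLap}. Since $H_n$ is by hypothesis a harmonic homogeneous polynomial, we have $\Delta_p H_n = 0$, and writing $H_n(x) = r^n Y_n(\xi)$ lets us convert this single identity into the desired eigenvalue equation for $\Delta_{S^{p-1}}$.

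First I would apply the operator
\[
    \Delta_p = \frac{\partial^2}{\partial r^2} + \frac{p-1}{r}\frac{\partial}{\partial r} + \frac{1}{r^2}\Delta_{S^{p-1}}
\]
to the product $r^n Y_n(\xi)$. The key observation is that the radial derivatives act only on the factor $r^n$, leaving $Y_n(\xi)$ untouched, while conversely $\Delta_{S^{p-1}}$ differentiates only with respect to the angles $\phi, \theta_1, \ldots, \theta_{p-2}$, so the factor $r^n$ passes outside it. Computing the three terms yields $n(n-1)\,r^{n-2}Y_n$, $(p-1)n\,r^{n-2}Y_n$, and $r^{n-2}\,\Delta_{S^{p-1}}Y_n$ respectively.

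Next I would set the sum of these three contributions equal to zero, since $\Delta_p H_n = 0$, and cancel the common factor $r^{n-2}$ (valid for $r > 0$). Because the resulting relation has coefficients independent of $r$, it holds in particular on the unit sphere where $Y_n$ is defined, leaving
\[
    \Delta_{S^{p-1}} Y_n = -\bigl[\,n(n-1) + (p-1)n\,\bigr] Y_n = n(2 - p - n)\,Y_n,
\]
which is exactly \eqref{eq:Eigen}.

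I do not anticipate any serious obstacle; the argument reduces to a one-line computation once the decomposition and the separation of variables are in hand. The only point demanding care is the justification that $\Delta_{S^{p-1}}$ commutes with multiplication by $r^n$ --- this follows because $\Delta_{S^{p-1}}$ contains only angular derivatives, as established in the derivation preceding Proposition \ref{prop:AngLap} --- together with the bookkeeping of the arithmetic $-\bigl(n(n-1)+(p-1)n\bigr) = -n(n+p-2) = n(2-p-n)$.
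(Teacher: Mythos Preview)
Your proposal is correct and follows essentially the same argument as the paper: write $H_n(x)=r^nY_n(\xi)$, apply the radial--angular decomposition of $\Delta_p$ from Proposition~\ref{prop:AngLap}, use $\Delta_p H_n=0$, and cancel the common factor $r^{n-2}$ to isolate the eigenvalue. The paper's presentation is slightly terser but the logic and the computation are identical.
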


\begin{proof}
  Let $H_n$ be a harmonic homogeneous polynomial and $Y_n$ its
  associated spherical harmonic. As in \eqref{eq:sepvar}, we have
  $H_n(x) = r^nY_n(\xi)$. Then, using Proposition
  \ref{prop:AngLap},
  \[
    0 = \Delta_p \left( r^nY_n \right) = n(n-1)r^{n-2}Y_n + {p-1
    \over r}\,nr^{n-1}Y_n + {1\over r^2} \, r^n \Delta_{S^{p-1}}
    Y_n.
  \]
  Rearranging,
  \[
    r^{n-2} \left[ \Delta_{S^{p-1}} Y_n + n(n+p-2)Y_n \right] = 0,
  \]
  which implies
  \[
    \Delta_{S^{p-1}}Y_n = n(2-p-n)Y_n,
  \]
  as sought.
\qed
\end{proof}

\begin{remark}
In three dimensions \eqref{eq:Eigen} becomes
\[
    \left[ {1
    \over \sin{\theta}} {\partial \over \partial \theta} \left(
    \sin{\theta} {\partial  \over \partial \theta} \right) + {1 \over
    \sin^2{\theta}} {\partial^2  \over \partial \phi^2} \right]Y_\ell =
    -\ell(\ell+1) Y_\ell.
\]
As promised in Subsection \ref{subs:3dang}, this allows us to show that three-dimensional spherical  harmonics carry
a definite amount of quantum mechanical angular momentum. Referring to \eqref{eq:L2},
\[
    \hat{\vec{L}}^2\,  Y_\ell = \hbar^2 \ell(\ell + 1) Y_\ell.
\]
\end{remark}

\begin{remark}
  Since the spherical harmonic $Y_n(\xi)$ is defined as the restriction of some
  $H_n(x)$ to the unit sphere, $Y_n(\xi)$ must also be
  homogeneous. However, $Y_n(t\xi)$ is not always defined since
  the domain of the spherical harmonic is $S^{p-1}$. In fact,
  it is only defined for $|t\xi|=1$, and since
  $|\xi|=1$, we see that we can only write $Y_n(t\xi)$ for $t = \pm 1$.
  The case $t=1$ is trivial, but the case $t=-1$ tells us the parity of $Y_n(\xi)$.
  Since $Y_n(-\xi) = (-1)^n Y_n(\xi)$, we see that the
  transformation $\xi \longmapsto -\xi$ sends $Y_n
  \longmapsto -Y_n$ if $n$ is odd and leaves $Y_n$ invariant if $n$ is even.
\end{remark}

We now come to the main result of this section, which we hinted at
in Section \ref{sec:SepVar}.

\begin{theorem}
  Let $Y_n(\xi), Y_m(\xi)$ be two spherical harmonics. Then
  \[
    \underset{S^{p-1}}{\int} Y_n(\xi)Y_m(\xi) \, d\Omega_{p-1} = 0, \quad \text{if } n \neq m.
  \]
  That is, spherical harmonics of different degrees are orthogonal
  over the sphere.
  \label{thm:SHorth}
\end{theorem}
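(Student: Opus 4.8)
The plan is to apply Green's second identity---a direct consequence of the divergence theorem proved in Section \ref{sec:div}---to the two harmonic homogeneous polynomials $H_n, H_m$ whose restrictions to the sphere are $Y_n, Y_m$, taking the domain to be the closed unit ball $\bar{B}^p$. First I would introduce the polynomial vector field $\vec{F} = H_n \nabla H_m - H_m \nabla H_n$ and compute its divergence. The gradient cross-terms cancel, leaving
\[
    \nabla \cdot \vec{F} = H_n \Delta_p H_m - H_m \Delta_p H_n.
\]
Because both $H_n$ and $H_m$ are harmonic, $\Delta_p H_n = \Delta_p H_m = 0$, so $\nabla \cdot \vec{F} \equiv 0$ on $\bar{B}^p$. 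Since $\vec{F}$ is polynomial it is certainly continuously differentiable near $\bar{B}^p$, so the divergence theorem applies and forces the boundary flux to vanish:
\[
    \oint_{S^{p-1}} \left( H_n \frac{\partial H_m}{\partial n} - H_m \frac{\partial H_n}{\partial n} \right) d\sigma = 0.
\]

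The key step is evaluating the normal derivatives on $S^{p-1}$, where the outward unit normal is the radial direction, so $\partial H / \partial n$ equals $\partial H / \partial r$ at $r = 1$. Here Euler's equation \eqref{eq:Euler} does the essential work: since $\sum_i x_i\, \partial H_n / \partial x_i = r\, \partial H_n / \partial r$ equals $n H_n$, we obtain $\partial H_n / \partial r = (n/r) H_n$, which restricts on the sphere to $n Y_n$; likewise $\partial H_m / \partial n = m Y_m$. Substituting, and using $H_n|_{S^{p-1}} = Y_n$, $H_m|_{S^{p-1}} = Y_m$, the boundary integral collapses to
\[
    (m-n) \int_{S^{p-1}} Y_n Y_m \, d\Omega_{p-1} = 0,
\]
and since $n \neq m$ the factor $m - n$ is nonzero, giving the claimed orthogonality.

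An alternative would lean directly on \eqref{eq:Eigen}, viewing $Y_n, Y_m$ as eigenfunctions of $\Delta_{S^{p-1}}$ with eigenvalues $n(2-p-n)$ and $m(2-p-m)$; these are distinct whenever $n \neq m$, since their difference factors as $-(n-m)(n+m+p-2)$ with $n+m+p-2 > 0$ for $p \geq 2$. Orthogonality would then follow from the self-adjointness of $\Delta_{S^{p-1}}$ in the usual eigenfunction argument. I would not pursue this route, however, because it requires separately establishing a Green's identity for the spherical Laplacian, whereas the ball-based argument above needs only the already-proved divergence theorem together with Euler's equation.

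I do not expect a genuine obstacle in this proof. The only point demanding care is the identification $\partial H / \partial n = n Y_n$ on the sphere---that is, recognizing that the radial normal derivative of a homogeneous polynomial is governed entirely by its degree via Euler's relation. Everything else (the divergence computation, the vanishing from harmonicity, and the final algebra) is routine.
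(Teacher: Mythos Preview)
Your proof is correct and follows essentially the same approach as the paper: apply the divergence theorem (Green's identity) to $H_n\nabla H_m - H_m\nabla H_n$ on the unit ball, use harmonicity to kill the volume integral, and invoke Euler's equation to convert the normal derivatives on $S^{p-1}$ into $mY_m$ and $nY_n$, yielding $(m-n)\int Y_nY_m\,d\Omega_{p-1}=0$.
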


\begin{proof}
  Let us perform the computation using the harmonic homogeneous polynomials $H_n$ and $H_m$ where $Y_n = H_n |_{S^{p-1}}$ and
  $Y_m = H_m |_{S^{p-1}}$.

Now, we start with the divergence  theorem in $p$ dimensions for a vector $A$,
\begin{align*}
         \underset{B^p}{\int}  \nabla_p\cdot  A(x)  \, d^px  = \underset{S^{p-1}}{\int}   A(\xi) \cdot \xi  \, d\Omega_{p-1} ,
\end{align*}
apply it twice for  the vectors $H_n\nabla H_m$ and $H_m\nabla H_n$, and subtract the results\footnote{Incidentally, we point out that the
identity thus obtained for any two functions $f, g$
\[
       \underset{B^p}{\int}  (f \Delta_p g - g \Delta_p f) \, d^px
    = \underset{S^{p-1}}{\int} (f \nabla_p g - g \nabla_p f) \cdot \xi \, d\Omega_{p-1},
\]
\label{GreensTheorem}
is known as \emph{Green's theorem}.
We may also observe that $\nabla_p f \cdot \xi$ is the directional derivative of $f$ along $\xi$ and write
\[
       \underset{B^p}{\int}  (f \Delta_p g - g \Delta_p f) \, d^px
    = \underset{S^{p-1}}{\int} \left(f {\partial g\over\partial\xi} - g {\partial f\over\partial\xi}\right)  \, d\Omega_{p-1}.
\]}:
  \begin{align*}
      \underset{B^p}{\int} \nabla_p\, \cdot &\left[ H_n(x) \nabla_p H_m(x) - H_m(x) \nabla_p H_n(x) \right] \, d^px 
      \\
      &= \underset{S^{p-1}}{\int} \left[ H_n(\xi) \nabla_p H_m(\xi) - H_m(\xi) \nabla_p H_n(\xi) \right]\cdot \xi \, d\Omega_{p-1},
  \end{align*}
or
\begin{equation}
      0 = \underset{S^{p-1}}{\int} \left[ H_n(\xi) \nabla_p H_m(\xi) - H_m(\xi) \nabla_p H_n(\xi) \right]\cdot \xi \, d\Omega_{p-1},
\label{eq:Orthogonal1}
\end{equation}
where we used the property
  \[
    \nabla_p \cdot \left( H_n \nabla_p H_m \right) = \nabla_p H_n \cdot \nabla_p H_m + H_n \Delta_p H_m ,
  \]
  and the fact $\Delta_p H_m=\Delta_p H_n=0$.

   Euler's equation \eqref{eq:Euler} for a homogeneous polynomial,
  \[
    \sum_{j=1}^p {\partial H_n(\xi) \over \partial \xi_j} \,\xi_j = n \, H_n(\xi),
  \]
  can be written in the form
\[
        \nabla_p H_n(\xi) \cdot \xi = n\,H_n(\xi) .
\]
 With the help of this result, equation \eqref{eq:Orthogonal1} takes the form
  \[
    (m-n) \underset{S^{p-1}}{\int} H_n(\xi)H_m(\xi) \, d\Omega_{p-1} = 0.
  \]
  But the integral is carried out over $S^{p-1}$ where $Y_n = H_n$ and $Y_m = H_m$. This equation is thus equivalent to
  \[
    (m-n) \underset{S^{p-1}}{\int} Y_n(\xi)Y_m(\xi) \, d\Omega_{p-1} = 0.
  \]
  By hypothesis, $n \neq m$; therefore $Y_n, Y_m$ are orthogonal  over the sphere.
\qed
\end{proof}

Given a set of $N(p,n)$ linearly independent spherical harmonics
of degree $n$, we can use the Gram-Schmidt orthonormalization
procedure to produce an orthonormal set of spherical harmonics,
i.e., a set
\begin{equation}
    \{ Y_{n,i}(\xi) \}_{i=1}^{N(p,n)} \quad \text{with} \quad \underset{S^{p-1}}{\int} Y_{n,i}(\xi) Y_{n,j}(\xi) \, d\Omega_{p-1} =
    \delta_{ij},
    \label{eq:orth}
\end{equation}
where
\[
    \delta_{ij} = \left\{
        \begin{array}{lr}
            1 & \text{if } i=j,\\
            0 & \text{if } i \neq j,
        \end{array}
    \right.
\]
is the Kronecker delta. For the remainder of this chapter, unless
indicated otherwise, we will let $Y_{n,i}(\xi)$ denote an $n$-th
degree spherical harmonic belonging to an orthonormal set of
$N(p,n)$ such functions, as in \eqref{eq:orth}.

In what follows, let $R$ be an orthogonal matrix that acts on
$\xi$ as a rotation of coordinates. Notice that since the
integration is taken over the entire sphere in \eqref{eq:orth},
the orthonormal set of spherical harmonics remains orthonormal in
a rotated coordinate frame. That is,
\begin{equation}
    \underset{S^{p-1}}{\int} Y_{n,i}(R\xi) Y_{n,j}(R\xi) \, d\Omega_{p-1} =
    \delta_{ij}.
    \label{eq:rot}
\end{equation}

\begin{proposition}
  If $Y_n(\xi)$ is a spherical harmonic of degree $n$, then $Y'_n(\xi) = Y_n(R\xi)$ is also
  a spherical harmonic of degree $n$, for any rotation matrix $R$.
\end{proposition}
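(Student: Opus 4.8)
The plan is to pull the rotation back to the level of the underlying harmonic homogeneous polynomial and check that pre-composition with $R$ preserves each of the three defining properties in turn: being a polynomial, being homogeneous of degree $n$, and being harmonic. By definition $Y_n = H_n|_{S^{p-1}}$ for some harmonic homogeneous polynomial $H_n$ of degree $n$. I would introduce the candidate polynomial $H'_n(x) = H_n(Rx)$ and observe that, because $R$ is a rotation, $|R\xi| = |\xi| = 1$ for $\xi \in S^{p-1}$, so $R$ maps $S^{p-1}$ into itself. Hence the restriction $H'_n|_{S^{p-1}}$ is exactly the function $\xi \mapsto H_n(R\xi) = Y_n(R\xi) = Y'_n(\xi)$. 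Thus it suffices to prove that $H'_n$ is a harmonic homogeneous polynomial of degree $n$, for then $Y'_n = H'_n|_{S^{p-1}}$ is a spherical harmonic of degree $n$ by definition.

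The polynomial and homogeneity properties are routine. Each component of $Rx$ is a linear combination $\sum_\ell R_{k\ell} x_\ell$ of the coordinates, so $H'_n(x) = H_n(Rx)$ is a polynomial in $x_1, \ldots, x_p$. For homogeneity I would compute $H'_n(tx) = H_n(R(tx)) = H_n(tRx) = t^n H_n(Rx) = t^n H'_n(x)$, using linearity of $R$ together with the homogeneity of $H_n$; this shows $H'_n$ is homogeneous of degree $n$.

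The crux is harmonicity, and here I would invoke Proposition \ref{prop:invlap}. Writing $x' = Rx$, that proposition gives the operator identity $\sum_{j=1}^p (\partial/\partial x_j)^2 = \sum_{j=1}^p (\partial/\partial x'_j)^2$. Applying this to $H_n$ viewed as a function of its argument $x'$, I get $\Delta_p H'_n(x) = \sum_j (\partial/\partial x_j)^2\, H_n(x') = \sum_j (\partial/\partial x'_j)^2\, H_n(x') = (\Delta_p H_n)(x')$. Since $H_n$ is harmonic, $\Delta_p H_n$ vanishes identically, and as $x$ ranges over $\mathbb{R}^p$ the point $x' = Rx$ also ranges over all of $\mathbb{R}^p$; therefore $\Delta_p H'_n(x) = 0$ everywhere, so $H'_n$ is harmonic.

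I expect the one genuine subtlety to be the bookkeeping in this last step: one must be careful that the operator identity of Proposition \ref{prop:invlap} is being applied to $H_n$ as a function of the rotated variable $x'$, and that the conclusion $(\Delta_p H_n)(x') = 0$ holds at every $x'$ precisely because $R$ is a bijection of $\mathbb{R}^p$. Everything else (polynomiality, homogeneity, and the fact that $R$ preserves the sphere) is immediate, so the invariance of the Laplacian is the only substantive ingredient.
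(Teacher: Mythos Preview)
Your proof is correct and follows essentially the same route as the paper: introduce $H'_n(x)=H_n(Rx)$, verify in turn that it is a polynomial, homogeneous of degree $n$, and harmonic (the last via Proposition~\ref{prop:invlap}), and conclude that its restriction to $S^{p-1}$ is the required spherical harmonic. If anything, you are slightly more explicit than the paper in noting that $R$ preserves $S^{p-1}$ and in unpacking the harmonicity step.
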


\begin{proof}
  Let $Y_n(\xi)$ be a spherical harmonic of degree $n$. Then there exists a harmonic homogeneous polynomial $H_n(x)$ of degree $n$
  such that $Y_n = H_n | _{S^{p-1}}$. Denote $H'_n(x) = H_n(Rx)$. We claim  that $Y'_n = H'_n|_{S^{p-1}}$. To see this, first notice that
  $H'_n(x)$ is a polynomial in $x_1, x_2, \ldots, x_p$. Indeed, $H'_n(x) = H_n(Rx)$ is a linear combination of powers of the
  $\sum_{j=1}^p R_{ij} x_j$ and thus a linear combination of powers of the $x_j$. Next, notice that $H'_n$ is homogeneous of degree $n$,
  \[
    H'_n(tx) = H_n(tRx) = t^nH_n(Rx) = t^nH'_n(x).
  \]
  Finally, notice that $H'_n$ is harmonic, by Proposition \ref{prop:invlap}.
  Restricting $H'_n$ to the unit sphere thus gives a  spherical harmonic of degree $n$, $Y'_n(\xi)$.
\qed
\end{proof}

Since the set $\{ Y_{n,i}(\xi) \}_{i=1}^{N(p,n)}$ in \eqref{eq:orth} is a maximal linearly independent set of
spherical harmonics of degree $n$, it serves as a basis for all
such functions. We have just shown that $Y_{n,j}(R\xi)$ is a spherical
harmonic of degree $n$ provided $Y_{n,j}(\xi)$ is as well. Thus, we
can write $Y_{n,j}(R\xi)$ in terms of the basis functions:
\[
    Y_{n,j}(R\xi) = \sum_{\ell=1}^{N(p,n)} C_{\ell j} Y_{n, \ell}(\xi).
\]
Using this expression to rewrite the integral in \eqref{eq:rot}, we can show that the matrix $C$ defined in the above expression is
orthogonal. Indeed,
\begin{align*}
    \delta_{ij} &= \underset{\xi \in S^{p-1}}{\int} \left( \sum_{k=1}^{N(p,n)} C_{ki} Y_{n,k}(\xi) \right)
                          \left( \sum_{\ell=1}^{N(p,n)} C_{\ell j} Y_{n,\ell}(\xi) \right) \, d\Omega_{p-1} \\
                    &= \sum_{k, \ell = 1}^{N(p,n)} C_{ki}C_{\ell  j} \underset{\xi \in S^{p-1}}{\int} Y_{n,k}(\xi)Y_{n, \ell}(\xi) \, d\Omega_{p-1}\\
                    &= \sum_{k, \ell = 1}^{N(p,n)} C_{ki}C_{\ell  j} \delta_{k\ell} =  \sum_{k=1}^{N(p,n)} C^t_{ik}C_{kj}.
\end{align*}

For the following discussion, let $\xi, \eta$ be two unit vectors.
Let us consider the function given by
\begin{equation}
    F_n(\xi, \eta) = \sum_{j=1}^{N(p,n)} Y_{n,j}(\xi) Y_{n,j}(\eta).
    \label{eq:F}
\end{equation}

\begin{lemma}
  The function $F_n$ defined above is invariant under a rotation of
  coordinates.
  \label{prop:Finv}
\end{lemma}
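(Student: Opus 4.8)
The plan is to prove the invariance directly, showing $F_n(R\xi, R\eta) = F_n(\xi,\eta)$ for an arbitrary rotation matrix $R$, by substituting the transformation law for rotated spherical harmonics that was derived immediately above the statement of the lemma. Recall that since $\{Y_{n,j}\}_{j=1}^{N(p,n)}$ is a basis for the space of degree-$n$ spherical harmonics and $Y_{n,j}(R\xi)$ is again such a harmonic, we may expand
\[
    Y_{n,j}(R\xi) = \sum_{\ell=1}^{N(p,n)} C_{\ell j} Y_{n,\ell}(\xi),
\]
where we have already verified that the coefficient matrix $C$ is orthogonal, i.e. $\sum_{k} C_{ki} C_{kj} = \delta_{ij}$.

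First I would apply this expansion to both factors appearing in the definition of $F_n$, obtaining
\[
    F_n(R\xi, R\eta) = \sum_{j=1}^{N(p,n)} \left( \sum_{k=1}^{N(p,n)} C_{kj} Y_{n,k}(\xi) \right) \left( \sum_{\ell=1}^{N(p,n)} C_{\ell j} Y_{n,\ell}(\eta) \right).
\]
Interchanging the order of summation then gives
\[
    F_n(R\xi, R\eta) = \sum_{k,\ell=1}^{N(p,n)} \left( \sum_{j=1}^{N(p,n)} C_{kj} C_{\ell j} \right) Y_{n,k}(\xi) Y_{n,\ell}(\eta),
\]
which isolates the inner sum as the quantity that must be evaluated.

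The key step is to recognize this inner sum as the $(k,\ell)$ entry of $CC^t$. Since $C$ is a square matrix satisfying $C^t C = I$, it also satisfies $C C^t = I$, so $\sum_{j} C_{kj} C_{\ell j} = \delta_{k\ell}$. Collapsing the double sum against this Kronecker delta leaves
\[
    F_n(R\xi, R\eta) = \sum_{k=1}^{N(p,n)} Y_{n,k}(\xi) Y_{n,k}(\eta) = F_n(\xi, \eta),
\]
which is precisely the claimed rotational invariance.

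I expect no serious obstacle here; the entire content is index bookkeeping together with the orthogonality of $C$. The only point deserving a word of justification is the passage from $C^t C = I$ (which we proved) to $C C^t = I$, which is valid because $C$ is a finite square matrix, for which a one-sided inverse is automatically two-sided.
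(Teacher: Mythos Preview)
Your proof is correct and follows essentially the same route as the paper: substitute the expansion $Y_{n,j}(R\xi)=\sum_\ell C_{\ell j}Y_{n,\ell}(\xi)$ into both factors, swap the order of summation, and collapse using the orthogonality of $C$. Your explicit remark that $C^tC=I$ implies $CC^t=I$ for a finite square matrix is a nice touch, since the paper uses this step without comment.
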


\begin{proof}
  Let $R$ be a rotation matrix. Then, using the orthogonal matrix
  $C$ discussed above,
\begin{align*}
    F_n(R\xi,R\eta) &= \sum_{j=1}^{N(p,n)} Y_{n,j}(R\xi)
    Y_{n,j}(R\eta)\\
    &= \sum_{j=1}^{N(p,n)} \left( \sum_{\ell=1}^{N(p,n)} C_{\ell j}Y_{n,\ell}(\xi) \right) \left( \sum_{m=1}^{N(p,n)} C_{mj}Y_{n,m}(\eta) \right)
\end{align*}
  so
\begin{align*}
    F_n(R\xi,R\eta) &= \sum_{\ell, m = 1}^{N(p,n)} Y_{n,\ell}(\xi) Y_{n,m}(\eta)
    \left( \sum_{j=1}^{N(p,n)} C_{\ell j} C^t_{jm} \right)\\ &=
    \sum_{\ell=1}^{N(p,n)} Y_{n,\ell}(\xi) Y_{n,\ell}(\eta) =
    F_n(\xi, \eta),
\end{align*}
  as claimed.
\qed
\end{proof}

Since the dot product $\langle \xi , \eta \rangle$ is also invariant under the rotation $R$,
this suggests that $F_n(\xi, \eta)$ could be a function of
$\langle \xi, \eta \rangle$ alone. In fact, this is the case, as
shown in the following lemma.

\begin{lemma}
  If $F_n$ is defined as in \eqref{eq:F}, then $F_n(\xi, \eta) = p(\langle \xi, \eta
  \rangle)$ where $p(t)$ is a polynomial.
  \label{lem:Finn}
\end{lemma}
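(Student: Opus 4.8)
The plan is to exploit the rotation invariance of $F_n$ established in Lemma \ref{prop:Finv} to collapse the dependence on two vectors into a single polynomial dependence on $\langle \xi, \eta \rangle$. First I would fix the ``north pole'' $\hat{e} = (0, \ldots, 0, 1)$ and, given arbitrary unit vectors $\xi, \eta$, choose a rotation matrix $R$ with $R\eta = \hat{e}$ (such an $R$ exists since the orthogonal transformations act transitively on $S^{p-1}$). Because rotations preserve the dot product, $\langle R\xi, \hat{e} \rangle = \langle R\xi, R\eta \rangle = \langle \xi, \eta \rangle$; call this number $t$. Invariance of $F_n$ then gives $F_n(\xi, \eta) = F_n(R\xi, R\eta) = F_n(R\xi, \hat{e})$, so it suffices to study the single-argument function $G(\zeta) := F_n(\zeta, \hat{e})$ and to show it depends on $\zeta$ only through $\zeta_p = \langle \zeta, \hat{e} \rangle = t$.

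Next I would promote $G$ to a genuine homogeneous polynomial on $\mathbb{R}^p$. Writing $Y_{n,j} = H_{n,j}|_{S^{p-1}}$ for harmonic homogeneous polynomials $H_{n,j}$, set $\tilde{G}(x) = \sum_{j} H_{n,j}(x) \, Y_{n,j}(\hat{e})$; this is homogeneous of degree $n$ and agrees with $G$ on the sphere. The key structural input is that $G$ is invariant under every rotation $R'$ fixing $\hat{e}$, since $F_n(R'\zeta, \hat{e}) = F_n(R'\zeta, R'\hat{e}) = F_n(\zeta, \hat{e})$. As two homogeneous polynomials of equal degree that agree on $S^{p-1}$ must coincide (both equal $|x|^n$ times their common restriction), this upgrades to $\tilde{G}(R'x) = \tilde{G}(x)$ for all $x$ and all such $R'$. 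But the rotations fixing $\hat{e}$ are precisely the orthogonal transformations of the first $p-1$ coordinates that leave $x_p$ alone.

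The heart of the matter is then a small invariant-theory fact: a polynomial $Q(y_1, \ldots, y_m)$ invariant under all of $O(m)$ is a polynomial in $s = y_1^2 + \cdots + y_m^2$. I would prove this by splitting $Q$ into homogeneous pieces, each separately invariant since $O(m)$ acts linearly; transitivity of $O(m)$ on each sphere forces a homogeneous invariant of degree $k$ to equal $c\,|y|^k$, and since $-I \in O(m)$ the odd-degree pieces vanish, leaving $c\,s^{k/2}$. Applying this with $m = p-1$, after writing $\tilde{G}(x) = \sum_\ell x_p^\ell \, Q_\ell(x_1, \ldots, x_{p-1})$ and using linear independence of the powers of $x_p$ to conclude each $Q_\ell$ is $O(p-1)$-invariant, gives $\tilde{G}(x) = q(x_p, \rho^2)$ for a two-variable polynomial $q$, where $\rho^2 = x_1^2 + \cdots + x_{p-1}^2$. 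Restricting to $S^{p-1}$, where $\rho^2 = 1 - \zeta_p^2$, collapses this to $G(\zeta) = q(\zeta_p, 1-\zeta_p^2)$, a polynomial in $\zeta_p$ alone, so that $F_n(\xi, \eta) = p(\langle \xi, \eta \rangle)$ as claimed. I expect the main obstacle to be this invariant-theory step: making precise that invariance under the stabilizer of $\hat{e}$ forces the reduction to the two variables $\zeta_p$ and $\rho^2$, and presenting the transitivity and parity arguments cleanly at the level of the book.
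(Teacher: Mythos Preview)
Your argument is correct and shares the same skeleton as the paper's: use the rotation invariance of $F_n$ from Lemma~\ref{prop:Finv}, put one vector at a coordinate axis, and then invoke invariance under the stabilizer of that axis to kill all dependence except on the dot product. The execution differs, though. The paper rotates \emph{both} vectors simultaneously so that $\eta' = (1,0,\ldots,0)$ and $\xi' = (t,\sqrt{1-t^2},0,\ldots,0)$; this immediately leaves a polynomial in just the two scalars $t$ and $\sqrt{1-t^2}$, and a single further rotation by $\pi$ about the $x_1$-axis (which flips the sign of $\sqrt{1-t^2}$) shows the latter appears only in even powers. You instead rotate only $\eta$ to the pole and then prove a general $O(p-1)$-invariant-theory lemma to reduce the remaining $p-1$ coordinates to their squared norm. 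Your route is a bit longer but more conceptual and reusable (indeed, essentially the same reduction reappears in the proof of Theorem~\ref{thm:leg}); the paper's route is shorter and entirely elementary, needing only two explicit rotations rather than the full stabilizer.
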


\begin{proof}
    A rotation of coordinates leaves $\langle \xi, \eta \rangle$ invariant and, by the above lemma, does not change $F_n(\xi,\eta)$ either.
    For some $-1 \leq t \leq 1$, there exists a
    rotation $R$ that sends
    \begin{align*}
        \xi   &\overset{R}{\longmapsto} \xi' = (t, \sqrt{1-t^2}, 0, \ldots, 0) , \\
        \eta &\overset{R}{\longmapsto}\eta'= (1, 0, \ldots, 0) .
    \end{align*}
    To see this, just rotate coordinates so that $\eta$ points along the $x_1$-axis. Then rotate coordinates around the $x_1$-axis until the
    component of $\xi$ orthogonal to $\eta$ points along the $x_2$-axis.

    Notice that $\langle \xi, \eta \rangle = \langle \xi', \eta' \rangle = t$. Since $F_n(\xi, \eta)$ is a sum of products of spherical
    harmonics, which are each polynomials in the components of  $\xi$ or $\eta$, $F_n$ is a polynomial, say $p$, in the
    components of its arguments, i.e.,
    \begin{equation}
        F_n(\xi, \eta) = F_n(\xi', \eta') = p(t, \sqrt{1-t^2}).
        \label{eq:poly1}
    \end{equation}
    We can impose another rotation of coordinates, again without
    changing $F_n$ or $\langle \xi, \eta \rangle$. Let $\tilde{R}$ be the
    transformation that rotates vectors by $\pi$ radians about the
    $x_1$-axis, i.e.,
    \begin{align*}
        \xi'    &\overset{\tilde{R}}{\longmapsto} \xi'' = (t, -\sqrt{1-t^2}, 0, \ldots, 0) , \\
         \eta' &\overset{\tilde{R}}{\longmapsto} \eta''= (1, 0, \ldots, 0).
    \end{align*}
    Just as in \eqref{eq:poly1}, we conclude that
    \begin{equation}
        F_n(\xi, \eta) = F_n(\xi'', \eta'') = p(t, -\sqrt{1-t^2}).
        \label{eq:poly2}
    \end{equation}
    From \eqref{eq:poly1} and \eqref{eq:poly2}, we see that
    $\sqrt{1-t^2}$ must appear only with even powers in
    $F_n$. Thus, $F_n$ is really a polynomial in $t$ and
    $1-t^2$, which is just a polynomial in $t$. Since $t = \langle \xi, \eta
    \rangle$, the lemma is proved. \qed
\end{proof}

\section{Legendre Polynomials}

\begin{theorem}
  Let $\eta = (1, 0, \ldots, 0)$ and let $L_n(x)$ be a harmonic homogeneous polynomial of degree
  $n$ satisfying
  \begin{itemize}
    \item[(i)] $L_n(\eta) = 1$,
    \item[(ii)] $L_n(Rx) = L_n(x)$ for all rotation matrices
    $R$ such that $R\eta = \eta$.
  \end{itemize}
  Then $L_n(x)$ is the only harmonic homogeneous polynomial of
  degree $n$ obeying these properties. In particular, these two
  properties uniquely determine the corresponding spherical
  harmonic $L_n | _{S^{p-1}}$. Moreover, this spherical harmonic $L_n(\xi)$ is a polynomial in $\langle \xi, \eta \rangle$.
  \label{thm:leg}
\end{theorem}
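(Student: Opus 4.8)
The plan is to establish the two substantive assertions hidden in the statement: that $L_n|_{S^{p-1}}$ is a polynomial in $\langle\xi,\eta\rangle$, and that conditions (i)--(ii) pin $L_n$ down uniquely. I will work throughout with the harmonic homogeneous polynomial $H_n$ underlying $L_n=H_n|_{S^{p-1}}$, splitting coordinates as $x=(x_1,y)$ with $y=(x_2,\dots,x_p)$ and $|y|^2=x_2^2+\cdots+x_p^2$, and using $\langle\xi,\eta\rangle=\xi_1$. Existence is not an issue: fixing the second slot of \eqref{eq:F} gives a degree-$n$ spherical harmonic $\xi\mapsto F_n(\xi,\eta)$, which by Lemma \ref{prop:Finv} already satisfies (ii) (take $R\eta=\eta$) and by Lemma \ref{lem:Finn} is a polynomial in $\langle\xi,\eta\rangle$; dividing by $F_n(\eta,\eta)=\sum_j Y_{n,j}(\eta)^2>0$ (positive because a nonzero rotation-invariant space of spherical harmonics cannot vanish at a single point) enforces (i).

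First I would reduce $L_n$ to a function of one variable. In the book's sense a rotation is any orthogonal $R$, so the $R$ with $R\eta=\eta$ are exactly the orthogonal maps of the $y$-block; these act transitively on each latitude $\{\xi\in S^{p-1}:\xi_1=t\}$, since two unit vectors with a common first coordinate have $y$-parts of equal length. Hence (ii) forces $Y_n(\xi)=g(\xi_1)$ for a single-variable $g$. Taking the representative $\xi=(t,\sqrt{1-t^2},0,\dots,0)$ gives $g(t)=H_n(t,\sqrt{1-t^2},0,\dots,0)$, a polynomial in $t$ and $\sqrt{1-t^2}$; invariance under the reflection $x_2\mapsto-x_2$ (again orthogonal and fixing $\eta$) removes the odd powers of $\sqrt{1-t^2}$, exactly as in the proof of Lemma \ref{lem:Finn}, so $g$ is a polynomial in $t=\langle\xi,\eta\rangle$. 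This settles the final assertion. Re-expanding, $H_n$ becomes a polynomial in $x_1$ and $|y|^2$, and degree-$n$ homogeneity forces the shape $H_n(x)=\sum_{k=0}^{\lfloor n/2\rfloor}c_k\,x_1^{\,n-2k}|y|^{2k}$.

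Finally I would extract uniqueness from harmonicity. Writing $\Delta_p=\partial_{x_1}^2+\Delta_y$ and using $\Delta_y|y|^{2k}=2k(2k+p-3)|y|^{2k-2}$ for the Laplacian of a radial monomial in the $(p-1)$-dimensional $y$-space, the equation $\Delta_pH_n=0$ becomes, upon matching the coefficient of each $x_1^{\,n-2k-2}|y|^{2k}$, the two-term recurrence $c_{k+1}=-\dfrac{(n-2k)(n-2k-1)}{2(k+1)(2k+p-1)}\,c_k$. Every denominator is nonzero, so $c_0$ determines all the remaining $c_k$, and the invariant harmonic polynomials of this form constitute a one-dimensional space; condition (i) reads $L_n(\eta)=H_n(\eta)=c_0=1$, fixing the scale and hence $L_n$. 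I expect this translation of the Laplace equation into a recurrence among the $c_k$ to be the crux; by contrast the reduction to a function of $\langle\xi,\eta\rangle$ and the proof of polynomiality are routine, recycling the symmetry arguments already used for $F_n$ in Lemma \ref{lem:Finn}. One could instead derive the Gegenbauer-type equation $(1-t^2)g''-(p-1)tg'+n(n+p-2)g=0$ from Proposition \ref{prop:AngLap} and argue uniqueness of its polynomial solution, but the coefficient recurrence is more elementary.
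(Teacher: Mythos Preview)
Your proof is correct and follows essentially the same route as the paper: use the rotational invariance (ii) to force $H_n(x)=\sum_k c_k\,x_1^{n-2k}|y|^{2k}$, apply $\Delta_p$ to obtain a two-term recurrence determining all $c_k$ from $c_0$, and fix $c_0=1$ via (i); the paper does the same, except that it quotes the recursion \eqref{eq:recursion} from the proof of Theorem~\ref{thm:N} rather than recomputing it, and it does not separately address existence as you do.
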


\begin{proof}
    Let $\xi \in S^{p-1}$. Then there exist  $\nu \in {S}^{p-1}$ such that $\langle \nu , \eta \rangle = 0$ and
    $t \in [-1,1]$ such that $\xi = t\eta + \sqrt{1-t^2} \,
    \nu$. Notice $\langle \xi, \eta \rangle = t$ and that
    \begin{equation}
        \xi_1 = t, \quad \xi_2^2 + \cdots + \xi_p^2 = 1-t^2.
        \label{eq:t}
    \end{equation}
    As in the proof of Theorem \ref{thm:N}, we can write
    \begin{equation}
        L_n(x) = \sum_{j=1}^n x_1^j h_{n-j}(x_2, x_3, \ldots,
        x_p),
        \label{eq:exp}
    \end{equation}
    where the $h_{n-j}$ are homogeneous polynomials of degree
    $n-j$. Let $R$ be a rotation matrix as described in property
    (ii) in the statement of the theorem. When $R$ acts on $x$, it
    sends
    \[
        x_1, x_2, \ldots, x_p \overset{R}{\longmapsto} x_1, x'_2, \ldots, x'_p.
    \]
    By property (ii),
    \[
        0 = L_n(x) - L_n(Rx) = \sum_{j=1}^n x_1^j \left[ h_{n-j}(x_2, \ldots, x_p) - h_{n-j}(x'_2, \ldots, x'_p) \right],
    \]
    and using the linear independence of the $x_1^j$, we see that  all the $h_{n-j}(x_2, \ldots, x_p)$ are invariant under the
    rotation of coordinates $R$. Thus, these polynomials must depend only on the radius $\sqrt{x_2^2 +  \cdots + x_p^2}$, i.e.,
    \begin{equation}
        h_{n-j}(x_2, \ldots, x_p) = c_{n-j} \left( \sqrt{x_2^2 + \cdots +
        x_p^2} \right) ^{n-j},
        \label{eq:h}
    \end{equation}
    where the $c_{n-j}$ are constants, and $c_{n-j} = 0$ for all
    odd $n-j$ since the $h_{n-j}$ must be polynomials. Property
    (i) gives us one of these coefficients,
    \[
        1 = L_n(\eta) = c_0
    \]
    since all the $x_2, \ldots, x_p$ are zero for this vector.
    Since the form of the $h_{n-j}$ is given by \eqref{eq:h},
    knowing $c_0$ is enough to determine the rest of the $h_{n-j}$
    using the recursive relation \eqref{eq:recursion}
    and the fact that the $c_{n-j}$ are zero for odd $n-j$. Therefore, $L_n$ as well as the spherical
    harmonic $L_n|_{S^{p-1}}$ are uniquely determined by properties
    (i) and (ii). Finally, using \eqref{eq:exp}, \eqref{eq:h}, and \eqref{eq:t},
    \begin{equation*}
        L_n(\xi) = \underset{n-j=\text{even}}{\sum_{j=0}^n} \xi_1^j c_{n-j} (\xi_2^2 + \cdots  \xi_p^2)^{(n-j)/2},
    \end{equation*}
    or
    \begin{equation}
        L_n(\xi(t)) = \underset{n-j=\text{even}}{\sum_{j=0}^n} t^j \, c_{n-j} (1-t^2)^{(n-j)/2}.
        \label{eq:Pn}
    \end{equation}
    That is, $L_n(\xi)$ is a polynomial in $t = \langle \xi, \eta \rangle$, and the theorem is proved.
\qed
\end{proof}

\begin{definition}
  We call the polynomial $L_n(\xi)$ introduced in Theorem \ref{thm:leg} the \emph{Legendre polynomial\footnote{We follow the naming convention used in \cite{SpecFun:Hochstadt} and common in physics. Mathematicians usually refer to these as Legendre polynomials only when $p=3$ and as \emph{ultraspherical polynomials} for arbitrary $p$.} of degree $n$}. Written in terms of the variable $t$, we denote it by $P_n(t)$.
\end{definition}

\begin{remark}
  We can quickly obtain some properties of Legendre polynomials.
  First, from \eqref{eq:Pn}, we can see $P_n(t)$ is a
  polynomial of degree $n$. We can also compute
  \[
    1 = L_n(\eta) = P_n(\langle \eta, \eta \rangle) = P_n(1).
  \]
   We can determine the parity of the Legendre polynomials in
  more than one way. First, from \eqref{eq:Pn}, we can see that if
  $n$ is even, $P_n$ contains only even powers of $t$, while if
  $n$ is odd, $P_n$ contains only odd powers of $t$.
  Alternatively, we can see that
  \begin{align}
    P_n(-t) &= P_n(\langle -\xi, \eta \rangle) = L_n(-\xi) =
    (-1)^nL_n(\xi) \notag \\
    &= (-1)^nP_n(\langle \xi, \eta \rangle) =
    (-1)^nP_n(t).
    \label{eq:LegPar}
  \end{align}
  Either way, we determine that $P_n$ is even whenever $n$ is even,
  and $P_n$ is odd whenever $n$ is odd.
\end{remark}

In the following theorem, we demonstrate how to write the Legendre
polynomials in terms of an orthonormal set of spherical harmonics.

\begin{theorem}[Addition Theorem for Legendre Polynomials] 
  Let \\ $\{ Y_{n,j}(\xi) \}_{j=1}^{N(p,n)}$ be an
  orthonormal set of $n$-th degree spherical harmonics. Then the Legendre
  polynomial of degree $n$ may be written as
  \begin{equation} \label{eq:addthm}
    P_n(\langle \xi, \eta \rangle) = {\Omega_{p-1} \over N(p,n)}
    \sum_{j=1}^{N(p,n)} Y_{n,j}(\xi)Y_{n,j}(\eta).
  \end{equation}
  \label{thm:add}
\end{theorem}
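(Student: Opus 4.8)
The plan is to recognize the right-hand side of \eqref{eq:addthm} as exactly the function $F_n(\xi,\eta)$ from \eqref{eq:F}, to argue that $F_n(\cdot,\eta)$ is forced to be a constant multiple of the Legendre polynomial by the uniqueness in Theorem \ref{thm:leg}, and then to pin that constant down by an integration. First I would fix $\eta \in S^{p-1}$ and regard $F_n(\xi,\eta)$ as a function of $\xi$ alone. Since each $Y_{n,j}(\eta)$ is then merely a scalar, $F_n(\cdot,\eta) = \sum_{j} Y_{n,j}(\eta)\,Y_{n,j}(\cdot)$ is a linear combination of degree-$n$ spherical harmonics, hence again a spherical harmonic of degree $n$; its homogeneous extension $\sum_{j} Y_{n,j}(\eta)\,H_{n,j}(x)$, where $H_{n,j}$ is the harmonic homogeneous polynomial with $H_{n,j}|_{S^{p-1}} = Y_{n,j}$, is a harmonic homogeneous polynomial of degree $n$.

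Next I would check that this harmonic satisfies property (ii) of Theorem \ref{thm:leg}. If $R$ is any rotation fixing $\eta$, then by the invariance of Lemma \ref{prop:Finv},
\[
    F_n(R\xi,\eta) = F_n(R\xi,R\eta) = F_n(\xi,\eta),
\]
so $F_n(\cdot,\eta)$ is invariant under the stabilizer of $\eta$, and by homogeneity this invariance passes to the extension. Theorem \ref{thm:leg}, applied with $\eta$ as the reference vector (valid for any unit $\eta$ by rotating coordinates), then tells us that the space of such harmonic homogeneous polynomials is one-dimensional, spanned by the Legendre object, so
\[
    F_n(\xi,\eta) = F_n(\eta,\eta)\, P_n(\langle \xi,\eta\rangle),
\]
the normalization following by evaluating at $\xi=\eta$ and using $P_n(1)=1$. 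A key bookkeeping point is that the constant $c := F_n(\eta,\eta)$ does not depend on $\eta$: by Lemma \ref{lem:Finn}, $F_n(\eta,\eta) = p(\langle\eta,\eta\rangle) = p(1)$, a single number.

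Finally I would compute $c$ by integrating $c = \sum_{j} Y_{n,j}(\eta)^2$ over the sphere and invoking the orthonormality \eqref{eq:orth},
\[
    c\,\Omega_{p-1} = \underset{S^{p-1}}{\int} \sum_{j=1}^{N(p,n)} Y_{n,j}(\eta)^2 \, d\Omega_{p-1} = \sum_{j=1}^{N(p,n)} 1 = N(p,n),
\]
so $c = N(p,n)/\Omega_{p-1}$; in particular $c \neq 0$, which retroactively justifies the normalization above (this computation uses only the definition of $F_n$ and orthonormality, not the Legendre identity, so it can legitimately be done first). Substituting back gives $F_n(\xi,\eta) = \tfrac{N(p,n)}{\Omega_{p-1}} P_n(\langle\xi,\eta\rangle)$, which rearranges to the claimed identity. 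The structural part — that the sum must be a Legendre polynomial — is handed to us by Theorem \ref{thm:leg}, so the only real substance is the normalization step, together with the mild care of verifying that $c$ is a nonzero constant independent of $\eta$.
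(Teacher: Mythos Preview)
Your proof is correct and follows essentially the same route as the paper: identify $F_n(\cdot,\eta)$ as a degree-$n$ spherical harmonic invariant under the stabilizer of $\eta$, invoke the uniqueness in Theorem~\ref{thm:leg} to get $F_n(\xi,\eta)=F_n(\eta,\eta)\,P_n(\langle\xi,\eta\rangle)$, and then compute the constant $F_n(\eta,\eta)=N(p,n)/\Omega_{p-1}$ by integrating over the sphere and using orthonormality. The only cosmetic difference is that the paper fixes $\eta=(1,0,\ldots,0)$ at the outset, whereas you keep $\eta$ arbitrary and note that Theorem~\ref{thm:leg} transfers to any unit vector by rotation; your explicit remark that $c\neq 0$ is needed to justify dividing by $F_n(\eta,\eta)$ is a nice bit of extra care.
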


\begin{proof}
  Since \eqref{eq:addthm} is invariant under coordinate rotations, we can choose $\eta =
  (1,0,\ldots,0)$. Consider again the function $F_n(\xi, \eta)$
  defined in \eqref{eq:F}. Since we have already defined the
  vector $\eta$, we will think of it as a fixed parameter in the
  function $F_n$. For now, we will write $F_n(\xi;\eta)$ and think of $F_n$
  as a function of one vector $\xi$.

  First, notice that $F_n$, being a linear combination of
  spherical harmonics $Y_{n,j}(\xi)$ (since we consider the
  $Y_{n,j}(\eta)$ to be constants), is itself a spherical
  harmonic, i.e., the restriction of a harmonic homogeneous
  polynomial to the unit sphere. Next, notice that any
  rotation of coordinates $R$ that leaves $\eta$ fixed leaves the
  function $F_n$ invariant. Indeed, using Lemma
  \ref{prop:Finv} or Lemma \ref{lem:Finn},
  \[
    F(R\xi; \eta) = F(R\xi;R\eta) = F(\xi;\eta) \text{ for all } R \text{ such that } R\eta = \eta.
  \]
  Notice that in the application of the rotation $R$, we only rotate the coordinates of $\xi$ in $F_n$ since we consider
  $\eta$ a fixed parameter, though this made no difference in the  calculation.

  Let us normalize the function $F_n$ by dividing it by   the constant $F_n(\eta;\eta)$.
  Thus, we have found a function
  $F_n(\xi;\eta) / F_n(\eta;\eta)$ that obeys
  all the properties of the Legendre polynomial described in the
  statement of Theorem \ref{thm:leg}. Since, by the same theorem,
  these properties uniquely define the Legendre polynomial, we
  conclude
  \[
    P_n(\langle \xi, \eta \rangle) =
    {F_n(\xi,\eta) \over F_n(\eta,\eta)}.
  \]
  To complete the proof, we will compute $F_n(\eta,\eta)$. Since, by Lemma \ref{lem:Finn}, $F_n(\eta,\eta)$ depends only on the
  inner product $\langle \eta, \eta \rangle = 1$, it is a constant. Thus,
  \begin{align*}
      F_n(\eta,\eta)\Omega_{p-1} &= \underset{\eta \in S^{p-1}}{\int} F_n(\eta,\eta)\, d\Omega_{p-1} \\
                                                    &= \underset{\eta \in S^{p-1}}{\int} \sum_{j=1}^{N(p,n)}Y_{n,j}(\eta)^2\, d\Omega_{p-1}\\
                                                    &= \sum_{j=1}^{N(p,n)} \underset{\eta \in S^{p-1}}{\int} Y_{n,j}(\eta)^2\, d\Omega_{p-1}
                                                    = N(p,n),
  \end{align*}
  using the orthonormality of the $n$-th degree spherical
  harmonics. We see that
  \[
    P_n(\langle \xi, \eta \rangle) = {F_n(\xi, \eta) \over
    F_n(\eta, \eta)} = {\Omega_{p-1} \over N(p,n)}
    \sum_{j=1}^{N(p,n)} Y_{n,j}(\xi) Y_{n,j}(\eta),
  \]
  and we are finished. \qed
\end{proof}

In addition, we can expand spherical harmonics in terms of
Legendre polynomials. To show this, we will need first the
following result.

\begin{lemma}
\label{lem:det}
  For any set $\{ Y_{n,j} \}_{j=1}^k$ of $k  \leq N(p,n)$ linearly
  independent $n$-th degree spherical harmonics, there exists a
  set $\{ \eta_i \}_{i=1}^k$ of unit vectors such that the
  $k \times k$ determinant
  \begin{equation}
    \left\vert
    \begin{array}{cccc}
    Y_{n,1}(\eta_1) & Y_{n,1}(\eta_2) & \cdots & Y_{n,1}(\eta_k)\\
    Y_{n,2}(\eta_1) & Y_{n,2}(\eta_2) & \cdots & Y_{n,2}(\eta_k)\\
        \vdots      &     \vdots      & \ddots &      \vdots    \\
    Y_{n,k}(\eta_1) & Y_{n,k}(\eta_2) & \cdots & Y_{n,k}(\eta_k)
    \end{array}
    \right\vert
    \label{eq:det3}
  \end{equation}
is nonzero.
\end{lemma}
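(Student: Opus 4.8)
The plan is to argue by induction on $k$, the key point being that linear independence of the $Y_{n,j}$ as functions on $S^{p-1}$ means every nontrivial linear combination of them is a function that fails to vanish somewhere. The base case $k=1$ is immediate: a single member of a linearly independent set is a nonzero function, so there is some $\eta_1 \in S^{p-1}$ with $Y_{n,1}(\eta_1) \neq 0$, and the $1\times 1$ determinant is nonzero.

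For the inductive step, suppose I have already produced unit vectors $\eta_1, \ldots, \eta_{k-1}$ for which the $(k-1)\times(k-1)$ determinant built from $Y_{n,1}, \ldots, Y_{n,k-1}$ at those points is nonzero. I then regard the last column of the desired $k\times k$ determinant as a function of a variable unit vector $\xi$; that is, I define
\[
  \Phi(\xi) = \det
  \begin{pmatrix}
    Y_{n,1}(\eta_1) & \cdots & Y_{n,1}(\eta_{k-1}) & Y_{n,1}(\xi)\\
    \vdots & \ddots & \vdots & \vdots\\
    Y_{n,k}(\eta_1) & \cdots & Y_{n,k}(\eta_{k-1}) & Y_{n,k}(\xi)
  \end{pmatrix}.
\]
Expanding $\Phi(\xi)$ along its last column gives $\Phi(\xi) = \sum_{i=1}^{k} (-1)^{i+k}\, D_i\, Y_{n,i}(\xi)$, where $D_i$ is the minor obtained by deleting row $i$ and the last column. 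Thus $\Phi$ is a fixed linear combination of the functions $Y_{n,1}, \ldots, Y_{n,k}$.

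The crux is to verify that this linear combination is nontrivial. The coefficient of $Y_{n,k}(\xi)$ corresponds to $i=k$ and equals $(-1)^{2k} D_k = D_k$, and $D_k$ is exactly the $(k-1)\times(k-1)$ determinant from the inductive hypothesis, hence nonzero. Since the $Y_{n,i}$ for $1 \le i \le k$ are linearly independent, a linear combination with a nonzero coefficient cannot be the zero function, so $\Phi \not\equiv 0$ on $S^{p-1}$. Therefore there exists $\eta_k \in S^{p-1}$ with $\Phi(\eta_k) \neq 0$, and by construction $\Phi(\eta_k)$ is precisely the $k\times k$ determinant in \eqref{eq:det3}. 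This completes the induction.

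The only genuinely delicate step is the observation that the cofactor of the newly introduced harmonic is the nonvanishing minor carried over from the previous stage; everything else is bookkeeping with cofactor expansion and the definition of linear independence. I would also note in passing that this is the natural higher-dimensional analogue of the elementary fact that $k$ linearly independent functions can always be separated by $k$ sample points, so no deeper property of spherical harmonics beyond their linear independence is needed.
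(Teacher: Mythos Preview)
Your proof is correct and follows essentially the same approach as the paper's: induction on $k$, with the inductive step handled by treating the last column as a function of a variable point $\xi$, expanding along that column, and using the nonvanishing $(k-1)\times(k-1)$ minor from the induction hypothesis as the coefficient of $Y_{n,k}(\xi)$ to conclude the linear combination is nontrivial. The paper's argument is identical in structure and detail.
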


\begin{proof}
  We will prove the lemma by induction on $k$.
  First, consider a linearly independent set $\{Y_{n,1}\}$ of one spherical harmonic of degree $n$.
  $Y_{n,1}$ cannot be the zero function. Then, there exists a unit vector, call it $\eta_1$, such that the
  determinant $|Y_{n,1}(\eta_1)| = Y_{n,1}(\eta_1) \neq 0$. Thus, the lemma holds for the case $k=1$. Now, suppose the lemma holds for some
  $k=\ell-1 \leq N(p,n)-1$, and let $\{ Y_{n,j} \}_{j=1}^\ell$ be a set of $\ell \leq N(p,n)$ linearly
  independent $n$-th degree spherical harmonics. By the induction hypothesis, we can choose a
  set $\{ \eta_i \}_{i=1}^{\ell-1}$ of unit vectors such that the
  $(\ell-1) \times (\ell-1)$ determinant
  \begin{equation}
  \label{eq:det}
     \Delta_\ell =
    \left\vert
    \begin{array}{cccc}
    Y_{n,1}(\eta_1)      & Y_{n,1}(\eta_2)      & \cdots & Y_{n,1}(\eta_{\ell-1}) \\
    Y_{n,2}(\eta_1)      & Y_{n,2}(\eta_2)      & \cdots & Y_{n,2}(\eta_{\ell-1}) \\
        \vdots           &     \vdots           & \ddots &          \vdots        \\
    Y_{n,\ell-1}(\eta_1) & Y_{n,\ell-1}(\eta_2) & \cdots & Y_{n,\ell-1}(\eta_{\ell-1}) \\
    \end{array}
    \right\vert \neq 0.
  \end{equation}
  Now consider the spherical harmonic defined by the $\ell \times \ell$
  determinant,
  \begin{equation}
    \Delta =
    \left\vert
    \begin{array}{ccccc}
    Y_{n,1}(\eta_1)      & Y_{n,1}(\eta_2)      & \cdots & Y_{n,1}(\eta_{\ell-1})      & Y_{n,1}(\xi)          \\
    Y_{n,2}(\eta_1)      & Y_{n,2}(\eta_2)      & \cdots & Y_{n,2}(\eta_{\ell-1})      & Y_{n,2}(\xi)           \\
        \vdots           &     \vdots           & \ddots &          \vdots             &     \vdots             \\
    Y_{n,\ell-1}(\eta_1) & Y_{n,\ell-1}(\eta_2) & \cdots & Y_{n,\ell-1}(\eta_{\ell-1}) & Y_{n,\ell-1}(\xi)\\
    Y_{n,\ell}(\eta_1)   & Y_{n,\ell}(\eta_2)   & \cdots & Y_{n,\ell}(\eta_{\ell-1}) & Y_{n,\ell}(\xi)\\
    \end{array}
    \right\vert.
    \label{eq:det2}
  \end{equation}
  If we compute this determinant by performing a cofactor  expansion down the last column and indicating by $\Delta_j$ the minor determinant
  corresponding to $Y_{n,j}(\xi)$,
  \[
        \Delta = \sum_{j=1}^\ell (-1)^{\ell+j} \, \Delta_j \, Y_{n,j}(\xi) ,
  \]
  we can see that we will have a  linear combination of spherical harmonics $Y_{n,j}(\xi)$ which are linearly independent. Thus, for the
  determinant to vanish identically, all the coefficients of the $Y_{n,j}(\xi)$ must vanish. But notice that
  the coefficient of $Y_{n,\ell}(\xi)$ is the determinant
  \eqref{eq:det} which does not vanish. Thus, the spherical
  harmonic given by the determinant \eqref{eq:det2} is not the
  zero function; i.e., there exists a unit vector $\xi = \eta_\ell$ such that the determinant \eqref{eq:det2} is nonzero.
  Therefore, the lemma holds for the case $k=\ell$ and, by induction, for all $k \leq N(p,n)$. \qed
\end{proof}

\begin{theorem}
  For any spherical harmonic $Y_n(\xi)$ of degree $n$, there exist
  coefficients $a_k$ and unit vectors
  $\eta_k$ such that
  \[
    Y_n(\xi) = \sum_{k=1}^{N(p,n)} a_k P_n(\langle \xi, \eta_k
    \rangle).
  \]
  \label{thm:SHexp}
\end{theorem}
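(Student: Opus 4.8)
The plan is to combine the Addition Theorem (Theorem \ref{thm:add}) with the nonvanishing determinant of Lemma \ref{lem:det}, thereby reducing the claim to the solvability of a square linear system. First I would fix the orthonormal basis $\{Y_{n,j}\}_{j=1}^{N(p,n)}$ of $n$-th degree spherical harmonics from \eqref{eq:orth}. Since this basis spans all spherical harmonics of degree $n$, the given $Y_n$ can be expanded as $Y_n(\xi) = \sum_{j=1}^{N(p,n)} b_j Y_{n,j}(\xi)$ for suitable constants $b_j$.

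Next I would rewrite the proposed right-hand side using the Addition Theorem. Because $P_n(\langle \xi, \eta_k \rangle) = \frac{\Omega_{p-1}}{N(p,n)} \sum_{j=1}^{N(p,n)} Y_{n,j}(\xi) Y_{n,j}(\eta_k)$, any combination $\sum_k a_k P_n(\langle \xi, \eta_k \rangle)$ equals $\frac{\Omega_{p-1}}{N(p,n)} \sum_{j=1}^{N(p,n)} Y_{n,j}(\xi) \big( \sum_{k} a_k Y_{n,j}(\eta_k) \big)$. Matching the coefficients of the linearly independent $Y_{n,j}(\xi)$ against $Y_n = \sum_j b_j Y_{n,j}$ converts the theorem into the requirement that there exist coefficients $a_k$ solving the system $\sum_{k=1}^{N(p,n)} Y_{n,j}(\eta_k)\, a_k = \frac{N(p,n)}{\Omega_{p-1}}\, b_j$ for every $j$.

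Finally I would invoke Lemma \ref{lem:det} with $k = N(p,n)$. The orthonormal basis is a maximal linearly independent set of $n$-th degree spherical harmonics, so the lemma furnishes unit vectors $\eta_1, \ldots, \eta_{N(p,n)}$ for which the matrix $M$ with entries $M_{jk} = Y_{n,j}(\eta_k)$ is nonsingular. The linear system above has precisely this coefficient matrix $M$, so it admits a (unique) solution $(a_1, \ldots, a_{N(p,n)})$, and the corresponding combination of Legendre polynomials reproduces $Y_n$.

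The main obstacle is guaranteeing that the coefficient matrix is invertible, and this is exactly what Lemma \ref{lem:det} supplies once one recognizes that the system's matrix is the determinant matrix of that lemma applied to the full orthonormal basis. The only real care required is bookkeeping: the vectors $\eta_k$ must be selected once via the lemma \emph{before} solving for the $a_k$, and the scalar factor $\Omega_{p-1}/N(p,n)$ coming from the Addition Theorem must be carried through so that the resulting coefficients are correctly scaled.
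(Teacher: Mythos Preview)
Your proposal is correct and follows essentially the same route as the paper: both use the Addition Theorem to express $P_n(\langle \xi,\eta_k\rangle)$ in the orthonormal basis, invoke Lemma~\ref{lem:det} with $k=N(p,n)$ to obtain a nonsingular coefficient matrix, and then solve the resulting linear system. The only cosmetic difference is that the paper first inverts the system to express each basis element $Y_{n,\ell}$ as a combination of the $P_n(\langle\xi,\eta_m\rangle)$ and then expands a general $Y_n$, whereas you set up the system directly for $Y_n$; the substance is identical.
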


\begin{proof}
    Let $\{ Y_{n,j}(\xi) \}_{j=1}^{N(p,n)}$ be an orthonormal set of $n$-th degree spherical harmonics, and let $Y_n(\xi)$ be any spherical
    harmonic of degree $n$. For a unit vector $\eta$,  we can write
  \begin{equation}
    P_n(\langle \xi, \eta \rangle) = {\Omega_{p-1} \over N(p,n)}
    \sum_{j=1}^{N(p,n)} Y_{n,j}(\xi)Y_{n,j}(\eta),
    \label{eq:add}
  \end{equation}
  by Theorem \ref{thm:add}. Let us choose a set of unit vectors $\eta_j$ such that the determinant
  \eqref{eq:det3} with $k=N(p,n)$ is nonzero; this is possible by Lemma
  \ref{lem:det}. Replacing $\eta$ by $\eta_j$ in \eqref{eq:add} for $1 \leq j \leq
  N(p,n)$ creates the system of equations
  \[
    {N(p,n) \over \Omega_{p-1}}\!\!
    \left(\!\!\!
        \begin{array}{c}
            P_n(\langle \xi, \eta_1 \rangle) \\ P_n(\langle \xi,
            \eta_2
            \rangle)\\ \vdots \\ P_n(\langle \xi, \eta_{N(p,n)} \rangle)
        \end{array}
    \!\!\!\! \right) \!\!=\!\!
    \left(\!\!\!
    \begin{array}{ccc}
    Y_{n,1}(\eta_1) & \cdots & Y_{n,N(p,n)}(\eta_1)\\
    Y_{n,1}(\eta_2) & \cdots & Y_{n,N(p,n)}(\eta_2)\\
        \vdots      & \ddots &      \vdots    \\
    Y_{n,1}(\eta_{N(p,n)}) & \cdots & Y_{n,N(p,n)}(\eta_{N(p,n)})\\
    \end{array}
    \!\!\!\!\right)\!\!\!
    \left(\!\!\!
        \begin{array}{c}
            Y_{n,1}(\xi) \\ Y_{n,2}(\xi) \\ \vdots \\ Y_{n,N(p,n)}(\xi)
        \end{array}
    \!\!\!\!\right).
  \]
  The determinant of the $N(p,n) \times N(p,n)$ coefficient matrix
  on the right-hand side of this system has nonzero determinant by
  our choice of the vectors $\eta_j$. Thus, this system is
  invertible; i.e., there exist coefficients
  $c_{\ell, m}$ such that
  \begin{equation}
    Y_{n, \ell}(\xi) = \sum_{m=1}^{N(p,n)} c_{\ell,m} P_n(\langle \xi,
    \eta_m
    \rangle), \text{ for each } 1 \leq \ell \leq N(p,n).
    \label{eq:almost}
  \end{equation}
  Now since any spherical harmonic, and in particular $Y_n(\xi)$, can be expanded in terms of the
  basis functions $\{ Y_{n,i}(\xi) \}_{i=1}^{N(p,n)}$, there
  exist coefficients $a_k$ such that
  \[
    Y_n(\xi) = \sum_{k=1}^{N(p,n)} a_k P_n(\langle \xi, \eta_k
    \rangle),
  \]
  by \eqref{eq:almost}, and the theorem is proved. \qed
\end{proof}

We will now list and prove several basic properties of spherical
harmonics and Legendre polynomials, some of which are useful for
making estimates. The proofs are straightforward.

\begin{lemma}
  For any spherical harmonic $Y_n(\xi)$,
  \begin{equation}
    Y_n(\xi) = {N(p,n) \over \Omega_{p-1}} \underset{\eta \in S^{p-1}}{\int}
    Y_n(\eta) P_n(\langle \xi, \eta \rangle)\, d\Omega_{p-1}.
    \label{eq:goal}
  \end{equation}
  \label{lem:Yint}
\end{lemma}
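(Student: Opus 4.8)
The plan is to reduce the identity to the Addition Theorem (Theorem~\ref{thm:add}) together with the orthonormality relation~\eqref{eq:orth}, so that essentially no computation is required. I would fix an orthonormal set $\{Y_{n,j}(\xi)\}_{j=1}^{N(p,n)}$ of $n$-th degree spherical harmonics as in~\eqref{eq:orth}. Since this set is a maximal linearly independent collection, it serves as a basis for the space of all degree-$n$ spherical harmonics, so I would first expand the given $Y_n$ as
\[
    Y_n(\xi) = \sum_{j=1}^{N(p,n)} c_j\, Y_{n,j}(\xi),
\]
and then use orthonormality to identify the coefficients as $c_j = \int_{S^{p-1}} Y_n(\eta) Y_{n,j}(\eta)\, d\Omega_{p-1}$.

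Next I would attack the right-hand side of the claimed identity directly. Substituting the Addition Theorem expression for $P_n(\langle \xi, \eta \rangle)$ gives
\[
    {N(p,n) \over \Omega_{p-1}} \underset{\eta \in S^{p-1}}{\int} Y_n(\eta)\, {\Omega_{p-1} \over N(p,n)} \sum_{j=1}^{N(p,n)} Y_{n,j}(\xi) Y_{n,j}(\eta)\, d\Omega_{p-1}.
\]
The prefactors $N(p,n)/\Omega_{p-1}$ and $\Omega_{p-1}/N(p,n)$ cancel, and since the $Y_{n,j}(\xi)$ do not depend on the integration variable $\eta$, I can pull the finite sum outside the integral to arrive at
\[
    \sum_{j=1}^{N(p,n)} Y_{n,j}(\xi) \underset{\eta \in S^{p-1}}{\int} Y_n(\eta) Y_{n,j}(\eta)\, d\Omega_{p-1} = \sum_{j=1}^{N(p,n)} c_j\, Y_{n,j}(\xi),
\]
which is precisely the expansion of $Y_n(\xi)$ from the first step. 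This closes the argument.

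I expect no genuinely difficult step: the lemma is a bookkeeping consequence of the Addition Theorem and orthonormality. The only points requiring a moment's care are the legitimacy of the expansion in the first step, which rests on the fact that $\{Y_{n,j}\}$ spans all degree-$n$ spherical harmonics, and the interchange of the finite sum with the integral, which is immediate because the sum has only $N(p,n)$ terms. Everything else is cancellation of constants.
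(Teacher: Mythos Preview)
Your proof is correct and follows essentially the same approach as the paper: expand $Y_n$ in the orthonormal basis $\{Y_{n,j}\}$, substitute the Addition Theorem for $P_n(\langle\xi,\eta\rangle)$, and use orthonormality to recover the expansion. The only cosmetic difference is that the paper inserts the expansion $Y_n(\eta)=\sum_j a_j Y_{n,j}(\eta)$ into the integrand and then uses $\int Y_{n,j}Y_{n,k}\,d\Omega_{p-1}=\delta_{jk}$, whereas you leave $Y_n(\eta)$ intact and recognize $\int Y_n(\eta)Y_{n,j}(\eta)\,d\Omega_{p-1}$ directly as the coefficient $c_j$; the two computations are equivalent.
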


\begin{proof}
  Let $Y_n(\xi)$ be any $n$-th degree spherical harmonic, and let
  $\{Y_{n,j}\}_{j=1}^{N(p,n)}$ be an orthonormal set of such
  functions. Then, we can expand $Y_n(\eta)$ in terms of this
  basis; i.e., for some coefficients $a_j$,
  \[
    Y_n(\eta) = \sum_{j=1}^{N(p,n)} a_j Y_{n,j}(\eta).
  \]
  Using this expansion  and Theorem \ref{thm:add} to rewrite $P_n(\langle \xi, \eta \rangle)$, the right-hand side of
  \eqref{eq:goal} becomes
  \begin{align*}
    &{N(p,n) \over \Omega_{p-1}}  \underset{\eta \in S^{p-1}}{\int} \left[ \sum_{j=1}^{N(p,n)} a_j Y_{n,j}(\eta)
    \right] \left[ {\Omega_{p-1} \over N(p,n)}
    \sum_{k=1}^{N(p,n)} Y_{n,k}(\xi)Y_{n,k}(\eta) \right] \,
    d\Omega_{p-1},
  \end{align*}
  or
  \begin{align*}
    \sum_{j,k=1}^{N(p,n)} a_j Y_{n,k}(\xi) \left[
    \underset{\eta \in S^{p-1}}{\int} Y_{n,j}(\eta)Y_{n,k}(\eta) \, d\Omega_{p-1}
    \right] = \sum_{j=1}^{N(p,n)} a_j Y_{n,j}(\xi) = Y_n(\xi),
  \end{align*}
  as required. \qed
\end{proof}

\begin{proposition}
  The  Legendre polynomials $P_n(t)$ are bounded
\begin{align}
     |P_n(t)| \leq 1, \text{ for all } t \in [0,1]
  \end{align}
  and obey the following normalization condition
  \begin{align}
     \underset{\xi \in S^{p-1}}{\int} P_n(\langle \xi, \eta\rangle)^2 \, d\Omega_{p-1} = {\Omega_{p-1} \over N(p,n)}.\label{eq:prop}
\end{align}
\end{proposition}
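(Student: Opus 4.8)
The plan is to derive both assertions from the Addition Theorem (Theorem \ref{thm:add}), which already expresses $P_n$ through an orthonormal basis of spherical harmonics. I would begin with a preliminary identity that drives both arguments: for every unit vector $\zeta$,
\[
    \sum_{j=1}^{N(p,n)} Y_{n,j}(\zeta)^2 = {N(p,n) \over \Omega_{p-1}}.
\]
This follows by setting $\xi = \eta = \zeta$ in \eqref{eq:addthm} and using $P_n(1) = 1$, which was recorded in the remark following Theorem \ref{thm:leg}.

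Next I would establish the normalization condition, the most direct application. Squaring \eqref{eq:addthm} and integrating over $\xi \in S^{p-1}$, the resulting double sum is controlled by orthonormality \eqref{eq:orth}: the factor $\int_{S^{p-1}} Y_{n,j}(\xi) Y_{n,k}(\xi)\, d\Omega_{p-1} = \delta_{jk}$ collapses the cross terms, leaving
\[
    \underset{S^{p-1}}{\int} P_n(\langle \xi, \eta \rangle)^2 \, d\Omega_{p-1} = \left({\Omega_{p-1} \over N(p,n)}\right)^2 \sum_{j=1}^{N(p,n)} Y_{n,j}(\eta)^2.
\]
Substituting the preliminary identity with $\zeta = \eta$ yields exactly $\Omega_{p-1} / N(p,n)$, as claimed.

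For the bound, I would apply the Cauchy-Schwarz inequality to the sum in \eqref{eq:addthm}:
\[
    |P_n(\langle \xi, \eta \rangle)| \leq {\Omega_{p-1} \over N(p,n)} \left( \sum_{j=1}^{N(p,n)} Y_{n,j}(\xi)^2 \right)^{1/2} \left( \sum_{j=1}^{N(p,n)} Y_{n,j}(\eta)^2 \right)^{1/2}.
\]
Each factor equals $(N(p,n)/\Omega_{p-1})^{1/2}$ by the preliminary identity, so the right-hand side simplifies to $1$. Since every $t \in [-1,1]$ arises as $\langle \xi, \eta \rangle$ for suitable unit vectors $\xi, \eta$ (exactly as in the proof of Lemma \ref{lem:Finn}), this gives $|P_n(t)| \leq 1$ on all of $[-1,1]$, and in particular on $[0,1]$.

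I do not anticipate a genuine obstacle here; both statements reduce to bookkeeping once the Addition Theorem is in hand. The one step deserving care is the preliminary identity $\sum_j Y_{n,j}(\zeta)^2 = N(p,n)/\Omega_{p-1}$, since it is precisely what makes the orthonormality collapse in the normalization and the Cauchy-Schwarz estimate in the bound both close cleanly.
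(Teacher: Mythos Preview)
Your proposal is correct and follows essentially the same approach as the paper: both arguments rest on the Addition Theorem, with Cauchy--Schwarz giving the bound and orthonormality collapsing the double sum for the normalization. The only difference is presentational---you isolate the identity $\sum_j Y_{n,j}(\zeta)^2 = N(p,n)/\Omega_{p-1}$ as a preliminary lemma, whereas the paper recovers it inline each time by recognizing the diagonal sum as $P_n(\langle \zeta,\zeta\rangle) = P_n(1) = 1$ via a second application of the Addition Theorem.
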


\begin{proof}
  We  rewrite $P_n(t)^2$ using Theorem   \ref{thm:add}. Then, we use the Cauchy-Schwarz inequality, viewing the sum of products in this expression
  as  a dot product, to derive the required result:
        \begin{align*}
             P_n(\langle \xi, \eta \rangle)^2 &= \left[ {\Omega_{p-1} \over N(p,n)} \sum_{j=1}^{N(p,n)} Y_{n,j}(\xi) Y_{n,j}(\eta) \right]^2 \\
                                                                 & \leq \left[ {\Omega_{p-1} \over N(p,n)} \sum_{j=1}^{N(p,n)} Y_{n,j}(\xi)^2 \right]
                                                                     \left[ {\Omega_{p-1} \over N(p,n)} \sum_{j=1}^{N(p,n)} Y_{n,j}(\eta)^2 \right], \label{eq:prop}
        \end{align*}
        so
        \begin{align*}
             P_n&(\langle \xi, \eta \rangle)^2 \leq P_n(\langle \xi , \xi \rangle) P_n(\langle \eta , \eta \rangle)= P_n(1)^2 = 1,
        \end{align*}
proving the first result.

 Again, we  use Theorem \ref{thm:add} to rewrite the integral on the left side of \eqref{eq:prop} as
        \begin{align*}
            \underset{\xi \in S^{p-1}}{\int}
            \!\!\!\! {\Omega_{p-1}^2 \over N(p,n)^2}
            \sum_{j,k=1}^{N(p,n)}
            Y_{n,j}(\xi)Y_{n,j}(\eta)Y_{n,k}(\xi)Y_{n,k}(\eta) \, d\Omega_{p-1}
        \end{align*}
        which then becomes
        \[
            {\Omega_{p-1}^2 \over N(p,n)^2}
            \sum_{j=1}^{N(p,n)} Y_{n,j}(\eta)Y_{n,j}(\eta) = {\Omega_{p-1} \over
            N(p,n)} P_n(\langle \eta, \eta \rangle) = {\Omega_{p-1} \over N(p,n)},
        \]
 thus proving the second result.
 \qed
 \end{proof}

  \begin{proposition}
  The spherical harmonics $Y_n(\xi)$ satisfy the following inequality
  \begin{align}
     |Y_n(\xi)| \leq \sqrt{{N(p,n) \over \Omega_{p-1}}\, \underset{\eta \in S^{p-1}}{\int} Y_n(\eta)^2\, d\Omega_{p-1}}.
  \end{align}
\end{proposition}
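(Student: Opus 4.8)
The plan is to obtain this pointwise bound directly from the integral representation of $Y_n$ established in Lemma \ref{lem:Yint}, combined with the Cauchy--Schwarz inequality. First I would recall that Lemma \ref{lem:Yint} expresses the value of the spherical harmonic at a point $\xi$ as
\[
    Y_n(\xi) = {N(p,n) \over \Omega_{p-1}} \underset{\eta \in S^{p-1}}{\int} Y_n(\eta)\,P_n(\langle \xi, \eta \rangle)\, d\Omega_{p-1},
\]
so that $Y_n(\xi)$ appears as a constant multiple of an inner product over the sphere between the two functions $\eta \mapsto Y_n(\eta)$ and $\eta \mapsto P_n(\langle \xi, \eta \rangle)$, with $d\Omega_{p-1}$ playing the role of the (unit-weight) integration measure.

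Next I would apply the Cauchy--Schwarz inequality to this inner product, which gives
\[
    |Y_n(\xi)| \leq {N(p,n) \over \Omega_{p-1}} \left( \underset{\eta \in S^{p-1}}{\int} Y_n(\eta)^2\, d\Omega_{p-1} \right)^{\!\!1/2} \left( \underset{\eta \in S^{p-1}}{\int} P_n(\langle \xi, \eta \rangle)^2\, d\Omega_{p-1} \right)^{\!\!1/2}.
\]
The decisive simplification comes from the normalization condition established in the preceding proposition, namely $\int_{S^{p-1}} P_n(\langle \xi, \eta \rangle)^2\, d\Omega_{p-1} = \Omega_{p-1}/N(p,n)$, which eliminates the Legendre factor in favor of the known constants $\Omega_{p-1}$ and $N(p,n)$.

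Substituting this value, the prefactor collapses, since ${N(p,n) \over \Omega_{p-1}} \cdot \sqrt{\Omega_{p-1}/N(p,n)} = \sqrt{N(p,n)/\Omega_{p-1}}$; drawing this square root under a common radical with the remaining $Y_n$ integral then produces exactly the asserted inequality. I do not anticipate any real obstacle here: both ingredients, the representation lemma and the normalization of $P_n$, are already in hand, and Cauchy--Schwarz supplies the single inequality, so the only genuine work is the routine bookkeeping of verifying that the constants combine as claimed.
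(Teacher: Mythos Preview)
Your proposal is correct and follows essentially the same approach as the paper: start from the integral representation in Lemma~\ref{lem:Yint}, apply Cauchy--Schwarz to the integral, and then use the normalization $\int_{S^{p-1}} P_n(\langle \xi,\eta\rangle)^2\,d\Omega_{p-1} = \Omega_{p-1}/N(p,n)$ to simplify the constants. The paper squares first and works with $Y_n(\xi)^2$, but this is only a cosmetic difference.
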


\begin{proof}
        We start by taking the square of equation \eqref{eq:goal}:
        \[
            Y_n(\xi)^2 = {N(n,p)^2 \over \Omega_{p-1}^2} \left[ \underset{\eta \in S^{p-1}}{\int}
                                 Y_n(\eta)P_n(\langle \xi, \eta \rangle) \,     d\Omega_{p-1}\right]^2.
        \]
        Viewing the integral as an inner product, we apply the Cauchy-Schwarz inequality,
        \begin{align*}
            Y_n(\xi)^2 &\leq {N(n,p)^2 \over \Omega_{p-1}^2} \left[ \underset{\eta \in S^{p-1}}{\int}
                Y_n(\eta)^2\,d\Omega_{p-1}\right] \left[ \underset{\eta \in S^{p-1}}{\int}
                P_n(\langle \xi, \eta \rangle)^2 \,  d\Omega_{p-1}\right].
        \end{align*}
        Thus,
        \begin{align*}
            Y_n(\xi)^2 &\leq {N(p,n) \over \Omega_{p-1}} \underset{S^{p-1}}{\int} Y_n(\eta)^2\, d\Omega_{p-1},
        \end{align*}
        where we used property (\ref{eq:prop}) in the last step.
\qed
\end{proof}

  We will now begin to investigate the properties of Legendre polynomials as orthogonal polynomials.
  Let us rewrite the integral in (\ref{eq:prop}).
  Note that the integrand depends only on the
  inner product $\langle \xi, \eta \rangle$ and that we integrate $\xi$ over the surface of the
  $(p-1)$-sphere. We can take advantage of these observations to reduce the $(p-1)$-dimensional integral to a
  one-dimensional integral.

  Since we integrate over the entire sphere, we can perform any
  rotation of coordinates without changing the value of the
  integral. Let us impose a coordinate change $R$ that aligns the unit vector $\eta$
  along the $x_p$-axis, which we will picture pointing ``north,''
  i.e., take $\eta = (0, \ldots, 0, 1)$. If we let $t = \langle \xi, \eta \rangle$, we can write the unit vector $\xi$ as
  \[
    \xi = \langle \xi, \eta \rangle \eta + \left( \xi - \langle \xi, \eta \rangle \eta
    \right)= t \eta + \sqrt{1-t^2} \: \nu,
  \]
  for some unit vector $\nu$ normal to $\eta$. Notice that any
  such $\nu$ gives the same value for the inner
  product $\langle \xi, \eta \rangle$ and thus the same value for
  the integrand in (\ref{eq:prop}). 
  Note further that the collection of all such vectors $\nu$,
  \[
       \{\nu \in \mathbb{R}^p : |\nu|=1, \langle \nu, \eta \rangle = 0\} ,
  \]
 forms  a parallel of the $(p-1)$-sphere which is  a $(p-2)$-sphere:
   \[
       \{ \nu\in \mathbb{R}^p : |\nu|=1, \nu_p = 0\} = S^{p-2} .
  \]
  A visual representation is depicted in Figure \ref{fig:sph}.

  \begin{figure}[h!]
    \begin{center}
        \setlength{\unitlength}{1mm}
        \begin{picture}(98,70)
            \put(0,0){\includegraphics[width=11cm]{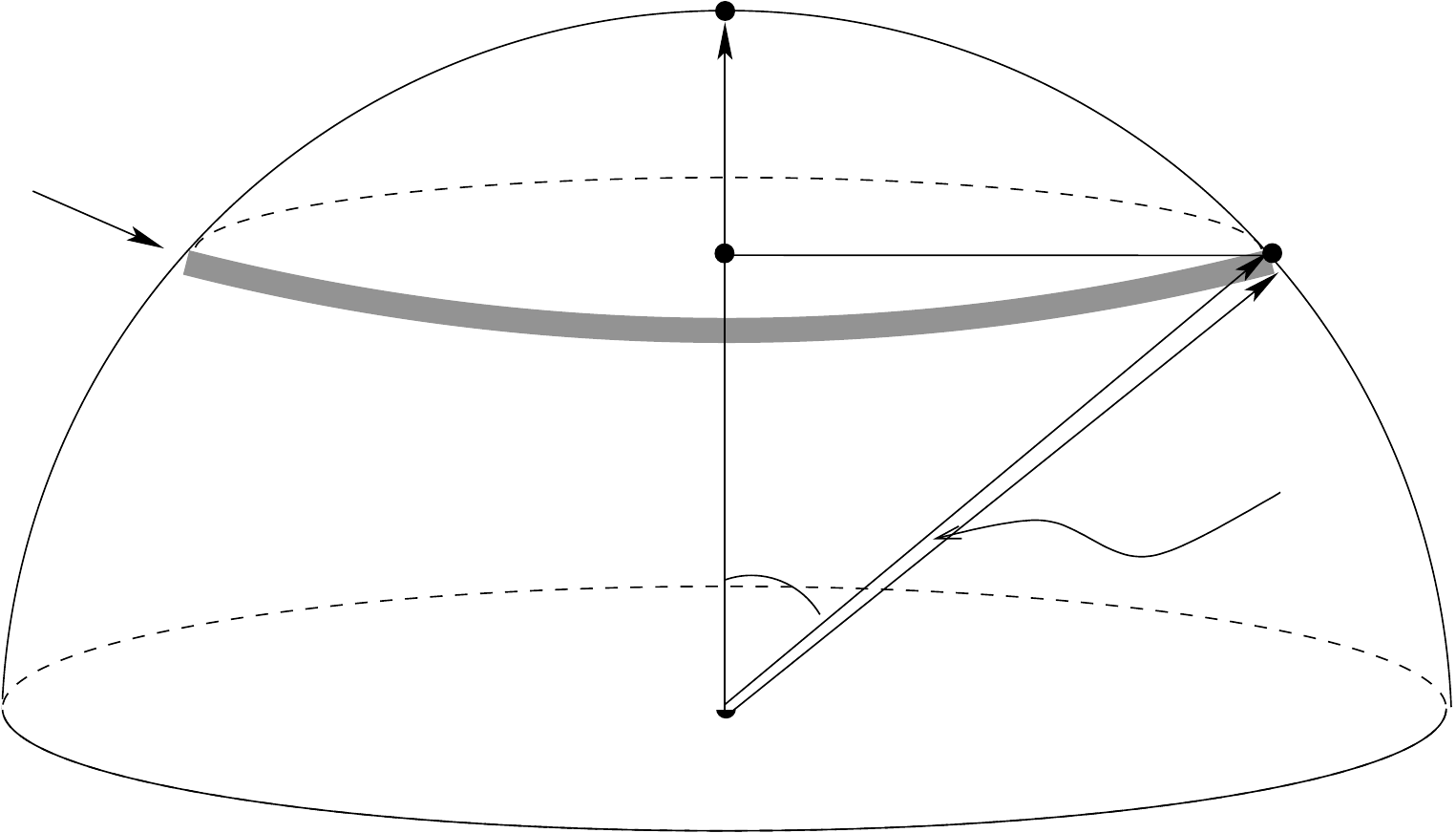}}
            \put(54,65){$\eta$}  \put(99,43){$\xi$} \put(59,20){\small $\theta$}
            \put (95,26){\small $d\theta$}
            \put(51,20){\small \rotatebox{90}{$t=\cos\theta$}}
            \put(63,45){\footnotesize $\sin\theta=\sqrt{1-t^2}$}
            \put(-18,52){\small $(\sin\theta)^{p-2} \, \Omega_{p-2}  \, d\theta$}
        \end{picture}
    \end{center}
    \caption{\footnotesize In the reduction of a spherically symmetric $(p-1)$-dimensional integral to a 1-dimensional integral,
                    we imagine the $(p-1)$-dimensional sphere as a sum of infinitesimal $(p-2)$-dimensional spheres (parallels) of
                    thickness $d\theta$.}
    \label{fig:sph}
  \end{figure}

To get some intuition, let's think of the familiar 3-dimensional case. The 2-dimensional sphere can be thought as a sum of infinitesimal rings
oriented along the parallels of the sphere. The infinitesimal ring defined by the azimuthal angle $\theta$ has an infinitesimal thickness $d\theta$ and,
therefore, it corresponds to a solid angle
$$
    d\Omega_2 =  (2\pi\sin\theta) \, d\theta .
$$
The term inside the parenthesis is the length of the ring; multiplied by its thickness it gives the ``area'' of the ring. We can rewrite $d\Omega_2$ as
$-2\pi d(\cos\theta)$ and thus
$$
    d\Omega_2  = - \Omega_1 \, dt
$$
in terms of the variable $t$.

Similarly, in $p$ dimensions
$$
   d\Omega_{p-1} = \Omega_{p-2} \, (\sin\theta)^{p-2} \, d\theta ~.
$$
The factor $(\sin\theta)^{p-2}$ is easy to explain: The radius of the $(p-2)$-sphere is $R=\sin\theta$; its volume will be proportional to
$R^{p-2}$. It is straightforward to express $d\Omega_{p-1}$ in terms of $t$:
\begin{align*}
    d\Omega_{p-1} &= -\Omega_{p-2} \, (\sin\theta)^{p-3} \, d (\cos\theta) \\
     &= - \Omega_{p-2} \, (1-t^2)^{p-3 \over 2} \, dt.
  \end{align*}

  Now we are ready to write (\ref{eq:prop}) as a 1-dimensional integral over $t$. Indeed,
  \[
    \underset{\xi \in S^{p-1}}{\int} P_n(\langle \xi, \eta\rangle)^2 \, d\Omega_{p-1}
          = \int_{-1}^1 P_n(t)^2  \, (1-t^2)^{p-3 \over 2} \Omega_{p-2}\,dt,
  \]
  which implies
  \[
    \int_{-1}^1 P_n(t)^2 \, (1-t^2)^{p-3 \over 2} \,dt = {\Omega_{p-1} \over N(p,n)\Omega_{p-2}}.
  \]
  This expression gives us the norm of the $n$-th Legendre
  polynomial with respect to the weight $w(t) = (1-t^2)^{p-3 \over 2}$,
  namely
  \begin{equation}
    \| P_n(t) \|_w = \sqrt{\langle P_n(t), P_n(t) \rangle_w} =
    \sqrt{{\Omega_{p-1} \over N(p,n) \, \Omega_{p-2}}}.
    \label{eq:LegNorm}
  \end{equation}

Note that in coming up with this fact, we have essentially proved the following lemma, which will be used
to show the next few results.

\begin{lemma}
  Let $\eta$ be a unit vector and $f$ be a function. Then
  \[
    \underset{\xi \in S^{p-1}}{\int} f(\langle \xi, \eta
    \rangle)\,d\Omega_{p-1} = \Omega_{p-2} \int_{-1}^1
    f(t)(1-t^2)^{(p-3)/2}\,dt.
  \]
  \label{lem:red}
\end{lemma}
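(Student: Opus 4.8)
The plan is to exploit the fact that the integrand depends on $\xi$ only through the single scalar $t = \langle \xi, \eta\rangle$, so that the $(p-1)$-dimensional integral collapses to a one-dimensional integral over $t$. In fact the entire measure computation has already been carried out in the preceding discussion for the special integrand $P_n(\langle \xi, \eta\rangle)^2$; the key observation is that nothing in that derivation used the specific form $P_n^2$, only that the integrand is a function of $t$ alone. So the proof is really a matter of isolating that computation and stating it for a general $f$.

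First I would invoke rotational invariance. Since the integration is over all of $S^{p-1}$ and $\langle R\xi, R\eta\rangle = \langle \xi, \eta\rangle$ for any rotation matrix $R$, I may rotate coordinates so that $\eta = (0,\ldots,0,1)$ points along the $x_p$-axis. Then $t = \langle \xi, \eta\rangle = \xi_p = \cos\theta$, where $\theta \in [0,\pi]$ is the polar angle measured from the north pole, and neither the value of the integral nor the integrand is affected by the rotation.

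Next I would decompose $S^{p-1}$ into its parallels. For each fixed $\theta$, the set of $\xi \in S^{p-1}$ with $\xi_p = \cos\theta$ is a $(p-2)$-sphere of radius $\sin\theta = \sqrt{1-t^2}$, whose surface area is therefore $\Omega_{p-2}(\sin\theta)^{p-2}$. This reproduces the surface-area element recorded just above, $d\Omega_{p-1} = \Omega_{p-2}(\sin\theta)^{p-2}\,d\theta = -\Omega_{p-2}(1-t^2)^{(p-3)/2}\,dt$. Because $f(\langle \xi, \eta\rangle)$ is constant along each parallel, integrating over a parallel merely multiplies $f(t)$ by that parallel's area, and integrating $\theta$ from $0$ to $\pi$ (equivalently $t$ from $1$ down to $-1$, the minus sign being absorbed when the limits of integration are reversed) gives
\[
  \underset{\xi \in S^{p-1}}{\int} f(\langle \xi, \eta\rangle)\,d\Omega_{p-1}
  = \Omega_{p-2}\int_0^\pi f(\cos\theta)(\sin\theta)^{p-2}\,d\theta
  = \Omega_{p-2}\int_{-1}^1 f(t)(1-t^2)^{(p-3)/2}\,dt,
\]
as claimed.

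The main obstacle is not any single calculation, since each ingredient has essentially appeared already, but rather justifying cleanly that the constancy of the integrand on each parallel lets me factor the area $\Omega_{p-2}(\sin\theta)^{p-2}$ out of the surface integral over that parallel. This is the Fubini-type slicing of $S^{p-1}$ into $(p-2)$-spheres of radius $\sin\theta$ that the figure and the preceding paragraphs render geometrically transparent, and I would lean on that intuitive decomposition rather than a fully measure-theoretic argument, consistent with the level of rigor elsewhere in the text.
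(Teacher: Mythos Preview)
Your proposal is correct and matches the paper's approach exactly: the paper does not even give a separate proof of this lemma, instead remarking that the preceding computation of $\int_{S^{p-1}} P_n(\langle\xi,\eta\rangle)^2\,d\Omega_{p-1}$ via the parallel decomposition $d\Omega_{p-1} = -\Omega_{p-2}(1-t^2)^{(p-3)/2}\,dt$ has already established it for general $f$. You have correctly identified that the specific integrand $P_n^2$ played no role in that derivation, which is precisely the paper's point.
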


\begin{theorem}
  Any two distinct Legendre polynomials $P_n(t),P_m(t)$
  are orthogonal over the interval $[-1,1]$ with
  respect to the weight $(1-t^2)^{p-3 \over 2}$. That is,
  \[
    \int_{-1}^1P_n(t)P_m(t)(1-t^2)^{p-3 \over 2} \,dt = 0, \quad
    \text{for } n \neq m.
  \]
  \label{thm:LegOrth}
\end{theorem}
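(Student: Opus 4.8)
The plan is to recognize the Legendre polynomial $P_n(\langle \xi, \eta \rangle)$ as a spherical harmonic in disguise and then invoke the orthogonality of spherical harmonics of different degrees. Fix a unit vector $\eta$. By Theorem~\ref{thm:leg}, the function $L_n(\xi) = P_n(\langle \xi, \eta \rangle)$ is the restriction to $S^{p-1}$ of a harmonic homogeneous polynomial of degree $n$; in other words, $P_n(\langle \cdot\, , \eta \rangle)$ is a spherical harmonic of degree $n$ in its first argument. Likewise $P_m(\langle \cdot\, , \eta \rangle)$ is a spherical harmonic of degree $m$. Since $n \neq m$, I would apply Theorem~\ref{thm:SHorth} directly to these two spherical harmonics to obtain
\[
    \underset{S^{p-1}}{\int} P_n(\langle \xi, \eta \rangle) P_m(\langle \xi, \eta \rangle)\, d\Omega_{p-1} = 0.
\]

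Next I would collapse this $(p-1)$-dimensional surface integral to a one-dimensional integral over $t$ by means of Lemma~\ref{lem:red}. Taking $f(t) = P_n(t) P_m(t)$, the integrand above is precisely $f(\langle \xi, \eta \rangle)$, so the lemma yields
\[
    \underset{S^{p-1}}{\int} P_n(\langle \xi, \eta \rangle) P_m(\langle \xi, \eta \rangle)\, d\Omega_{p-1} = \Omega_{p-2} \int_{-1}^1 P_n(t) P_m(t)(1-t^2)^{(p-3)/2}\, dt.
\]
Combining the two displayed equations and dividing through by the nonzero constant $\Omega_{p-2}$ delivers the claimed orthogonality.

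There is no serious obstacle here: the two cited results do all the heavy lifting, and the argument is essentially a translation of the already-established sphere orthogonality into the variable $t = \langle \xi, \eta \rangle$. The single point deserving care is the justification that $P_n(\langle \xi, \eta \rangle)$ genuinely \emph{is} a spherical harmonic of degree $n$, so that Theorem~\ref{thm:SHorth} applies; but this is exactly the content of Theorem~\ref{thm:leg}, which exhibits $P_n$ as $L_n|_{S^{p-1}}$ for a harmonic homogeneous polynomial $L_n$. It is worth observing that the weight $(1-t^2)^{(p-3)/2}$ emerging here is the Jacobi weight with $\alpha = \beta = (p-3)/2$, which aligns the $P_n$ with the Rodrigues-formula framework of Chapter~\ref{ch:pol} and anticipates their identification as classical orthogonal polynomials.
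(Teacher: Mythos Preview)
Your proof is correct and follows essentially the same route as the paper: identify $P_n(\langle \xi,\eta\rangle)$ as a spherical harmonic via Theorem~\ref{thm:leg}, apply the orthogonality of spherical harmonics (Theorem~\ref{thm:SHorth}), and then reduce the surface integral to a one-dimensional integral with Lemma~\ref{lem:red}. The only cosmetic difference is that the paper fixes $\eta=(1,0,\ldots,0)$ from the outset (as in the statement of Theorem~\ref{thm:leg}) rather than an arbitrary unit vector, but this is immaterial.
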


\begin{proof}
  Let $\eta = (1, 0, \ldots, 0)$. The Legendre polynomial
  $P_n(\langle \xi, \eta \rangle)$ is equal to the spherical harmonic $L_n(\xi)$ having
  the properties listed in Theorem \ref{thm:leg}. By Theorem
  \ref{thm:SHorth},
  \[
    0 = \underset{\xi \in S^{p-1}}{\int} L_n(\xi)L_m(\xi)\,d\Omega_{p-1} =
    \underset{\xi \in S^{p-1}}{\int} P_n(\langle \xi, \eta \rangle)P_m(\langle \xi, \eta \rangle)\,d\Omega_{p-1},
    \quad \text{for } n \neq m.
  \]
  Since the integral on the right-hand side of the above equation
  depends only on the inner product $\langle \xi, \eta \rangle$,
  we can use Lemma \ref{lem:red} to rewrite
  this integral as
  \[
    \Omega_{p-2} \int_{-1}^1P_n(t)P_m(t)(1-t^2)^{p-3 \over 2} \,dt = 0, \quad
    \text{for } n \neq m,
  \]
  thus completing the proof.
\qed
\end{proof}

    The above theorem allows us to use the results from Chapter \ref{ch:pol} to write down more properties of the
    Legendre polynomials. Using the comment just above Section \ref{sec:approx}, we can find the
    Rodrigues formula for the Legendre polynomials in $p$
    dimensions. Theorem \ref{thm:LegOrth} tells us that the Legendre polynomials are orthogonal
    with respect to $w(t) = (1-t^2)^{(p-3)/2}$ so that
    \begin{align*}
        P_n(t) &= c_n (1-t^2)^{-(p-3)/2} \left({d \over dt}\right)^n
        \left[ (1-t^2)^{(p-3)/2}(1-t^2)^n \right] \\
        &= c_n (1-t^2)^{(3-p)/2} \left({d \over dt}\right)^n
        (1-t^2)^{n+(p-3)/2},
    \end{align*}
    and it remains to compute $c_n$. We know that $P_n(1)=1$, so
    \[
      1 = \: c_n (1-t^2)^{(3-p)/2} \left({d \over dt}\right)^n
        (1-t^2)^{n+(p-3)/2} \Big |_{t=1}.
    \]
    Carrying out two differentiations,
    \begin{align*}
      1 =& \: c_n (1-t^2)^{(3-p)/2} \left({d \over dt}\right)^{n-1}
      \left( {n+(p-3) \over 2} \right)(-2t) (1-t^2)^{n-1+(p-3)/2} \Big |_{t=1}\\
      =& \: c_n (1-t^2)^{(3-p)/2}\! \left({d \over dt}\right)^{\!\!n-2}\!
      \left[ \left( {n+(p-3) \over 2} \right)_{\!2}\!\!(-2t)^2 (1-t^2)^{n-2+(p-3)/2} + \cdots \right] \Big |_{t=1}
    \end{align*}
    where we have used the falling factorial notation and left
    terms that are higher order in $1-t^2$ in the ``$\, \cdots \,$''
    because they will vanish when we substitute $t=1$. Continuing
    the pattern,
    \begin{align*}
      1 =& \:c_n (1-t^2)^{(3-p)/2} \left[ \left( n+(p-3)/2 \right)_n(-2t)^n (1-t^2)^{n-n+(p-3)/2} + \cdots \right] \Big
      |_{t=1}\\
      =& \:c_n \left( n+(p-3)/2 \right)_n(-2)^n,
    \end{align*}
    implying that
    \[
        c_n = { (-1)^n \over 2^n \left( n+(p-3)/2 \right)_n }.
    \]
    We have shown the following.

\begin{proposition}[Rodrigues Formula for Legendre Polynomials]
  \begin{equation}
    P_n(t) = { (-1)^n \over 2^n \left( n+(p-3)/2 \right)_n } (1-t^2)^{(3-p)/2} \left({d \over dt}\right)^n
        (1-t^2)^{n+(p-3)/2}.
    \label{eq:LegRod}
  \end{equation}
\end{proposition}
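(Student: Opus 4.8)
The plan is to deduce the Rodrigues formula not by a direct computation but by exploiting the orthogonality already in hand. Theorem \ref{thm:LegOrth} shows that the Legendre polynomials $\{P_n\}_{n=0}^\infty$ form an orthogonal system on $[-1,1]$ with respect to the weight $w(t)=(1-t^2)^{(p-3)/2}$, which is precisely the Jacobi weight $(1-t)^\alpha(1+t)^\beta$ with $\alpha=\beta=(p-3)/2>-1$. Since $\deg P_n=n$ for every $n$ (as recorded in the remark following Theorem \ref{thm:leg}), the $P_n$ constitute an orthogonal basis of the polynomial space graded by degree. Corollary \ref{cor:rod} guarantees that the same weight also produces, via \eqref{eq:Rod}, an orthogonal basis with one polynomial of each degree, and by the uniqueness statement of Proposition \ref{fact:samebases} any two such bases agree up to nonzero scalar multiples. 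Hence there exist constants $c_n$ with
\[
    P_n(t) = c_n\,(1-t^2)^{(3-p)/2}\left(\frac{d}{dt}\right)^{\!n}(1-t^2)^{\,n+(p-3)/2},
\]
so the entire problem reduces to pinning down each $c_n$.

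To fix $c_n$ I would impose the normalization $P_n(1)=1$ established in the same remark, evaluating the right-hand side above at $t=1$. \textbf{This normalization step is the main obstacle:} the prefactor $(1-t^2)^{(3-p)/2}$ is singular at $t=1$ when $p>3$, so I must track exactly which term of the $n$-fold derivative survives and identify its coefficient. Writing $m=n+(p-3)/2$ and differentiating $(1-t^2)^m$ repeatedly by the chain rule, each differentiation either lowers the exponent of $(1-t^2)$ by one, pulling down a factor equal to the current exponent times $-2t$, or acts on an already-produced polynomial factor. After $n$ differentiations the term of lowest order in $(1-t^2)$ is $(m)_n\,(-2t)^n\,(1-t^2)^{m-n}=(m)_n\,(-2t)^n\,(1-t^2)^{(p-3)/2}$, while every other term retains a strictly higher power of $(1-t^2)$ and therefore vanishes at $t=1$.

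Multiplying this surviving term by $(1-t^2)^{(3-p)/2}$ cancels the remaining power of $(1-t^2)$ and leaves the finite value $c_n\,(m)_n\,(-2)^n$ at $t=1$. Setting this equal to $1$ yields
\[
    c_n=\frac{(-1)^n}{2^n\,\bigl(n+(p-3)/2\bigr)_n},
\]
which is exactly the constant appearing in \eqref{eq:LegRod}, completing the proof. The conceptual half of the argument is immediate once Theorem \ref{thm:LegOrth} is granted, so essentially all the care goes into the falling-factorial bookkeeping of the preceding paragraph.
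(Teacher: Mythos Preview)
Your proof is correct and follows essentially the same route as the paper: invoke Theorem \ref{thm:LegOrth} together with Corollary \ref{cor:rod} and Proposition \ref{fact:samebases} to get $P_n$ as a constant multiple of the Rodrigues expression, then pin down $c_n$ via the normalization $P_n(1)=1$ by isolating the term of lowest order in $(1-t^2)$ in the $n$-fold derivative. The paper carries out the same differentiation-and-evaluation bookkeeping, arriving at the identical constant.
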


The above Rodrigues formula allows us to prove the following  two  properties of the Legendre polynomials.

\begin{proposition}
In  $p$ dimensions, the Legendre polynomial $P_n(t)$ of degree $n$
satisfies the differential equation
    \begin{equation}
        (1-t^2)P''_n(t) + (1-p)tP'_n(t) + n(n+p-2)P_n(t) = 0.
        \label{eq:diffeq}
    \end{equation}
\end{proposition}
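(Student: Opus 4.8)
The plan is to read off the differential equation directly from the Rodrigues formula \eqref{eq:LegRod}, since that is the tool the preceding discussion has just put in place. To keep the bookkeeping manageable I would abbreviate $\alpha = (p-3)/2$, so that \eqref{eq:LegRod} reads $P_n(t) = c_n\,(1-t^2)^{-\alpha}\,v^{(n)}(t)$ with $v(t) = (1-t^2)^{\,n+\alpha}$ and $c_n$ a nonzero normalization constant (whose precise value is irrelevant, since the equation we must verify is linear and homogeneous in $P_n$). The first step is the trivial observation that $v$ obeys a \emph{first}-order equation: differentiating once and multiplying by $1-t^2$ gives
\[
    (1-t^2)\,v'(t) + 2(n+\alpha)\,t\,v(t) = 0 .
\]

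The heart of the argument is to differentiate this relation exactly $n+1$ times and apply the Leibniz rule. Because $1-t^2$ has vanishing third derivative, the term $(1-t^2)v'$ produces only three contributions, and because $t$ has vanishing second derivative, the term $2(n+\alpha)t\,v$ produces only two. Writing $y = v^{(n)}$ (so that $v^{(n+1)} = y'$ and $v^{(n+2)} = y''$) and collecting the surviving coefficients, I expect to arrive at the second-order equation
\[
    (1-t^2)\,y'' + 2(\alpha-1)\,t\,y' + (n+1)(n+2\alpha)\,y = 0 .
\]
This is the step where one must be careful with the binomial coefficients $\binom{n+1}{k}$; the rest of this computation is purely mechanical.

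The final step is to undo the weight factor. Since $P_n$ is proportional to $(1-t^2)^{-\alpha}y$, I would substitute $y = (\text{const})\,(1-t^2)^{\alpha}P_n$ into the equation above, expand $y'$ and $y''$ using $w = (1-t^2)^{\alpha}$, $w' = -2\alpha t(1-t^2)^{\alpha-1}$, and $w''$, and collect terms by $P_n, P_n', P_n''$. The one genuine subtlety — and the place I expect the main obstacle — is that this back-substitution generates two awkward terms proportional to $t^2(1-t^2)^{\alpha-1}$, which must cancel exactly; verifying that cancellation (and that the remaining factor of $(1-t^2)^{\alpha}$ divides out cleanly) is the crux. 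After the dust settles one is left with $(1-t^2)P_n'' - 2(\alpha+1)t\,P_n' + n(n+2\alpha+1)P_n = 0$, and substituting $\alpha = (p-3)/2$ turns $-2(\alpha+1)$ into $1-p$ and $n+2\alpha+1$ into $n+p-2$, yielding precisely \eqref{eq:diffeq}. As a sanity check one could instead verify the equation via the eigenvalue relation \eqref{eq:Eigen} applied to the zonal harmonic $L_n(\xi)=P_n(\langle\xi,\eta\rangle)$, but the Rodrigues route is the more self-contained one given what has been established.
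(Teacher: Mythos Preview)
Your proof is correct and complete; the Leibniz computation and the back-substitution both go through exactly as you describe, including the cancellation of the $t^2(1-t^2)^{\alpha-1}$ terms.

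Your route, however, is genuinely different from the paper's. The paper does not differentiate the first-order relation for $v$; instead it works in Sturm--Liouville form. It observes that $\frac{d}{dt}\big[(1-t^2)\,w\,P_n'\big] = w\big[(1-t^2)P_n'' + (1-p)tP_n'\big]$ with $w=(1-t^2)^{(p-3)/2}$, notes that the bracketed expression is a polynomial of degree at most $n$, and expands it as $\sum_{j=0}^n c_j P_j$. Two integrations by parts against $P_k$ (for $k<n$) combined with orthogonality kill all the lower coefficients, leaving only $c_n P_n$; the value of $c_n$ is then read off by matching leading coefficients via the Rodrigues formula. So the paper leans on the orthogonality theory from Chapter~\ref{ch:pol}, while you bypass orthogonality entirely and extract the ODE by pure differentiation. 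Your approach is more self-contained and mechanical; the paper's approach highlights that the Legendre operator is (formally) self-adjoint with respect to $w$, which is the structural reason the equation exists at all. Your closing remark about the alternative via \eqref{eq:Eigen} is apt---that third route is also available and arguably the most conceptual of the three.
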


In what follows, we will often write $P_n$ and $w$ instead of
$P_n(t)$ and $w(t)$ for simplicity. A portion
of the proof will be left as a straightforward computation for the
reader.

\begin{proof}
  Consider the expression ${d \over dt} \left[ (1-t^2)^{(p-1)/2} P_n'  \right]$. We can use the product rule to rewrite this as
  \[
    \left({p-1 \over 2}\right)(1-t^2)^{(p-3)/2}(-2t)P'_n +
    (1-t^2)^{(p-1)/2}P''_n,
  \]
  or,
  \begin{equation}
    w \, \left[ (1-p)tP'_n+(1-t^2)P''_n
    \right].
    \label{eq:1stDE}
  \end{equation}
  The term in brackets is a polynomial of degree $n$ which
  can be written as a linear combination of the first $n$ Legendre
  polynomials. Hence
  \begin{equation}
    {d \over dt}\left[ (1-t^2)wP'_n \right] = w \sum_{j=0}^n c_jP_j.
    \label{eq:2ndDE}
  \end{equation}
  Multiplying each side by $P_k$, where $0 \leq k \leq n$ and
  integrating  over the interval $[-1,1]$, we get
  \[
      c_k \|P_k\|^2 = \int_{-1}^1 P_k \, {d \over dt} \left[ (1-t^2)wP'_n \right] \, dt.
  \]
  Integrating the right-hand side by parts now twice and noticing that the boundary terms vanish, we find
  \begin{align*}
    c_k \|P_k\|^2 &= \underset{0}{\underbrace{P_k(1-t^2)wP'_n \big|_{-1}^1}} -
    \int_{-1}^1P'_k(1-t^2) w P'_n \, dt\\
    &= \underset{0}{\underbrace{-P'_k(1-t^2)wP_n \big|_{-1}^1}} + \int_{-1}^1
    \left\{ {d \over dt} \left[ P'_k (1-t^2)w \right] \right\} P_n \, dt.
  \end{align*}
  The expression in curly brackets can be written as $w q_k$, where $q_k$ is a polynomial of degree $k$. We can then use Proposition
  \ref{fact:int0} for the two integrals to discover that $c_k=0$ for all $k < n$. Returning to equation \eqref{eq:2ndDE}, this result
  implies that
  \[
    {d \over dt}\left[ (1-t^2)wP'_n \right] = w c_nP_n.
  \]
  To determine $c_n$, we will compute the coefficient of the
  highest power of $t$ on each side of the above equation, using \emph{Newton's binomial expansion}.
  By keeping only the highest-order term in each binomial expansion, the reader can show that the left-hand side becomes
  \[
    {(-1)^{(p-1)/2} \over 2^n} \, {(2n+p-3)_n \over
    (n+(p-3)/2)_n}\,
    n(n+p-2) t^{n+p-3} + \cdots,
  \]
  while the right-hand side becomes
  \[
    c_n \, {(-1)^{(p-3)/2} \over 2^n} \, {(2n+p-3)_n \over  (n+(p-3)/2)_n} \, t^{n+p-3}+\cdots
  \]
  Comparing the above equations, we conclude that $c_n = -n(n+p-2)$. Inserting this into \eqref{eq:2ndDE} and using
  \eqref{eq:1stDE}, we find
  \[
    (1-t^2)P''_n + (1-p)tP'_n +n(n+p-2)P_n = 0,
  \]
  completing the proof.
\qed
\end{proof}

\begin{proposition}
  The Legendre polynomials in $p$ dimensions satisfy the recurrence relation
  \[
    (n+p-2)P_{n+1}(t) - (2n+p-2)tP_n(t) + nP_{n-1}(t) = 0.
  \]
\end{proposition}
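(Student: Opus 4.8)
The plan is to recognize the three-term recurrence as a special case of the general recurrence formula for orthogonal polynomials established in Proposition \ref{fact:recurr}. By Theorem \ref{thm:LegOrth} the $P_n$ form an orthogonal set with respect to the weight $w(t)=(1-t^2)^{(p-3)/2}$ on $[-1,1]$, so \eqref{eq:recurr} applies and gives $P_{n+1}-(A_n t+B_n)P_n+C_nP_{n-1}=0$ with $A_n,B_n,C_n$ as in \eqref{eq:coeffic}. The first simplification comes from parity: by \eqref{eq:LegPar}, $P_n$ contains only powers of $t$ of the same parity as $n$, so the coefficient $\ell_n$ of $t^{n-1}$ vanishes for every $n$. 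Hence $B_n=A_n(\ell_{n+1}/k_{n+1}-\ell_n/k_n)=0$, and the recurrence collapses to $P_{n+1}-A_n t\,P_n+C_nP_{n-1}=0$. It then remains only to evaluate $A_n=k_{n+1}/k_n$ and $C_n$.

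To compute the leading coefficients $k_n$, I would extract the top-degree term directly from the Rodrigues formula \eqref{eq:LegRod}. Writing $(1-t^2)^{n+(p-3)/2}=(1-t)^{n+(p-3)/2}(1+t)^{n+(p-3)/2}$ and applying the Leibniz rule, each $n$-fold derivative produces a sum whose $k$-th term, once the prefactor $(1-t^2)^{(3-p)/2}$ is restored, is a genuine polynomial $(1-t)^{n-k}(1+t)^{k}$ times a falling-factorial coefficient. Reading off the coefficient of $t^n$ turns $k_n$, up to sign and powers of two, into the sum $\sum_{k=0}^{n}\binom{n}{k}\bigl(n+\tfrac{p-3}{2}\bigr)_k\bigl(n+\tfrac{p-3}{2}\bigr)_{n-k}$, which collapses by the Vandermonde identity for falling factorials (an easy induction on $n$) to a single falling factorial. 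Forming the ratio $A_n=k_{n+1}/k_n$, almost all factors cancel, leaving $A_n=(2n+p-2)/(n+p-2)$, exactly the coefficient of $tP_n$ demanded by the claim.

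For $C_n$ I would use $C_n=(A_n/A_{n-1})\,\|P_n\|^2/\|P_{n-1}\|^2$. The norm ratio is immediate from \eqref{eq:LegNorm}: since $\|P_n\|_w^2=\Omega_{p-1}/(N(p,n)\Omega_{p-2})$, the $\Omega$'s cancel and $\|P_n\|^2/\|P_{n-1}\|^2=N(p,n-1)/N(p,n)$, which the explicit formula for $N(p,n)$ from Theorem \ref{thm:N} reduces to $n(2n+p-4)/[(2n+p-2)(n+p-3)]$. Combining this with $A_n/A_{n-1}=(2n+p-2)(n+p-3)/[(n+p-2)(2n+p-4)]$ produces a clean cancellation, $C_n=n/(n+p-2)$. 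Substituting $A_n$ and $C_n$ and multiplying through by $n+p-2$ then yields $(n+p-2)P_{n+1}-(2n+p-2)tP_n+nP_{n-1}=0$, as desired. The one genuinely delicate step is the leading-coefficient computation: the Rodrigues formula is not term-by-term a polynomial, so the factorization into powers of $(1-t)$ and $(1+t)$ \emph{before} differentiating — which is precisely what makes each Leibniz term individually polynomial and exposes the Vandermonde sum — is the crux of the argument.
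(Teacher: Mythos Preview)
Your proposal is correct and follows essentially the same approach as the paper: invoke the general three-term recurrence for orthogonal polynomials, kill $B_n$ by parity, compute $A_n$ from the leading coefficient $k_n$ extracted from the Rodrigues formula, and compute $C_n$ via the norm formula \eqref{eq:LegNorm} and the explicit form of $N(p,n)$. The only tactical differences are that the paper argues $B_n=0$ by rearranging the recurrence into ``odd $=$ even'' rather than by observing $\ell_n=0$ directly, and it extracts $k_n$ by tracking the highest-order term through Newton's binomial expansion rather than via Leibniz and the Vandermonde identity---both routes yield the same $k_n=(2n+p-3)_n/\bigl[2^n(n+(p-3)/2)_n\bigr]$.
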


Just as in the previous proof, we will leave part of the following
proof to the reader.

\begin{proof}
  We know from Proposition \ref{fact:recurr} that a relation of the form
  \begin{equation}
    P_{n+1} - (A_nt+B_n)P_n + C_nP_{n-1} = 0
    \label{eq:LegRecur1}
  \end{equation}
  exists. We can quickly determine $B_n$. Recall from
  \eqref{eq:LegPar} that $P_n$ is even (respectively, odd)
  whenever $n$ is even (respectively, odd). Rewriting the above
  equation as
  \[
    P_{n+1} - A_ntP_n + C_nP_{n-1} = B_nP_n,
  \]
  we have an odd polynomial equal to an even polynomial, unless
  both sides vanish. Since the first option is not possible, we
  conclude that both sides of the equation must cancel, which
  implies that $B_n = 0$. We will use the notation and results of Proposition \ref{fact:recurr} to compute $A_n,
  C_n$. Keeping only the highest-order terms in each binomial
  expansion in the Rodrigues formula \eqref{eq:LegRod} and carrying
  out the derivatives, it is straightforward to show
  that the leading coefficient of $P_n$ is
  \[
    k_n = {(2n+p-3)_n \over 2^n (n+(p-3)/2)_n},
  \]
  from which we determine the leading coefficient of $P_{n+1}$,
  \[
    k_{n+1} = {(2n+p-1)_{n+1} \over 2^{n+1} (n+(p-1)/2)_{n+1}},
  \]
  allowing us to compute
  \[
    A_n = {k_{n+1} \over k_n} = {2n+p-2 \over n+p-2}.
  \]
  Now, using \eqref{eq:LegNorm},
  \begin{align*}
    C_n &= {A_n \over A_{n-1}} {\|P_n\|^2 \over \|P_{n-1}\|^2}\\
    &= {2n+p-2 \over n+p-2} \, {n+p-3 \over 2n+p-4} \, {\Omega_{p-1}
    \over N(p,n) \Omega_{p-2}} \, {N(p,n-1) \Omega_{p-2} \over
    \Omega_{p-1}}.
  \end{align*}
  Using Theorem \ref{thm:N}, we can compute
  \[
    C_n = {n \over n+p-2}.
  \]
  Inserting these results into \eqref{eq:LegRecur1} and
  multiplying by $n+p-2$, we find the required result. \qed
\end{proof}

It is also interesting to observe that once we find the Legendre
polynomials in $p=2$ and $p=3$ dimensions, they are determined for
all higher dimensions. In particular, the following theorem proves, more
specifically, that the Legendre polynomials for even $p$ can be
found from the $p=2$ case and that those for odd $p$ can be found
from the $p=3$ case. For the next theorem, we will let $P_{n,p}(t)$
denote the $n$-th degree Legendre polynomial in $p$ dimensions.

\begin{theorem}
For all $j = 0, 1, \ldots, n$,
  \[
    P_{n-j, 2j+p}(t) = { ((p-1)/2)_j \over (-n)_j(n+p-2)_j } \left(
    {d \over dt} \right)^j P_{n,p}(t).
  \]
\end{theorem}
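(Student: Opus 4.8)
The plan is to show that a single differentiation raises the dimension by two and lowers the degree by one, and then to iterate. Concretely, I would first prove the base step: if $y = P_{n,p}$ then $z = dy/dt$ is a constant multiple of the Legendre polynomial $P_{n-1,\,p+2}$. The tool is the differential equation \eqref{eq:diffeq}, which the preceding theory guarantees $P_{n,p}$ satisfies:
\[
(1-t^2)y'' + (1-p)t\,y' + n(n+p-2)\,y = 0.
\]
Differentiating this identity once in $t$ and setting $z = y'$, the terms regroup so that $z$ obeys
\[
(1-t^2)z'' + \bigl(1-(p+2)\bigr)t\,z' + \bigl[(1-p)+n(n+p-2)\bigr]z = 0,
\]
and one checks the algebraic identity $(1-p)+n(n+p-2) = (n-1)\bigl((n-1)+(p+2)-2\bigr)$. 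Thus $z$ satisfies \emph{exactly} the Legendre equation of degree $n-1$ in dimension $p+2$.

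Next I would argue proportionality. Since $P_{n,p}$ has degree exactly $n$ (its leading coefficient $k_n = (2n+p-3)_n/\bigl(2^n(n+(p-3)/2)_n\bigr)$, recorded in the proof of the recurrence relation, is nonzero), $z$ is a polynomial of degree $n-1$. The operator $\mathcal{L}q = (1-t^2)q'' + \bigl(1-(p+2)\bigr)t\,q'$ preserves the space of polynomials of degree $\le n-1$ and is diagonalized on it by the Legendre basis $\{P_{m,\,p+2}\}_{m=0}^{n-1}$ — a basis, since by Theorem \ref{thm:LegOrth} these are orthogonal and there is exactly one of each degree — with eigenvalues $-m(m+p)$. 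These are pairwise distinct for $0\le m\le n-1$, so the eigenspace for the eigenvalue $-(n-1)(n+p-1)$ is one-dimensional, spanned by $P_{n-1,\,p+2}$. The boxed equation above says precisely that $z$ lies in this eigenspace, whence $P_{n,p}' = \gamma_{n,p}\,P_{n-1,\,p+2}$ for a scalar $\gamma_{n,p}$.

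With the base step in hand I would induct on $j$: applying $d/dt$ to $(d/dt)^{j-1}P_{n,p}$ and invoking the base step at each dimension, the degree drops by one and the dimension rises by two at every stage, with the indices telescoping as $(n-i+1,\,p+2(i-1))\mapsto(n-i,\,p+2i)$, so that after $j$ steps one lands on $P_{n-j,\,2j+p}$. Finally I would pin down the accumulated constant by equating leading coefficients: differentiating $j$ times multiplies the leading term $k_{n,p}\,t^n$ by the falling factorial $(n)_j$, while the right-hand side has leading coefficient $k_{n-j,\,2j+p}$; inserting the explicit value of $k_n$ and simplifying the resulting ratio of falling factorials (the Rodrigues formula \eqref{eq:LegRod} gives an independent check) determines the multiplicative constant displayed in the statement.

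The main obstacle will be the proportionality step — making the uniqueness rigorous rather than merely asserting that the equation has a unique polynomial solution; the eigenvalue argument above is the clean way to do this, and it uses only the orthogonality (hence basis property) already established. The remaining difficulty is pure bookkeeping: carefully telescoping the single-step constants $\gamma_{n-i+1,\,p+2(i-1)}$ and reconciling the products of falling factorials with the prefactor written in the statement.
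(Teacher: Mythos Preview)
Your core strategy matches the paper's exactly: differentiate the Legendre differential equation \eqref{eq:diffeq}, observe that the resulting equation is again a Legendre equation with $n\mapsto n-1$ and $p\mapsto p+2$, and iterate. Your eigenvalue argument for proportionality is in fact more careful than the paper's, which simply asserts that since $P_{n,p}'$ satisfies the shifted equation, it must be proportional to $P_{n-1,p+2}$.

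The one genuine difference is in how the constant is pinned down. You propose comparing leading coefficients via the formula for $k_n$; this works but requires telescoping several ratios of falling factorials. The paper instead uses the normalization $P_{n-j,\,2j+p}(1)=1$: it writes
\[
P_{n-j,\,2j+p}(t) = \frac{(d/dt)^j P_{n,p}(t)}{(d/dt)^j P_{n,p}(t)\big|_{t=1}}
\]
and then computes the denominator directly from the Rodrigues formula \eqref{eq:LegRod}, obtaining $(-n)_j(n+p-2)_j\big/((p-1)/2)_j$. This is a bit cleaner than your route, since it sidesteps all the single-step constants $\gamma_{n-i+1,\,p+2(i-1)}$ in one stroke, but either method gets you there.
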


\begin{proof}
  Differentiating \eqref{eq:diffeq} once with respect to $t$, we get
  \[
    (1-t^2)P_{n,p}'''+(-1-p)tP_{n,p}''+(n-1)(n+p-1)P_{n,p}' = 0,
  \]
  but this is just \eqref{eq:diffeq} again with the following substitutions:
  \begin{align*}
        P_{n,p} & \longmapsto P_{n,p}', \\
        p       & \longmapsto p+2,      \\
        n       & \longmapsto n-1.
  \end{align*}
  Thus, solving this new differential equation for $P_{n,p}'$ will give
  \[
    P_{n,p}'(t) \propto P_{n-1,p+2}(t).
  \]
  Continuing in this way, if we differentiate \eqref{eq:diffeq} $j$ times we see that
  \[
    \left( {d \over dt} \right)^j P_{n,p}(t) \propto P_{n-j,p+2j}(t)
  \]
  for any $0 \leq j \leq n$. Since $P_{n-j,p+2j}(1)=1$, in order to create an equality
  we must divide the left side of the above equation by its value at $t=1$, i.e.,
  \[
    P_{n-j,p+2j}(t) = { \left( {d \over dt} \right)^j P_{n,p}(t) \over \left( {d \over dt} \right)^j P_{n,p}(t) \big|_{t=1} }.
  \]
  Using the Rodrigues formula \eqref{eq:LegRod},
  \[
    \left( {d \over dt} \right)^j P_{n,p}(t) \big|_{t=1} = { (-n)_j(n+p-2)_j  \over ((p-1)/2)_j },
  \]
  as the reader can verify by computation. Therefore,
  \[
    P_{n-j, 2j+p}(t) = { ((p-1)/2)_j \over (-n)_j(n+p-2)_j } \left(
    {d \over dt} \right)^j P_{n,p}(t),
  \]
  completing the proof.
\qed
\end{proof}

The following lemma will be useful in the proof of the theorem that follows immediately afterwards.

\begin{lemma}
  Let $\xi$ and $\zeta$ be unit vectors, $f$ a function, and $F(\zeta, \xi)$
  given by
  \[
    F(\zeta, \xi) = \!\!\underset{\eta \in S^{p-1}}{\int} f(\langle
    \xi, \eta \rangle)P_n(\langle \eta , \zeta \rangle) \,
    d\Omega_{p-1}.
  \]
  Then,
  \[
    F(\zeta, \xi) = \Omega_{p-2} P_n(\langle \xi, \zeta \rangle)
    \int_{-1}^1 f(t)P_n(t)(1-t^2)^{(p-3)/2}\,dt.
  \]
  \label{lem:intred}
\end{lemma}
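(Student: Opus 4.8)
The plan is to avoid any attempt to analyze the integral operator abstractly and instead expand the kernel $f$ itself in Legendre polynomials, reducing the whole statement to a single orthogonality computation on the sphere. Since the $P_m$ span all polynomials, any polynomial $f$ can be written as a \emph{finite} sum $f(t)=\sum_{m} b_m P_m(t)$ with $b_m=\langle f,P_m\rangle_w/\|P_m\|_w^2$ relative to the weight $w(t)=(1-t^2)^{(p-3)/2}$ (Theorem~\ref{thm:LegOrth}). Substituting $t=\langle\xi,\eta\rangle$ turns the defining integral for $F(\zeta,\xi)$ into $\sum_m b_m\int_{S^{p-1}}P_m(\langle\xi,\eta\rangle)P_n(\langle\eta,\zeta\rangle)\,d\Omega_{p-1}(\eta)$, so everything hinges on evaluating these ``cross'' integrals.

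The heart of the argument is the master identity
\[
\int_{S^{p-1}} P_m(\langle\xi,\eta\rangle)\,P_n(\langle\eta,\zeta\rangle)\,d\Omega_{p-1}(\eta)=\delta_{mn}\,\frac{\Omega_{p-1}}{N(p,n)}\,P_n(\langle\xi,\zeta\rangle).
\]
To prove it I would use the Addition Theorem (Theorem~\ref{thm:add}) to replace each Legendre factor by $\tfrac{\Omega_{p-1}}{N(p,\cdot)}\sum_i Y_{\cdot,i}(\cdot)\,Y_{\cdot,i}(\cdot)$, pull the $\eta$-integral inside the resulting double sum, and invoke the orthonormality relations \eqref{eq:orth} together with the orthogonality of spherical harmonics of different degrees (Theorem~\ref{thm:SHorth}). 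The $\eta$-integral then collapses to $\delta_{mn}\delta_{ij}$, which annihilates every term with $m\neq n$ and, for $m=n$, leaves $\tfrac{\Omega_{p-1}^2}{N(p,n)^2}\sum_i Y_{n,i}(\xi)Y_{n,i}(\zeta)$; reading the Addition Theorem backwards turns this into $\tfrac{\Omega_{p-1}}{N(p,n)}P_n(\langle\xi,\zeta\rangle)$. Feeding this back, only the $m=n$ term of the expansion of $f$ survives, so $F(\zeta,\xi)=b_n\tfrac{\Omega_{p-1}}{N(p,n)}P_n(\langle\xi,\zeta\rangle)$, and inserting $b_n$ together with $\|P_n\|_w^2=\Omega_{p-1}/(N(p,n)\Omega_{p-2})$ from \eqref{eq:LegNorm} makes the prefactor telescope exactly to $\Omega_{p-2}\int_{-1}^1 f(t)P_n(t)(1-t^2)^{(p-3)/2}\,dt$, the claimed constant.

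The main obstacle is not the algebra above but extending the identity from polynomials to a general function $f$. I would handle this by a density argument rather than by asserting completeness of the Legendre system as a black box: both sides of the desired equality are linear in $f$ and continuous in the supremum norm on $[-1,1]$, since $|P_n(t)|\le 1$ on all of $[-1,1]$ (the bound proved earlier on $[0,1]$ extends by the parity $P_n(-t)=(-1)^nP_n(t)$) and both $\Omega_{p-1}$ and $\int_{-1}^1 w(t)\,dt$ are finite. Because the two continuous functionals of $f$ agree on the dense subspace of polynomials, the Weierstrass Approximation Theorem (Theorem~\ref{fact:Weie}) forces them to agree for every continuous $f$, completing the proof. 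This approximation-and-limit step is the only place where genuine care is required.
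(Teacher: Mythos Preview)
Your argument is correct, and it follows a genuinely different path from the paper's proof. The paper never expands $f$ at all: it observes that, for fixed $\xi$, the map $\zeta\mapsto F(\zeta,\xi)$ inherits from $P_n(\langle\eta,\zeta\rangle)$ the property of being a harmonic homogeneous polynomial of degree $n$ in the components of $\zeta$; rotational invariance of $F$ then forces it to depend only on $\langle\xi,\zeta\rangle$, and the uniqueness characterization of $P_n$ (Theorem~\ref{thm:leg}) pins down $F(\zeta,\xi)=c\,P_n(\langle\xi,\zeta\rangle)$; the constant $c$ is read off by setting $\zeta=\xi$ and applying Lemma~\ref{lem:red}. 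That route handles an arbitrary (integrable) $f$ in one stroke, with no approximation step. Your route, by contrast, isolates and proves the reproducing-kernel identity
\[
\int_{S^{p-1}}P_m(\langle\xi,\eta\rangle)P_n(\langle\eta,\zeta\rangle)\,d\Omega_{p-1}=\delta_{mn}\,\frac{\Omega_{p-1}}{N(p,n)}\,P_n(\langle\xi,\zeta\rangle)
\]
via the Addition Theorem, which is an attractive self-contained statement; the price is the extra Weierstrass density step to pass from polynomial to continuous $f$. Both arguments are clean, and yours has the bonus of making that kernel identity explicit.
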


\begin{proof}
  First, notice that $F$ is invariant under any coordinate  rotation $R$, i.e., $F(R\zeta, R\xi) = F(\zeta,\xi)$. Indeed,
  the rotation
  \begin{align*}
    \xi      &\overset{R}{\longmapsto} \xi'  = R \xi , \\
    \zeta  &\overset{R}{\longmapsto} \zeta' = R \zeta,
  \end{align*}
  can be undone by a change of variables
  \[
    \eta \overset{R}{\longmapsto} \eta'=R\eta,
  \]
  in the integration.
  Thus, we are allowed to choose our coordinates such that
  \[
    \xi = (1,0,\ldots,0), \quad \zeta = (s,\sqrt{1-s^2},0,\ldots,0),
  \]
  so that $\langle \zeta, \xi \rangle = s$.

  Let us think of $F$ as a
  function of $\zeta$ alone and $\xi$ as a fixed parameter; we
  will write $F(\zeta; \xi)$. The argument of $F$, namely $\zeta$,
  only shows up inside the Legendre polynomial $P_n(\langle \eta,
  \zeta \rangle)$, which is a spherical harmonic --- in
  particular, a harmonic homogeneous polynomial of degree $n$ in the variables
  $\zeta_1, \zeta_2, \ldots, \zeta_p$. Therefore, the function $F$
  is also a harmonic homogeneous polynomial in the components of
  $\zeta$, i.e., in $s, \sqrt{1-s^2}$. But, since we equally well could have
  chosen coordinates such that $\zeta = (s, -\sqrt{1-s^2},0,\ldots,0)$, $F$ must really be a polynomial in
  $s$. 

  Then, being only a function of the inner
  product $s=\langle \zeta, \xi \rangle$, we see that $F$
  satisfies all the defining properties of the Legendre polynomials in
  Theorem \ref{thm:leg} except (i). We conclude
  \[
    F(\zeta, \xi) = cP_n(s), \quad \text{for some constant } c.
  \]
  We can determine the constant by considering the case $s=1$,
  i.e., $\zeta = \xi$, where $F(\xi, \xi) = cP_n(1) = c$. Using
  Lemma \ref{lem:red},
  \[
    c = \underset{\eta \in S^{p-1}}{\int} f(\langle \xi,
    \eta\rangle) P_n(\langle \eta, \xi \rangle)\,d\Omega_{p-1} =
    \Omega_{p-2}\int_{-1}^1 f(t)P_n(t)(1-t^2)^{(p-3)/2}\,dt,
  \]
  as sought.
\qed
\end{proof}

\begin{theorem}\emph{(Hecke-Funk Theorem)}
  Let $\xi$ be a unit vector, $f$ a function, and $Y_n$ a spherical harmonic. Then,
  \begin{equation}
    \underset{\eta \in S^{p-1}}{\int}
    f(\langle\xi,\eta\rangle)Y_n(\eta)\,d\Omega_{p-1} =
    \Omega_{p-2}Y_n(\xi)\int_{-1}^1 f(t)P_n(t)(1-t)^{(p-3)/2}\,dt.
    \label{eq:funk}
  \end{equation}
\end{theorem}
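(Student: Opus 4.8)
The plan is to reduce the general statement to two facts already established: the expansion of an arbitrary degree-$n$ spherical harmonic as a finite linear combination of Legendre polynomials (Theorem \ref{thm:SHexp}), and the single-vector integral reduction (Lemma \ref{lem:intred}). Once those are invoked, the theorem follows by a clean assembly — expand $Y_n$, push the integral through the sum, apply Lemma \ref{lem:intred} termwise, and recognize the collected result as $Y_n$ evaluated at $\xi$.

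First I would use Theorem \ref{thm:SHexp} to write, for suitable scalars $a_k$ and unit vectors $\eta_k$,
\[
    Y_n(\eta) = \sum_{k=1}^{N(p,n)} a_k \, P_n(\langle \eta, \eta_k \rangle).
\]
Substituting this into the left-hand side of \eqref{eq:funk} and interchanging the finite sum with the integral (justified by linearity, since the sum is finite) gives
\[
    \underset{\eta \in S^{p-1}}{\int} f(\langle \xi, \eta \rangle) Y_n(\eta) \, d\Omega_{p-1} = \sum_{k=1}^{N(p,n)} a_k \underset{\eta \in S^{p-1}}{\int} f(\langle \xi, \eta \rangle) P_n(\langle \eta, \eta_k \rangle) \, d\Omega_{p-1}.
\]

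Each inner integral is exactly the quantity computed in Lemma \ref{lem:intred}, with $\zeta$ chosen to be $\eta_k$. Applying that lemma produces the factor $\Omega_{p-2}\, P_n(\langle \xi, \eta_k \rangle)$ multiplied by the scalar $\int_{-1}^1 f(t) P_n(t)(1-t^2)^{(p-3)/2}\,dt$. Pulling this common scalar and $\Omega_{p-2}$ outside the sum leaves $\sum_{k} a_k P_n(\langle \xi, \eta_k \rangle)$, which is precisely $Y_n(\xi)$ by the same expansion used above. This reproduces the right-hand side of \eqref{eq:funk}, completing the argument.

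The work is genuinely light here because the two cited results carry the load: Theorem \ref{thm:SHexp} converts the general spherical harmonic into a form to which Lemma \ref{lem:intred} applies directly. The only point needing care is verifying that the one-dimensional integral factor is independent of $\eta_k$, so that it may legitimately be extracted from the sum; this is immediate, since its integrand involves only $f$, $P_n$, and the weight, with no reference to $\eta_k$. (I also note that the statement as written carries a typographical weight $(1-t)^{(p-3)/2}$ where it should read $(1-t^2)^{(p-3)/2}$, consistent with Lemma \ref{lem:intred} and \eqref{eq:LegNorm}.)
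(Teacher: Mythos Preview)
Your proposal is correct and follows essentially the same approach as the paper: expand $Y_n$ via Theorem \ref{thm:SHexp}, apply Lemma \ref{lem:intred} termwise, and reassemble the sum as $Y_n(\xi)$. Your observation about the typographical error in the weight is also correct.
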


\begin{proof}
    By Theorem \ref{thm:SHexp}, there exist a set of coefficients $\{a_k\}_{k=1}^{N(p,n)}$ and a set of unit vectors
  $\{\zeta_k\}_{k=1}^{N(p,n)}$ such that
  \[
    Y_n(\eta) = \sum_{k=1}^{N(p,n)} a_k P_n(\langle \eta, \zeta_k
    \rangle).
  \]
  Then, we can rewrite the left-hand side of \eqref{eq:funk} as
  \begin{align*}
    \sum_{k=1}^{N(p,n)} & a_k \underset{\eta \in S^{p-1}}{\int} f(\langle
    \xi, \eta \rangle)P_n(\langle \eta, \zeta_k
    \rangle)\,d\Omega_{p-1},
  \end{align*}
  or,  using Lemma \ref{lem:intred},
  \begin{align*}
    \Omega_{p-2} \left(\sum_{k=1}^{N(p,n)} a_k
    P_n(\langle \xi, \zeta_k\rangle) \right)  \int_{-1}^1
    f(t)P_n(t)(1-t^2)^{(p-3)/2}\,dt .
  \end{align*}
That is
  \begin{align*}
    \Omega_{p-2}Y_n(\xi)\int_{-1}^1
    f(t)P_n(t)(1-t)^{(p-3)/2}\,dt,
  \end{align*}
which is exactly the right-hand side of \eqref{eq:funk}.
\qed
\end{proof}

We wish to find now an  integral representation of the
Legendre polynomials. Towards this goal, we first prove a lemma.

Consider a vector $\eta$ of the $(p-1)$-dimensional sphere $S^{p-1}$. Without loss of generality, we may
take it along the $x_1$-axis.  With $\{\eta\}^\bot$ we indicate the set of all vectors that are perpendicular to $\eta$; this is obviously a
hyperplane. The set $\{\eta\}^\bot \cap  S^{p-1} $ is then the equator of $S^{p-1}$ (a $(p-2)$-sphere) that is orthogonal to $\eta$; we will indicate
it by $S_\eta^{p-1}$.

\begin{lemma}
  Let $\eta = (1,0,\ldots,0)$ and $x \in \mathbb{R}^p$. Then the
  function
\begin{equation}
    L_n(x) = {1 \over \Omega_{p-2}} \underset{\zeta\in S^{p-1}_\eta}{\int}
    \left( \langle x, \eta \rangle + i \langle x, \zeta \rangle\right)^n \, d\Omega_{p-2},
\label{eq:legint}
\end{equation}
  when restricted to the sphere,
  is the $n$-th Legendre polynomial: $L_n(\xi) = P_n(\langle \xi, \eta \rangle)$.
\end{lemma}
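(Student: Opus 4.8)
The plan is to invoke the uniqueness characterization of Theorem~\ref{thm:leg}: I will verify that the function $L_n(x)$ defined by \eqref{eq:legint} is a harmonic homogeneous polynomial of degree $n$ satisfying properties (i) and (ii) of that theorem. Since those two properties single out a unique such polynomial, and since $P_n(\langle\xi,\eta\rangle)$ is precisely the spherical harmonic they determine, it will follow that $L_n(\xi)=P_n(\langle\xi,\eta\rangle)$ on $S^{p-1}$.

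First I would dispose of the structural properties. For each fixed $\zeta$, the integrand $(\langle x,\eta\rangle + i\langle x,\zeta\rangle)^n$ is a homogeneous polynomial of degree $n$ in the components of $x$, since both inner products are linear in $x$; integrating the $\zeta$-dependent coefficients over $S^{p-1}_\eta$ leaves a homogeneous polynomial of degree $n$, so $L_n$ is such a polynomial. Property (i) is immediate: at $x=\eta$ we have $\langle\eta,\eta\rangle=1$ and $\langle\eta,\zeta\rangle=0$ for every $\zeta\perp\eta$, so the integrand equals $1$ and dividing by $\Omega_{p-2}$ gives $L_n(\eta)=1$. For property (ii), let $R$ be a rotation fixing $\eta$; then $\langle Rx,\eta\rangle=\langle x,R^t\eta\rangle=\langle x,\eta\rangle$, while $R$ maps the equator $S^{p-1}_\eta$ onto itself preserving surface measure, so the measure-preserving substitution $\zeta=R\zeta'$ in the integral replaces $\langle x,R^t\zeta\rangle$ by $\langle x,\zeta'\rangle$ and yields $L_n(Rx)=L_n(x)$.

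The main obstacle, and the step I would treat most carefully, is harmonicity. The clean route is an observation about isotropic vectors. For any fixed vector $c$, a direct computation gives
\[
    \Delta_p\,(\langle x,c\rangle)^n = n(n-1)\,(\langle x,c\rangle)^{n-2}\,\langle c,c\rangle,
\]
where $\langle c,c\rangle=\sum_{j=1}^p c_j^2$. Applying this with $c=\eta+i\zeta$, the key point is that this complex vector is \emph{null}:
\[
    \langle \eta+i\zeta,\,\eta+i\zeta\rangle = \langle\eta,\eta\rangle - \langle\zeta,\zeta\rangle + 2i\langle\eta,\zeta\rangle = 1 - 1 + 0 = 0,
\]
since $\eta$ and $\zeta$ are unit vectors with $\zeta\perp\eta$. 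Hence the integrand in \eqref{eq:legint} is harmonic in $x$ for every $\zeta\in S^{p-1}_\eta$, and since everything in sight is polynomial I may differentiate under the integral sign to conclude $\Delta_p L_n=0$.

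With $L_n$ now established as a harmonic homogeneous polynomial of degree $n$ obeying (i) and (ii), Theorem~\ref{thm:leg} finishes the argument, and the final clause of that theorem even guarantees that its restriction to the sphere is a polynomial in $\langle\xi,\eta\rangle$, namely $P_n$.
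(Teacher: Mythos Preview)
Your proof is correct and follows essentially the same approach as the paper: you verify that $L_n$ is a harmonic homogeneous polynomial of degree $n$ (using the null-vector identity $\langle\eta+i\zeta,\eta+i\zeta\rangle=0$ for harmonicity), check properties (i) and (ii) of Theorem~\ref{thm:leg} directly and via a change of variables on the equator, and then appeal to the uniqueness statement of that theorem. The only difference is the order of presentation; the substance is identical.
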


\begin{proof}
  Clearly, $L_n(x)$ is a polynomial in the components of $x$.
  By the binomial theorem, this polynomial is homogeneous of
  degree $n$. We can also see that $L_n(x)$ is harmonic. Indeed,
  applying the Laplace operator on \eqref{eq:legint} and switching
  the order of differentiation and integration, the integrand
  becomes
   \[
    \sum_{j=1}^p {\partial^2 \over \partial x_j^2} \left(\langle x,\eta \rangle + i \langle x, \zeta \rangle \right)^n,
  \]
 or
  \[
    \sum_{j=1}^p {\partial \over \partial x_j} \left[ n \left(\langle x,
    \eta \rangle + i \langle x, \zeta \rangle \right)^{n-1}
    (\eta_j+i\zeta_j) \right],
  \]
  or
  \[
     n(n-1) \left(\langle x,
    \eta \rangle + i \langle x, \zeta \rangle \right)^{n-2}
    \sum_{j=1}^p (\eta_j+i\zeta_j)^2.
  \]
  But
  \[
    \sum_{j=1}^p(\eta_j+i\zeta_j)^2 = \langle \eta, \eta \rangle -
    \langle \zeta, \zeta \rangle + 2i\langle \eta, \zeta \rangle = 0 ,
  \]
  since $\{ \eta, \zeta \}$ is an orthonormal set. Also, notice
  \begin{align*}
    L_n(\eta) &= {1 \over \Omega_{p-2}} \underset{ \zeta\in  S^{p-1}_\eta}{\int} \left[ \langle \eta, \eta \rangle
                          + i \langle \eta, \zeta \rangle\right]^n \, d\Omega_{p-2}  \\
                     &= {1 \over \Omega_{p-2}} \underset{ \zeta\in  S^{p-1}_\eta}{\int}  d\Omega_{p-2}     = 1.
  \end{align*}
  Now, let $R$ be any coordinate rotation leaving $\eta$
  invariant, i.e., let $R$ be any rotation about the $x_1$-axis.
  The integrand of $L_n(Rx)$ is then
  \[
    \left[ \langle Rx, \eta \rangle + i \langle Rx, \zeta \rangle \right]^n =
    \left[ \langle x, \eta \rangle + i \langle x, R^t\zeta \rangle \right]^n,
  \]
  which can be reset to
  \[
    \left[ \langle x, \eta \rangle + i \langle x, \zeta \rangle \right]^n
  \]
  by a change of variables $\zeta\to\zeta'=R\zeta$
  and thus $L_n(x)$ is invariant under all such coordinate rotations.

  We have shown
  that $L_n(x)$ has all the properties described in Theorem
  \ref{thm:leg}, so that, when restricted to the sphere, it
  becomes the $n$-th Legendre polynomial. \qed
\end{proof}

Now we can prove the following integral representation for $P_n(t)$.

\begin{theorem}
  \[P_n(t) = {\Omega_{p-3} \over \Omega_{p-2}} \int_{-1}^1 \left(t +
  i s \sqrt{1-t^2} \right)^n (1-s^2)^{(p-4)/2}\,ds.\]
\end{theorem}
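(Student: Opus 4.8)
The plan is to start from the integral representation \eqref{eq:legint} established in the preceding lemma and to collapse the spherical integral over the equator $S^{p-1}_\eta$ into a one-dimensional integral by a second application of the reduction formula of Lemma \ref{lem:red}, this time carried out one dimension lower.

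First I would restrict \eqref{eq:legint} to a unit vector $\xi \in S^{p-1}$ and set $t = \langle \xi, \eta \rangle$, so that the preceding lemma gives
\[
    P_n(t) = L_n(\xi) = {1 \over \Omega_{p-2}} \underset{\zeta \in S^{p-1}_\eta}{\int} \left( \langle \xi, \eta \rangle + i \langle \xi, \zeta \rangle \right)^n \, d\Omega_{p-2}.
\]
Next I would decompose $\xi = t\eta + \sqrt{1-t^2}\,\nu$, where $\nu$ is a unit vector orthogonal to $\eta$, exactly as in the proof of Theorem \ref{thm:leg}. Since every $\zeta \in S^{p-1}_\eta$ satisfies $\langle \eta, \zeta \rangle = 0$, this yields $\langle \xi, \zeta \rangle = \sqrt{1-t^2}\,\langle \nu, \zeta \rangle$, so the integrand becomes $\left( t + i\sqrt{1-t^2}\,\langle \nu, \zeta \rangle \right)^n$, depending on $\zeta$ only through the single scalar $s = \langle \nu, \zeta \rangle$.

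The key step is to recognize that $S^{p-1}_\eta$ is a genuine $(p-2)$-sphere lying in the hyperplane $\{\eta\}^\bot \cong \mathbb{R}^{p-1}$, and that $\nu$ is a fixed unit vector inside this hyperplane. I can therefore invoke Lemma \ref{lem:red} with $p$ replaced by $p-1$, taking the distinguished axis vector to be $\nu$ and the function to be $f(s) = \left( t + i\sqrt{1-t^2}\, s \right)^n$ with $t$ held fixed. This rewrites the integral over the $(p-2)$-sphere as
\[
    \Omega_{p-3} \int_{-1}^1 \left( t + i s\sqrt{1-t^2} \right)^n (1-s^2)^{(p-4)/2}\, ds,
\]
since under the substitution $p \mapsto p-1$ the weight exponent becomes $((p-1)-3)/2 = (p-4)/2$ and the constant prefactor becomes $\Omega_{(p-1)-2} = \Omega_{p-3}$. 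Dividing by $\Omega_{p-2}$ then produces exactly the asserted formula.

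The main obstacle is purely a matter of bookkeeping: correctly shifting the dimension index in Lemma \ref{lem:red} from $p$ to $p-1$, and verifying that $\nu$ legitimately plays the role of the distinguished axis vector — that is, checking that $\nu \in \{\eta\}^\bot$ so that the pair $(\nu,\,S^{p-1}_\eta)$ realizes the same geometric configuration (a fixed unit vector together with a sphere of one lower dimension living in the orthogonal hyperplane) demanded by the lemma. Once that identification is in place, no further computation is required.
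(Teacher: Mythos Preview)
Your proposal is correct and follows essentially the same approach as the paper: start from the integral representation of the preceding lemma, decompose $\xi = t\eta + \sqrt{1-t^2}\,\nu$ with $\langle\nu,\eta\rangle=0$, and then apply Lemma~\ref{lem:red} with $p$ replaced by $p-1$ to collapse the integral over $S^{p-1}_\eta$ to a one-dimensional integral in $s=\langle\nu,\zeta\rangle$. Your discussion of the bookkeeping (the dimension shift and why $\nu$ serves as the distinguished axis in $\{\eta\}^\bot$) is exactly the point the paper glosses over with ``replacing $p$ by $p-1$.''
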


\begin{proof}
  From Lemma \ref{eq:legint}, we have
  \[
    P_n(\langle \xi, \eta \rangle) = {1 \over \Omega_{p-2}}
    \underset{ \zeta\in S^{p-1}_\eta}{\int} \left( \langle
    \xi, \eta \rangle + i \langle \xi, \zeta \rangle
    \right)^n\,d\Omega_{p-2}.
  \]
  Choose a constant $t$ and unit vector $\nu$ such that $\xi = t \eta + \sqrt{1-t^2} \: \nu$
  and $\langle \nu, \eta \rangle = 0$. Then, the above equation
  becomes, using Lemma \ref{lem:red} and replacing $p$ by $p-1$,
  \begin{align*}
    P_n(t) =& \: {1 \over \Omega_{p-2}} \underset{ \zeta\in S^{p-1}_\eta}{\int} \left(t + i\sqrt{1-t^2} \langle \nu, \zeta
    \rangle \right)^n \, d\Omega_{p-2}\\
    =&\: {\Omega_{p-3} \over \Omega_{p-2}} \int_{-1}^1
    \left( t+is\sqrt{1-t^2} \right)^n(1-s^2)^{(p-4)/2}\,ds,
  \end{align*}
  as sought.
\qed
\end{proof}

\section{Boundary Value Problems}

We conclude this discussion with an application of the ideas we
have developed to boundary value problems, where they display most
of their physical importance.

We know from Proposition \ref{fact:Hilb} that if a set of functions in a
Hilbert space is closed, then it is complete. In the following
theorem, we will see that a maximal linearly independent set of
spherical harmonics of all degrees is closed and thus complete.
This result allows us to develop expansions of functions as linear
combinations of spherical harmonics, which will be useful in
application to boundary value problems. In what follows, let
\[
    S=\{Y_{n,j}: n \in \mathbb{N}_0, \: 1 \leq j \leq N(p,n)\},
\]
be a maximal set of orthogonal spherical harmonics,
and let $\sum_{n,j}$ denote the sum $\sum_{n=0}^\infty
\sum_{j=1}^{N(p,n)}$.

\begin{theorem}
  Let the function $f: S^{p-1} \rightarrow \mathbb{R}$ be
  continuous. If $f$ is orthogonal to the set $S$, i.e., if
  \[
    \underset{\xi \in S^{p-1}}{\int} f(\xi)Y_{n,j}(\xi) \,
    d\Omega_{p-1} = 0, \text{ for all }  n,j
  \]
  then $f$ is the zero function, i.e.,
  \[
    f(\xi) = 0, \text{ for all } \xi \in S^{p-1}.
  \]
\end{theorem}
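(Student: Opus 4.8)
The plan is to leverage the Addition Theorem to convert orthogonality against every $Y_{n,j}$ into orthogonality against a family of peaking functions concentrated at an arbitrary point of the sphere, from which $f(\eta_0)=0$ can then be read off pointwise.

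First I would fix a unit vector $\eta$ and invoke Theorem \ref{thm:add} to write
\[
    P_n(\langle \xi, \eta \rangle) = \frac{\Omega_{p-1}}{N(p,n)} \sum_{j=1}^{N(p,n)} Y_{n,j}(\eta)\,Y_{n,j}(\xi).
\]
For fixed $\eta$ the coefficients $\frac{\Omega_{p-1}}{N(p,n)}Y_{n,j}(\eta)$ are constants, so the hypothesis $\int_{S^{p-1}} f\,Y_{n,j}\,d\Omega_{p-1}=0$ immediately yields $\int_{S^{p-1}} f(\xi)\,P_n(\langle\xi,\eta\rangle)\,d\Omega_{p-1}=0$ for every $n$ and every unit $\eta$. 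Since $\deg P_n = n$, the polynomials $P_0,\dots,P_N$ form a basis of the space of polynomials of degree at most $N$; by linearity it follows that $\int_{S^{p-1}} f(\xi)\,q(\langle\xi,\eta\rangle)\,d\Omega_{p-1}=0$ for every polynomial $q$ and every unit $\eta$. No appeal to the Weierstrass theorem is needed, since the approximating functions below will themselves be polynomials.

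Next I would fix an arbitrary $\eta_0 \in S^{p-1}$ and set $g_k(t)=\left(\frac{1+t}{2}\right)^{k}$, a polynomial satisfying $0\le g_k\le 1$ on $[-1,1]$ with $g_k(1)=1$. Using Lemma \ref{lem:red},
\[
    Z_k \overset{\text{def}}{=} \int_{S^{p-1}} g_k(\langle\xi,\eta_0\rangle)\,d\Omega_{p-1} = \Omega_{p-2}\int_{-1}^{1} g_k(t)\,(1-t^2)^{(p-3)/2}\,dt > 0,
\]
and I would define the normalized kernels $K_k(\xi) = g_k(\langle\xi,\eta_0\rangle)/Z_k$, which are nonnegative and integrate to $1$ over the sphere. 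The crucial claim is that $\{K_k\}$ is an approximate identity at $\eta_0$: for each $\delta>0$ the mass $\int_{\langle\xi,\eta_0\rangle\le 1-\delta} K_k\,d\Omega_{p-1}$ tends to $0$. Reducing again by Lemma \ref{lem:red}, the tail of $g_k$ over $t\le 1-\delta$ is at most a constant times $(1-\delta/2)^{k}$, while the substitution $t=1-u/k$ shows $Z_k/\Omega_{p-2}$ is of order $k^{-(p-1)/2}$; the exponentially small tail divided by this polynomially small total vanishes as $k\to\infty$.

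Finally, given $\varepsilon>0$, continuity of $f$ provides $\delta>0$ with $|f(\xi)-f(\eta_0)|<\varepsilon$ whenever $\langle\xi,\eta_0\rangle\ge 1-\delta$. Splitting $\int_{S^{p-1}} K_k\,(f-f(\eta_0))\,d\Omega_{p-1}$ over this cap and its complement, using $\int_{S^{p-1}} K_k\,d\Omega_{p-1}=1$ and the bound $M=\max_{S^{p-1}}|f|$, the cap contributes at most $\varepsilon$ and the complement at most $2M$ times the vanishing tail mass; hence $\int_{S^{p-1}} K_k(\xi)f(\xi)\,d\Omega_{p-1}\to f(\eta_0)$. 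But this integral equals $Z_k^{-1}\int_{S^{p-1}} f(\xi)\,g_k(\langle\xi,\eta_0\rangle)\,d\Omega_{p-1}=0$ by the first step, so $f(\eta_0)=0$; since $\eta_0$ was arbitrary, $f\equiv 0$. The main obstacle is precisely the concentration estimate of the third paragraph: because the spherical measure carries the weight $(1-t^2)^{(p-3)/2}$ that degenerates at the pole $t=1$, one must verify quantitatively, via the one-dimensional reduction and a Laplace-type scaling, that the kernel's mass still localizes at $\eta_0$ despite this suppression.
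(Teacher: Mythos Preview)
Your proof is correct and follows a genuinely different path from the paper's. Both arguments hinge on the same intermediate fact---that the hypothesis forces $\int_{S^{p-1}} f(\xi)\,q(\langle\xi,\eta\rangle)\,d\Omega_{p-1}=0$ for every polynomial $q$ and every unit vector $\eta$---but they exploit it differently. The paper argues by contradiction: assuming $f(\eta)>0$ on some cap, it builds a continuous bump $\psi$ supported there, obtains $\int f\,\psi>0$, and then invokes the Weierstrass approximation theorem (Theorem~\ref{fact:Weie}) to replace $\psi$ by a polynomial $p$, which it expands in Legendre polynomials to reach a contradiction. You instead work constructively with the explicit polynomial peaking family $g_k(t)=((1+t)/2)^k$, establish via Lemma~\ref{lem:red} and a Laplace-type scaling that the normalized kernels concentrate at $\eta_0$ despite the weight $(1-t^2)^{(p-3)/2}$, and read off $f(\eta_0)=0$ directly. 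Your route sidesteps Weierstrass entirely at the price of the quantitative concentration estimate; the paper's route is softer but leans on the approximation machinery developed in Chapter~\ref{ch:pol}.
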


The requirement that $f$ be continuous is actually too strict, and
the theorem really applies to all square-integrable functions $f$,
i.e., all $f$ such that
\[
    \underset{S^{p-1}}{\int} f(\xi)^2 \, d\Omega_{p-1} < \infty.
\]
However, we will only prove the weaker version of this theorem.

\begin{proof}
  We will prove it by contradiction. Suppose that $f$ satisfies the hypotheses of the above theorem,
  i.e., is continuous and orthogonal to $S$, but is not the zero
  function. Then there exists some $\eta \in S^{p-1}$ such that
  $f(\eta) \neq 0$. We can assume that $f(\eta)>0$, for if
  $f(\eta)$ is negative we could consider $-f$ instead. By the
  continuity of $f$, there is some neighborhood around $\eta$ on
  the sphere where $f$ is positive. That is, there exists a
  constant $s$ such that $f(\xi) > 0$ whenever $s \leq
  \langle \xi, \eta \rangle \leq 1$. Define the function
  \[
    \psi(t) = \left\{
        \begin{array}{ll}
            1-{(1-t)^2 \over (1-s)^2} & \text{if } s \leq t \leq 1,\\
            0                         & \text{if } -1 \leq t \leq
            s.
        \end{array}
    \right.
  \]
  For $s \leq \langle \xi, \eta \rangle \leq 1$, the product
  $f(\xi)\psi(\langle \xi, \eta \rangle)$ is positive, and it vanishes for all other $\xi$. Thus,
  \begin{equation}
    \underset{\xi \in S^{p-1}}{\int}f(\xi)\psi(\langle \xi, \eta
    \rangle) \, d\Omega_{p-1} c > 0.
    \label{eq:nonvan}
  \end{equation}
  For simplicity, we will indicate the above integral by $c$.
  By the Weierstrass approximation theorem (Proposition \ref{fact:Weie}), we can
  find a polynomial $p(t)$ for any given $\epsilon > 0$ such that
  \[
    | \psi(t) - p(t) | \leq \epsilon, \text{ for all } t \in [-1,1].
  \]
  For any such $\epsilon$ and $p(t)$, 
  \begin{align*}
    \underset{\xi \in S^{p-1}}{\int}  \hspace{-3mm} f(\xi) \left[
    \psi(\langle \xi, \eta \rangle) - p(\langle \xi, \eta
    \rangle)\right]\,d\Omega_{p-1}  &\le
    \Bigg | \underset{\xi \in S^{p-1}}{\int}  f(\xi) \left[
    \psi(\langle \xi, \eta \rangle) - p(\langle \xi, \eta
    \rangle)\right]\,d\Omega_{p-1} \Bigg |\\
    & \leq \underset{\xi \in S^{p-1}}{\int}  \big| f(\xi)\big| \, \big|\psi(\langle \xi, \eta \rangle) - p(\langle \xi, \eta
    \rangle)\big|\,d\Omega_{p-1} \\
    & \leq \epsilon \,  \underset{\xi \in S^{p-1}}{\int}  \big| f(\xi)\big| \,  d\Omega_{p-1}.
  \end{align*}
  Since $f(\xi)$ is continuous and $\xi\in S^{p-1}$, there exists an $M$ such that  $M \geq |f(\xi)|$ for any $\xi \in S^{p-1}$.
  This implies
  \[
    \underset{\xi \in S^{p-1}}{\int} f(\xi) p(\langle \xi, \eta
    \rangle) \, d\Omega_{p-1} \geq c - M \epsilon \, \Omega_{p-1} .
  \]
  And since we can choose $\epsilon$ arbitrarily small,
  \begin{equation}
    \underset{\xi \in S^{p-1}}{\int} f(\xi) p(\langle \xi, \eta
    \rangle) \, d\Omega_{p-1} >0.
    \label{eq:contr}
  \end{equation}
  For the remainder of the proof, we fix $\epsilon$ and the
  corresponding $p(t)$ for which this expression is true.

  Now, let $m$ denote the degree of $p(t)$. We can write the
  polynomial $p(t)$ as a linear combination of the first $m$
  Legendre polynomials since each $P_n(t)$ is of degree $n$; i.e., we can find $c_k$ such that
  \[
    p(t) = \sum_{k=0}^m c_kP_k(t).
  \]
  We can thus rewrite the integral in \eqref{eq:contr} as
  \[
    \underset{\xi \in S^{p-1}}{\int} f(\xi) \sum_{k=0}^m c_kP_k(\langle \xi,\eta \rangle) \,
    d\Omega_{p-1}.
  \]
  But since the Legendre polynomials are just a special collection of
  spherical harmonics in $\xi$, this integral must vanish by
  hypothesis, contradicting the assertion in \eqref{eq:contr}.
  Therefore, our initial assumption that $f$ is not the zero
  function must be false. \qed
\end{proof}

So for any reasonable function $f$ defined on the sphere, we can
write
\begin{equation}
    f(\xi) = \sum_{n,j} c_{n,j}Y_{n,j}(\xi).
    \label{eq:fexp}
\end{equation}
To find $c_{n',j'}$, multiply both sides of this equation by
$Y_{n'j'}$ and integrate over the sphere. Using the orthonormality
of the set $S$, we find
\begin{equation}
    c_{n',j'} = \underset{S^{p-1}}{\int}
    f(\xi)Y_{n',j'}(\xi)\,d\Omega_{p-1}.
    \label{eq:lastcoeff}
\end{equation}
This expansion will be used in the following demonstration.

\begin{problem}
\label{problem:1}
  Consider the following boundary-value problem. Find $V$ in the closed unit ball $\bar{B}^p$, such
  that
  \begin{equation}
    \Delta_p V = 0, \quad \text{and} \quad V=f(\xi), \text{ for all } \xi \in S^{p-1}.
    \label{eq:bvp}
  \end{equation}
\end{problem}

\begin{solution}[Solution]
    We know that harmonic homogeneous polynomials are
    solutions to the Laplace equation, as well as any linear combination of
    them. We have also seen in \eqref{eq:sepvar} that we can write each of these polynomials as a power of the radius
    multiplied by a spherical harmonic. Thus, we can construct the
    solution  to \eqref{eq:bvp} as a linear combination of
    $r^nY_{n,j}(\xi)$ terms. To satisfy the boundary condition, we can use
    the coefficients in \eqref{eq:lastcoeff} to find
    \begin{equation}
      V = \sum_{n,j} r^nc_{n,j}Y_{n,j}(\xi) = \sum_{n,j} r^n Y_{n,j}(\xi) \underset{S^{p-1}}{\int}
    f(\eta)Y_{n,j}(\eta)\,d\Omega_{p-1},
      \label{eq:sol1}
    \end{equation}
thus solving the problem with the solution being in the form of a series.
\qed
\end{solution}

In the next subsection, we  will learn to solve this problem by another method.
 Since the solution of this boundary-value problem is unique (as we know from the theory of differential equations), we can
 equate the answers. And, amazingly,  this procedure will give us a generating function for the  Legendre polynomials $P_n(t)$.

\subsection*{Green's Functions}

Given a differential equation
$$
        D_x(f(x)) = 0 ~,
$$
where $D_x$ is a differential operator acting on the unknown function $f(x)$, the corresponding  Green's function $G$ is defined
by the equation
$$
        D_x(G(x)) = \delta(x-x_0) ~,
$$
 where the function $\delta$ appearing in the right-hand side is  the Dirac delta function defined by
    \[
        \delta(x-x_0)=0, \text{ for all } x \neq x_0,
    \]
    and
    \[
     \underset{B^p_\epsilon(x_0)}{\int} \delta(x-x_0)\, d^px    = 1, \text{ for all } \epsilon>0.
    \]

 Let's assume that the given differential operator is the Laplacian in $p$ dimensions, that is, we seek the Green's function which satisfies
    \begin{equation}
        \Delta_p \tilde G = \delta(x-x_0) .
        \label{eq:green1}
    \end{equation}
    If we think of $\tilde G$ as   electric potential, then \eqref{eq:green1} describes the electric potential caused by a point charge at $x_0 \in
    \mathbb{R}^p$.

    Since the Laplacian is  invariant under translations, we see that $\tilde{G}$ can only depend on the
    distance from $x_0$, i.e., on $\rho = |x-x_0|$. When $\rho \neq 0$, $\tilde{G}$ must satisfy Laplace's
    equation:
    \begin{align*}
      0 &= \Delta_p \tilde{G}(\rho) = \sum_{i=1}^p{\partial^2\over\partial x_i^2}  \tilde{G}(\rho) \\
       &= \sum_{i=1}^p{\partial\over\partial x_i} \left( {\partial\tilde{G}(\rho)\over\partial\rho}\,{\partial\rho\over\partial x_i}   \right)\\
       &= \tilde{G}''(\rho) \sum_{i=1}^p \left( {\partial \rho \over \partial x_i} \right)^2 + \tilde{G}'(\rho) \sum_{i=1}^p
            {\partial^2 \rho \over \partial x_i^2}\\
      &= \tilde{G}''(\rho) + {p-1 \over \rho} \tilde{G}'(\rho) .
    \end{align*}
    We can easily solve this differential equation by separation of variables to find
    \[
          G(\rho) = a \rho + b , \quad \text{if}~p=1,
     \]
     and
    \[
        \tilde{G}'(\rho) =  {\text{const.} \over \rho^{p-1}} ,
    \]
    if $p\ge2$. This, in turn, implies
     \[
        \tilde{G}(\rho) =  a \, \ln\rho ,  \quad\text{if}~p=2,
    \]
    and
    \[
              \tilde{G}(\rho) = {a \over \rho^{\, p-2}}, \quad \text{if}~p\ge3.
    \]

    We will focus on the last case but the reader should explore the cases $p=1,2$ on his or her own to get a better understanding.
    To find the undetermined constant,  we integrate the defining equation over a ball of radius $\epsilon$ centered at the point $x_0$:
     \begin{align*}
         \underset{B^p_\epsilon(x_0)}{\int} \Delta_p \tilde{G} \,  d^px   =
         \underset{B^p_\epsilon(x_0)}{\int}  \delta(x-x_0) \,  d^px .
    \end{align*}
 By the properties of the Dirac delta function, the right-hand side is 1. The left-hand side can be rewritten by the use of
  the divergence theorem:
    \begin{align*}
         \underset{B^p_\epsilon(x_0)}{\int} \Delta_p \tilde{G} \, d^px
               & = \underset{S^{p-1}_\epsilon(x_0)}{\int} \nabla_p \tilde{G} \cdot \xi \, (\epsilon^{p-1}\, d\Omega_{p-1}) \\
               &= \epsilon^{p-1} \, \underset{S^{p-1}_\epsilon(x_0)}{\int} {d \tilde{G}  \over d\rho } \, d\Omega_{p-1}\\
              &=  \epsilon^{p-1}\, \underset{S^{p-1}_\epsilon(x_0)}{\int} {a\, (2-p) \over \epsilon^{p-1}} \, d\Omega_{p-1} \\
             & =  a \, (2-p) \, \Omega_{p-1}.
    \end{align*}
 Hence
 \[
      a =  {1\over (2-p)\,\Omega_{p-1} } .
 \]

     Of course, differential equations come with boundary conditions. So, let's modify the previous problem as follows.
     Let's  seek the Green's function which satisfies the same equation
    \begin{equation}
        \Delta_p G = \delta(x-x_0)  ,
        \label{eq:green2}
    \end{equation}
 for all $x\in B_p(0)$ and subjected to the boundary condition
  \begin{equation}
       G(\xi)=0, \text{ for all } \xi \in S^{p-1} .
  \label{eq:green3}
  \end{equation}
Equation \eqref{eq:green2} now describes the   electric potential caused by a point charge at $x_0$ and an ideal conducting sphere with center at
the origin.

To construct $G$,  we write
    \[
        G(x; x_0) = \tilde{G}(\rho) + g = {1 \over   (2-p) \Omega_{p-1} \rho^{\, p-2}} + g,
    \]
    and require that $g$ is harmonic in $B_p$
    \[
        \Delta_p \, g = 0
    \]
    and cancel $\tilde G$ on the boundary of $B_p$:
    \[
        g(\xi) = -\tilde{G}(\xi) = {1 \over  (p-2) \Omega_{p-1} \rho^{\, p-2}} \text{ for all } \xi \in  S^{p-1}.
    \]
    In fact, the functional expression of $g$ is
    identical to that of $\tilde G$.  However, there are two parameters that have to be fixed: the location of the singular point
    representing a point charge and the strength of the charge.
    The location of the singular point of $g$ cannot be inside $B_p(0)$.
    We will place the charge at the symmetric point $x_0'$ to $x_0$ with respect to the sphere, i.e., the
    point $x_0'$ that lies on the line passing through the origin
    and $x_0$ with $|x_0'| = 1/|x_0|$. We see then that
    \[
        g(\rho') \propto {1 \over   (2-p) \Omega_{p-1} \rho'^{\, p-2}},
    \]
    where $\rho' = |x-x_0'|$. We must choose the charge to be of the correct strength so
    that $G$ vanishes on the sphere. We easily see that the correct choice of $g$ is
    \[
        g(\rho') = - \,{1 \over
        (2-p) \Omega_{p-1} (|x_0| \rho')^{\, p-2}},
    \]
    so that
    \begin{equation}
        G(x; x_0) = {1 \over
        (2-p) \Omega_{p-1}} \left( {1 \over \rho^{\, p-2}} - {1 \over
        (|x_0| \rho')^{\, p-2}}\right).
        \label{eq:G}
    \end{equation}
    Indeed, \eqref{eq:G} satisfies Laplace's equation.
    Using
    the law of cosines and letting $\theta$ be the angle between
    the ray from the origin to $x$ and the ray from the origin to
    $x_0$,
    \begin{align*}
        \rho =& \sqrt{|x|^2 + |x_0|^2 -2|x|\,|x_0| \cos{\theta}},\\
        \rho' =& \sqrt{|x|^2 + |x_0'|^2 -2|x|\,|x_0'| \cos{\theta}}
        = \sqrt{|x|^2 + {1 \over |x_0|^2} -2{|x|\over |x_0|}
        \cos{\theta}},
    \end{align*}
    we see that \eqref{eq:G} vanishes on the unit sphere, i.e., when $|x|=1$. The method of constructing G as described above
    is known as  \emph{the method of images}. Physicists use it routinely without paying attention to the mathematical details!
    The reason is that if a solution is found for a boundary problem, it must be unique.

Let's now return to the problem stated on page \pageref{problem:1} and present an alternative solution using the results
on the Green's function for the Laplace equation.

\begin{solution}[Alternative Solution]
    Green's theorem (as shown in the footnote of page \pageref{GreensTheorem}) for the function $G$ and $V$,
    \[
        \underset{B^p}{\int} (V \Delta_p G - G \Delta_p V) \, d^px
        = \underset{S^{p-1}}{\int}\left(V {\partial G \over
        \partial \xi} - G {\partial V \over \partial \xi}
        \right)\,d\Omega_{p-1},
    \]
   reduces to
    \[
        \underset{B^p}{\int} V \delta (x-x_0) \, d^px
        = \underset{S^{p-1}}{\int}   V {\partial G \over \partial \xi} \,d\Omega_{p-1},
    \]
    since $V$ is harmonic and $G$ obeys \eqref{eq:green2} and \eqref{eq:green3}. Since
    the left-hand side of the above equation becomes
    \[
        \underset{B^p}{\int} V \delta (x-x_0) \, d^px =
        V(x_0) \underset{B^p}{\int}  \delta (x-x_0) = V(x_0),
    \]
    and since
    \[
        {\partial G \over \partial \xi} = {\partial G \over\partial |x|}\Big|_{|x| = 1}
         = {1-|x_0|^2 \over \Omega_{p-1} (1 + |x_0|^2 -2|x_0|\cos{\theta})^{p/2}},
    \]
    we can write the solution,
    \begin{equation}
      V(x_0) = {1 \over \Omega_{p-1}} \underset{ \xi \in
      S^{p-1}}{\int} f(\xi)\,{1-|x_0|^2 \over
        (1 + |x_0|^2 -2|x_0|\cos{\theta})^{p/2}}\,d\Omega_{p-1}.
      \label{eq:sol2}
    \end{equation}
Obviously, this gives the potential as an integral representation.
\qed
\end{solution}

Equating the two solutions, we can arrive at the following result.

\begin{theorem}
  In $\mathbb R^p$ we have
  \[
    \sum_{n=0}^\infty r^n N(p,n) P_n(t) = {1-r^2 \over (1  -2rt+r^2)^{p/2}}.
  \]
\end{theorem}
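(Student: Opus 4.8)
The plan is to equate the two solutions of the Dirichlet problem on the unit ball obtained above, namely the series solution \eqref{eq:sol1} and the Poisson-kernel solution \eqref{eq:sol2}, and then read off the identity from the equality of their kernels. Write $x_0 = r\zeta$ with $\zeta \in S^{p-1}$ and $r = |x_0| < 1$, so that the angle $\theta$ in \eqref{eq:sol2} satisfies $\cos\theta = \langle \zeta, \xi\rangle$.

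First I would transform the series solution into the same shape as the Poisson solution. Starting from
\[
    V(r\zeta) = \sum_{n,j} r^n Y_{n,j}(\zeta) \underset{S^{p-1}}{\int} f(\eta)Y_{n,j}(\eta)\,d\Omega_{p-1},
\]
I interchange the summation and the integral and apply the Addition Theorem (Theorem \ref{thm:add}) to the inner sum, $\sum_j Y_{n,j}(\zeta)Y_{n,j}(\eta) = \tfrac{N(p,n)}{\Omega_{p-1}}P_n(\langle\zeta,\eta\rangle)$, obtaining
\[
    V(r\zeta) = {1 \over \Omega_{p-1}} \underset{S^{p-1}}{\int} f(\eta)\left[\sum_{n=0}^\infty r^n N(p,n)P_n(\langle\zeta,\eta\rangle)\right]d\Omega_{p-1}.
\]
To justify the interchange I would use the earlier bound $|P_n(t)| \leq 1$ together with the fact that $N(p,n)$ grows only polynomially in $n$ (Theorem \ref{thm:N}); hence for fixed $r<1$ the bracketed series converges absolutely and uniformly in the angular variable. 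Call its sum $K(r,\langle\zeta,\eta\rangle)$, a continuous function on the sphere.

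Now both expressions for $V(r\zeta)$ are integrals of $f$ against a kernel depending only on $\langle\zeta,\xi\rangle$. Since the Dirichlet problem has a unique solution, these two integrals agree for every continuous boundary datum $f$. Setting
\[
    g(\xi) = K(r,\langle\zeta,\xi\rangle) - {1-r^2 \over (1-2r\langle\zeta,\xi\rangle+r^2)^{p/2}},
\]
which is continuous on $S^{p-1}$, I would conclude $\int_{S^{p-1}} f(\xi)g(\xi)\,d\Omega_{p-1} = 0$ for all continuous $f$; taking $f = g$ forces $\int_{S^{p-1}} g^2\,d\Omega_{p-1} = 0$ and hence $g \equiv 0$. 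Since $\langle\zeta,\xi\rangle$ ranges over all of $[-1,1]$ as $\zeta,\xi$ vary over unit vectors, writing $t = \langle\zeta,\xi\rangle$ yields the stated identity for every $t \in [-1,1]$.

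The main obstacle is the passage from \emph{``the two integrals agree for all $f$''} to \emph{``the kernels agree.''} The clean route is the choice $f = g$ just described, which is in essence the closedness argument of the preceding theorem. The only technical points needing verification are the uniform convergence of the series kernel for $r<1$, which legitimizes both the term-by-term integration and the continuity of $g$, and the appeal to uniqueness for the boundary-value problem.
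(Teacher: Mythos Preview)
Your proof is correct and follows essentially the same route as the paper: transform the series solution via the Addition Theorem into an integral against the kernel $\sum_n r^n N(p,n)P_n(\langle\zeta,\eta\rangle)$, then equate with the Poisson-kernel solution \eqref{eq:sol2} and use the arbitrariness of $f$ to identify the kernels. The paper is terser at the last step, simply asserting ``since the function $f$ is arbitrary,'' whereas you spell out the $f=g$ argument and the uniform-convergence justification for swapping sum and integral; these additions are welcome but do not change the underlying strategy.
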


\begin{proof}
  Let $x_0 = |x_0| \eta$. Starting from equation \eqref{eq:sol1},
  \begin{align*}
    V(x_0) &= \sum_{n,j} |x_0|^n Y_{n,j}(\eta) \underset{\xi \in S^{p-1}}{\int}   f(\xi)Y_{n,j}(\xi)\,d\Omega_{p-1} \\
               &= \sum_{n=0}^\infty |x_0|^n \underset{\xi \in S^{p-1}}{\int}  f(\xi)) \sum_{j=1}^{N(p,n)} Y_{n,j}(\xi)Y_{n,j}(\eta)
                      \,d\Omega_{p-1},
  \end{align*}
  we rewrite it using Theorem \ref{thm:add},
  \begin{align*}
    V(x_0) &= \sum_{n=0}^\infty |x_0|^n \underset{\xi \in S^{p-1}}{\int}
                      f(\xi) {N(p,n) \over \Omega_{p-1}} P_n(\langle \xi, \eta \rangle) \, d\Omega_{p-1}\\
               &= {1 \over \Omega_{p-1}} \underset{\xi \in S^{p-1}}{\int} f(\xi) \sum_{n=0}^\infty
                    |x_0|^n N(p,n) P_n(\cos{\theta})\,d\Omega_{p-1}.
  \end{align*}
  Since the function $f$ is arbitrary, we can compare the above
  equation with \eqref{eq:sol2} and set $r = |x_0|$ and $t = \cos{\theta}$ to complete the proof. \qed
\end{proof}

This concludes our development of spherical harmonics in $p$ dimensions. We have briefly considered
an application to boundary value problems; we will not delve further into applications. We have achieved our main goal
to study the theory of spherical harmonics and the corresponding Legendre polynomials in $\mathbb{R}^p$. We urge the
reader to seek out applications on his or her own. Perhaps search for instances in physics where spherical harmonics are used in $\mathbb{R}^3$
and try to generalize to $p$ dimensions. One could start with the multipole expansion of an
electrostatic field (see \cite{ED:Jackson},\cite{ED:Landau}) or the wave function of an electron in a hydrogenic atom (see
\cite{QM:Shankar},\cite{QM:Landau}).

 \CleanPage
%

\begin{thebibliography}{10}
\addcontentsline{toc}{chapter}{Bibliography}

\bibitem{Methods:Arfken}
G.~Arfken, H.~Weber, and F.~Harris.
\newblock {\em Mathematical Methods for Physicists, Fifth Edition}.
\newblock Academic Press, 2000.

\bibitem{Methods:ByronFuller}
F.~Byron and R.~Fuller.
\newblock {\em Mathematics of Classical and Quantum Physics}.
\newblock Dover Publications, 1992.

\bibitem{BVP:Folland}
G.~Folland.
\newblock {\em Fourier Analysis and its Applications}.
\newblock American Mathematical Society, 2009.

\bibitem{LinAlg:Friedberg}
S.~Friedberg, A.~Insel, and L.~Spence.
\newblock {\em Linear Algebra}.
\newblock Prentice Hall, 2002.

\bibitem{SpecFun:Hochstadt}
H.~Hochstadt.
\newblock {\em The Functions of Mathematical Physics}.
\newblock Dover Publications, 1987.

\bibitem{ED:Jackson}
J.~Jackson.
\newblock {\em Classical Electrodynamics, Third Edition}.
\newblock Wiley, 1998.

\bibitem{ED:Landau}
L.~Landau and E.~Lifshitz.
\newblock {\em The Classical Theory of Fields, Fourth Edition}.
\newblock Butterworth-Heinemann, 1980.

\bibitem{QM:Landau}
L.~Landau and E.~Lifshitz.
\newblock {\em Quantum Mechanics: Non-Relativistic Theory, Third Edition}.
\newblock Butterworth-Heinemann, 1981.

\bibitem{Real:Royden}
H.~Royden and P.~Fitzpatrick.
\newblock {\em Real Analysis}.
\newblock Prentice Hall, 2010.

\bibitem{Anal:Rudin}
W.~Rudin.
\newblock {\em Principles of Mathematical Analysis}.
\newblock McGraw-Hill, 1976.

\bibitem{QM:Shankar}
R.~Shankar.
\newblock {\em Principles of Quantum Mechanics}.
\newblock Springer, 1994.

\bibitem{Calc:Stewart}
J.~Stewart.
\newblock {\em Calculus: Early Transcendentals}.
\newblock Brooks Cole, 2007.

\bibitem{PDE:Strauss}
W.~Strauss.
\newblock {\em Partial Differential Equations: An Introduction}.
\newblock Wiley, 2007.

\end{thebibliography}

%
\end{document}